\documentclass{amsart}
\usepackage{graphicx} % Required for inserting images
\usepackage{amsmath, amssymb, amsthm}
\usepackage{mathrsfs}
\usepackage{geometry}
\usepackage{float}
\usepackage{xcolor}
\usepackage{mathabx}
\geometry{margin=3cm}

% added by Peter
\DeclareMathOperator*{\argmin}{\arg\min}
\DeclareMathOperator*{\argmax}{\arg\max}
\DeclareMathOperator{\Span}{span}
\DeclareMathOperator{\diag}{diag}
\DeclareMathOperator{\cl}{cl}

\newcommand{\inn}[2]{{\langle #1,#2 \rangle}}

\title[DCNN and data approximation using the FrFT]{Deep convolutional neural networks and data approximation using the fractional Fourier transform}

\author{M. H. A. Biswas}
\address{Department of Mathematics, Indian Institute of Science, Bangalore, India}
\email{mdhasanalibiswas4@gmail.com}

\author{P. Massopust}
\address{Center of Mathematics, Technical University of Munich, Garching by Munich, Germany}
\email{massopust@ma.tum.de}

\author{R. Ramakrishnan}
\address{Department of Mathematics, Indian Institute of Technology Madras, Chennai, India}
\email{radharam@iitm.ac.in}

\begin{document}
\newtheorem{theorem}{Theorem}[section]
\newtheorem{lemma}[theorem]{Lemma}
\theoremstyle{definition}
\newtheorem{definition}[theorem]{Definition}
\newtheorem{example}[theorem]{Example}
\newtheorem{xca}[theorem]{Exercise}
\newtheorem{corollary}[theorem]{Corollary}
\newtheorem{propn}[theorem]{Proposition}
\newtheorem{remark}[theorem]{Remark}

\numberwithin{equation}{section}
\newcommand{\glk}{g_{\lambda_k}}
\newcommand{\mkt}{\mathcal{M}_{k}^\theta}
\newcommand{\pkt}{\mathrm{P}_{k}^\theta}
\newcommand{\R}{\mathbb{R}}
\newcommand{\tc}{\star_\theta}
\newcommand{\ltn}{L^2(\mathbb{R}^n)}
\newcommand{\T}{\mathcal{T}}
\newcommand{\tlt}{\mathcal{T}^{\ell, \theta}}
\newcommand{\mlt}{M^{\ell, \theta}_N}
\newcommand{\ch}{\mathcal{C}_\theta}
\newcommand{\nch}{\mathcal{C}_{-\theta}}
\newcommand{\fft}{\mathscr{F}_\theta}
\newcommand{\lil}{\lambda\in\Lambda}
\newcommand{\C}{\mathbb{C}}
\newcommand{\intn}{\int_{\R^n}}
\newcommand{\diff}{\mathrm{d}}
\newcommand{\A}{\mathcal{A}}
\newcommand{\G}{\mathcal{G}}
\newcommand{\s}{\mathcal{S}}
\newcommand{\F}{\mathscr{F}}
\newcommand{\fl}{\mathcal{F}}
\newcommand{\N}{\mathbb{N}}
\newcommand{\Z}{\mathbb{Z}}
\newcommand{\Hi}{\mathcal{H}}
\newcommand*{\bigchi}{\mbox{\Large$\chi$}}
\newcommand*{\htau}{\mbox{\huge$\tau$}}
\newcommand*{\fiber}{\htau_{\Hm}}
\def\Msp{{\boldsymbol M}}
\newcommand{\Hm}{\mathrm{H}}
\newcommand{\sinc}{\mathrm{sinc}}
\newcommand{\lng}{\left \langle}
\newcommand{\rng}{\right \rangle}
\newcommand{\lc}{\left\lbrace}
\newcommand{\rc}{\right\rbrace}
\newcommand{\lnm}{\left\|}
\newcommand{\rnm}{\right\|}
\newcommand{\lp}{\left(}
\newcommand{\rp}{\right)}
%    Absolute value notation
\newcommand{\abs}[1]{\lvert#1\rvert}

%    Blank box placeholder for figures (to avoid requiring any
%    particular graphics capabilities for printing this document).
\newcommand{\blankbox}[2]{%
  \parbox{\columnwidth}{\centering
%    Set fboxsep to 0 so that the actual size of the box will match the
%    given measurements more closely.
    \setlength{\fboxsep}{0pt}%
    \fbox{\raisebox{0pt}[#2]{\hspace{#1}}}%
  }%
}

\begin{abstract}
In the first part of this paper, we define a deep convolutional neural network connected with the fractional Fourier transform (FrFT) using the $\theta$-translation operator, the translation operator associated with the FrFT. Subsequently, we study $\theta$-translation invariance properties of this network. Unlike the classical case, these networks are not translation invariant.
\par In the second part, we study data approximation problems using the FrFT. More precisely, given a data set $\fl=\{f_1,\cdots, f_m\}\subset L^2(\R^n)$, we obtain $\Phi=\{\phi_1,\cdots,\phi_\ell\}$ such that
\[
V_\theta(\Phi)=\argmin\sum_{j=1}^m \|f_j-P_{V}f_j\|^2,
\] 
where the minimum is taken over all $\theta$-shift invariant spaces generated by at most $\ell$ elements. Moreover, we prove the existence of a space of bandlimited functions in the FrFT domain which is ``closest" to $\fl$ in the above sense.
\end{abstract}

\maketitle

\section{Introduction}
In the recent years, {\it neural networks} have been widely studied by mathematicians, engineers, and physicists because of their high potential of applicability to several real life applications such as pattern recognition and the classification of images. These goals are achieved by extracting various features from a given image. In this article, we shall discuss the notion of feature extraction using a deep convolutional neural network (DCNN) based on the fractional Fourier transform (FrFT).\\

The reasons for choosing the FrFT are manifold. The FrFT is a generalization of the standard Fourier Transform, which provides a continuous range of transformations between the time and frequency domains. Whilst the Fourier Transform maps a function from time to frequency domain (or vice versa), the FrFT allows for transformations that are intermediate between these two extremes. This flexibility is useful in signal processing tasks where partial transformations between domains are required. The FrFT can be particularly advantageous in time-frequency analysis, where signals are analyzed in both time and frequency simultaneously. By varying the fractional order, the FrFT provides a more nuanced representation of a signal's characteristics in both domains, capturing details that might be missed by the standard Fourier Transform. In signal filtering applications, the FrFT can be used to design filters that operate in a fractional domain. This can lead to more efficient or effective filtering processes, particularly for signals where the standard frequency domain representation is not optimal. One feature that is attractive for our approach concerns chirp signals. These have a frequency that varies with time and are more effectively represented using the FrFT. The FrFT can align more closely with the structure of chirp signals, leading to better signal analysis and processing outcomes compared to the standard FT. Furthermore, as the FrFT can transform signals to an intermediate domain that are more aligned with the structure of the signal, better noise reduction techniques can be employed. The FrFT also offers more flexibility in selecting transformation parameters. This flexibility can be exploited to optimize various signal processing tasks, such as compression, encryption, and pattern recognition. The interested reader may consult \cite{Ozaktas,Zayed} for more information about the FrFT and its applications.\\

In his ground-breaking work \cite{Mallat}, Mallat has defined a DCNN using the wavelet transform. Afterwards in \cite{Wiatowski}, Wiatowski et al. have generalized Mallat's wavelet-based convolutional network using a countable collection of semi-discrete frames. In order to introduce their notion of DCNN, let us fix the following notation. Consider a sequence of ordered tuples $\Omega=\{(\Psi_k, P_k, M_k)\}$, where each $\Psi_k=\{g_{\lambda_k}:\lambda_k\in\Lambda_k\}$ generates a semi-discrete frame for $L^2(\R^n)$ and $M_k$ and $P_k$ are called the non-linearity and pooling operators, respectively. These operators are Lipschitz continuous on $\ltn$ with $P_kf=M_kf=0$, for $f=0$, $k\in \N$. For each $k\in \N$, we fix some $g_{\lambda_k^*}\in \Psi_k$, which we call {\it output generating atom} at the $(k-1)^{th}$ network level. With a little abuse of notation, we write $\Lambda_k$ for $\Lambda_k\setminus\{\lambda_k^*\}$ as well. Using this set-up the feature extractor is then defined as
\[
\Phi_\Omega(f) :=\bigcup_{k=0}^\infty \Phi^k_\Omega(f),
\]
where
\[
\Phi^{k}_{\Omega}(f) :=\lc \lp U^\theta[q]f\rp \star_\theta \varphi_k:q\in\Lambda^k\rc.
\]
Here, $\varphi_k$ is the output generating atom at the $k$-th network level,  $\Lambda^k :=\Lambda_1\times\cdots\times\Lambda_k$ and the operator $U$ is defined as
\[
U^k[\lambda_k]f :=D_kP_kM_k\lp f\tc\glk\rp \quad\text{and}\quad U[q]:=U[\lambda_1,\cdots,\lambda_k]=U_k[\lambda_k]\cdots U_1[\lambda_1],
\] 
where $D_k \textcolor{black}{:= D_{s_k}}$ is the dilation operator on $L^2(\R^n)$ by some $s_k>1$, called the {\it pooling factor}. For this feature extractor, the authors in \cite{Wiatowski} have obtained the following translation invariance property:
\[
\lim_{k\to\infty}|\|\Phi_{\Omega}^{k}(T_t f)-\Phi^{k}_{\Omega}(f)\||^2=\lim_{k\to\infty}\sum_{k=0}^\infty\sum_{\lambda\in\Lambda^k}\|U[\lambda]f\star\varphi_k\|^2=0.
\]
\textcolor{black}{Here, $T_t:L^2(\R^n)\to L^2(\R^n)$ denotes the shift operator by $t\in\R^n$.}

\par In this paper, we aim to study the DCNN in connection with the FrFT.  The FrFT is a particular instance of the special affine Fourier transform (SAFT). The SAFT was introduced by Abe and Sheridan in \cite{abe} in connection with optical wave functions. The SAFT of a function $f$ is formally defined as
\begin{equation}\label{eq:saft def eq}
\F_A(f)(\omega) :=\frac{1}{\sqrt{|b|}}\int_{\R} f(t)e^{\frac{\pi i}{b}\left( at^2+2pt-2\omega t+d\omega^2+2(bq-dp)\omega\right)}\diff t,~~\omega\in\R,
\end{equation}
where $A$ stands for the set of all real parameters $\{a,b,c,d,p,q\}$ with $b\neq 0$ and $ad-bc=1$. This transform generalizes many well-known classical transforms such as classical Fourier transform, the FrFT, the Fresnel transform and the linear canonical transform. Setting $A=\{0,1,-1,0,0,0\}$ and $A=\{0,-1,1,0,0,0\}$, we obtain the classical Fourier transform and the inverse Fourier transform, respectively. Taking $A=\{\cos\theta,\sin\theta,-\sin\theta,\cos\theta,0,0\}$ and $A=\{1,\lambda,0,1,0,0\}$ one obtains the FrFT and Fresnel transform, respectively. The SAFT has several applications in optics, signal processing, phase retrieval problem, and image edge detection. We refer to \cite{fu2023riesz,saftOptics} in this connection.
 \par It is relevant to mention that in \cite{Bhandari}, Bhandari and Zayed have studied the Shannon sampling theorem by introducing chirp modulated shift invariant spaces in the context of the SAFT. Later, in \cite{H2}, the authors have defined a translation operator associated with the SAFT, which they then used to obtain shift-invariant spaces in connection with the SAFT and to study sampling problems in those spaces.
\par The FrFT is defined by
\begin{equation}
    \F_\theta(f)(\omega):=\frac{1}{\sqrt{|\sin\theta|}}\int_\R f(t)e^{\pi i\left( (t^2+\omega^2)\cot\theta-2\omega t\csc\theta\right)}\diff t,~~\omega\in\R.
\end{equation}
One can extend the definition of the FrFT to functions on $\R^n$ by treating $t^2$ as $t\cdot t$ and $t\omega=t\cdot\omega$, where $``\cdot"$ denotes the usual inner product on $\R^n$. 
%More precisely,
%\begin{equation}
%\F_\theta(f)(\omega):=\frac{1}{|\sin\theta|^{n/2}}\int_{\R^n} f(t)e^{\pi i\left( (t^2+\omega^2)\cot\theta-2\omega t\csc\theta\right)}\diff t,~~\omega\in\R^n.
%\end{equation} 
%
For the precise statement, see Definition \ref{def3.2}.
 %On the other hand, from the classical Fourier analysis point of view in \cite{chen2021fractional}, Chen et al. studied fractional Fourier transform on $L^p(\R)$ spaces and theory of multipliers in connection with the fractional Fourier transform.
 
\par In this paper, we employ the $\theta$-translation operator $T^\theta$ introduced in \cite{H2} to define a semi-discrete frame associated with the FrFT. We characterize a semi-discrete system $\{T_s^\theta f g_\lambda: s\in\R^n, \lil\}\subset L^1(\R^n)\cap L^2(\R^n)$ to be a semi-discrete frame for $L^2(\R^n)$, where $\Lambda$ is a countable index set. In order to define the DCNN in the context of the FrFT, we use a collection of semi-discrete frames, convolution in connection with the FrFT, and non-linearities and pooling operators similar to that of \cite{Wiatowski}. Finally, we study the $\theta$-translation invariance property of this network. Unlike the classical case, the DCNN connected with the FrFT is not invariant under the $\theta$-translation. (See Corollary \ref{cor: translation invariance}.) \\

\par Another fascinating problem in signal and image processing is data approximation. More precisely, given a finite set of functional data $\fl=\{f_1,\cdots, f_m\}\subset L^2(\R^n)$, one is interested in finding $\Phi:=\{\phi_1,\cdots,\phi_\ell\}\subset L^2(\R^n)$ such that
%a shift invariant subspace of $L^2(\R^n)$ which is ``closest" to the given data set $\fl$. In \cite{AldroubiACHA07}, Aldroubi et al. has obtained  
\[
V(\Phi)=\text{arg min} \sum_{j=1}^m\|f_j-P_{V}f_j\|^2,
\] 
where the minimum is taken over all shift invariant spaces generated by at most $\ell$ elements, \textcolor{black}{$V(\Phi):=\cl_{L^2(\R^n)}{\Span}\{T_k\phi_j:k\in\Z^n,1\leq j\leq \ell\}$}, \textcolor{black}{and $P_V:L^2(\R^n)\to L^2(\R^n)$ denoting the orthogonal projection operator onto $V$.} Clearly, this is a least square problem in an infinite dimensional Hilbert space. In \cite{AldroubiACHA07}, Aldroubi et al. converted this infinite dimensional problem into an uncountable collection of least square problems in finite dimensional Hilbert spaces using the Fourier transform. Then, they obtained the solution to each finite dimensional least square problem by using the Eckart-Young theorem and by patching together these solutions to obtain a solution to the infinite dimensional data approximation problem. Later on, in \cite{cabrelli}, using multi-tiles of $\R^n$, Cabrelli et al. have obtained a space of band-limited functions which minimizes $\sum\limits_{j=1}^m\|f_j-P_{V}f_j\|^2$. 
%Here we would like to mention that the later result gives only existence of a Paley-Wiener space.\\
\par In this paper, we study the data approximation problem using the FrFT. In order to do so, we need to first introduce the notion of a fiber map associated with the FrFT and study some properties of certain $\theta$-shift invariant spaces. Afterwards, we consider the data approximation problem. Specifically, given a set of functional data $\mathcal{F}=\{f_1,\cdots,f_m\}\subset L^2(\R^n)$ and $\ell\in\N$, we obtain $\phi_1,\cdots,\phi_\ell\in L^2(\R^n)$ such that
    \begin{equation}\label{eq: intro data approx}
    \sum_{i=1}^m\|f_i-P_{V_\theta}f_i\|^2\leq\sum_{i=1}^m\|f_i-P_{W_\theta}f_i\|^2,
    \end{equation}
    holds for all $\theta$-shift invariant spaces $W_\theta$ generated by at most $\ell$ elements, where $V_\theta=\cl_{L^2(\R^n)}{\Span}\{T_k^\theta\phi_i:i=1,\cdots,\ell;\,k\in\Z^n\}$. Further, we present a formula to compute the error in the above least square approximation. Moreover, we prove the existence of a space of band-limited functions in the FrFT domain which minimizes the right-hand side of \eqref{eq: intro data approx}.\\

\section{Preliminaries}   
In this section, we provide some necessary preliminaries.

\begin{definition}
A sequence of elements $\{f_k:k\in \N\}$ in a separable Hilbert space $\Hi$ is called a \textit{frame} for $\Hi$ if there exist $0<C_1\leq C_2<\infty$ such that
\begin{equation}\label{eq:frame def}
C_1\|f\|^2\leq \sum_{k=1}^\infty |\langle f,f_k \rangle |^2\leq C_2\|f\|^2,\quad \text{ for all } f\in \Hi.
\end{equation}
\par If the inequality on right-hand side is satisfied for a sequence in $\Hi$, then this sequence is called a \textit{Bessel sequence}. If the inequalities \eqref{eq:frame def} hold with $C_1=C_2=1$, then $\{f_k:k\in \N\}$ is called a \textit{Parseval frame}. The numbers $C_1$ and $C_2$ are called the {\it lower} and {\it upper} frame bounds, respectively.
\end{definition}	

\begin{definition}
A sequence $\{f_k:k\in\N\}$ in a separable Hilbert space $\Hi$ is called a {\it Riesz basis} if
\begin{itemize}
\item[(i)] $\cl_{\Hi}{\Span}\{f_k:k\in\N\}=\Hi$;
\item[(ii)] there exist $0<C_1\leq C_2<\infty$ such that
\[
C_1\sum_{k\in\N}|c_k|^2\leq\lnm\sum_{k\in\N}c_kf_k\rnm^2\leq C_2\sum_{k\in\N}|c_k|^2,~~\text{ for all }\{c_k\}\in c_{00}.
\]
\end{itemize}
\textcolor{black}{Here, $c_{00}$ denotes the space of finitely supported null sequences.} 
If a collection of vectors is a Riesz basis for its closed linear span, then it is called a {\it Riesz sequence}.  
\end{definition}

\begin{definition}
The \textit{Gramian} associated with a Bessel sequence $\{f_k:k\in \N\}$ is an operator on $\ell ^2(\N)$ whose $(j,k)^{th}$ entry in the matrix representation with respect to the canonical orthonormal basis is $\langle f_k,f_j\rangle .$
\end{definition}

\begin{definition}
Let $\{g_\lambda:\lambda\in \Lambda\}\subset L^1(\R^n)\cap L^2(\R^n)$, where $\Lambda$ is a countable index set. Then, the collection of functions
\[
\Psi_\Lambda=\{T_sIg_\lambda:s\in \R^n,\lambda\in \Lambda\}
\]
is called a \textit{semi-discrete frame} for $L^2(\R^n)$ if there exist $0<C_1\leq C_2<\infty $ such that
\begin{equation}\label{eq:2 sdf defn}
C_1\|f\|^2\leq \sum_{\lambda\in \Lambda}\int_{\R^n}\big|\big\langle f,T_sIg_\lambda\big\rangle\big|^2ds\leq C_2\|f\|^2,
\end{equation}
for all $f\in L^2(\R^n)$, where the involution operator $I: L^2(\R^n)\to L^2(\R^n)$ is given by $If(x):=\overline{f(-x)}$. If \eqref{eq:2 sdf defn} holds with $C_1=C_2=1$, then $\Psi_\Lambda$ is called a {\it semi-discrete Parseval frame}. The generators $g_\lambda,~\lambda\in \Lambda$, are called {\it atoms} of the semi-discrete frame $\Psi_\Lambda$.
\end{definition}

\begin{definition}
A closed subspace $V$ of $L^2(\R^n)$ is said to be a \textit{shift invariant space} if $f\in V \Rightarrow T_kf \in V,$ for all $k \in \Z^n$ and for all $f\in V$.
\par In particular, for $\Phi:=\{\phi_1,\cdots\phi_m\} \subset L^2(\R^n)$, $V(\Phi):=\cl_{L^2(\R^n)}{\Span}\{T_k\phi_\ell :k \in \Z,\, \ell=1,\cdots,m\}$ is called a \textit{finitely generated shift invariant space}.
\end{definition}

%\subsection{The special affine Fourier transform}
%Recall that for $f\in L^1(\R)$, the {\it special affine Fourier transform} is defined by
%\begin{equation}\label{eq:saft def}
%\F_A(f)(\omega)=\frac{1}{\sqrt{|b|}}\int_{\R} f(t)e^{\frac{\pi i}{b}\left( at^2+2pt-2\omega t+d\omega^2+2(bq-dp)\omega\right)}\diff t,~~\omega\in\R,
%\end{equation}
%where $A$ stands for the set of all real parameters $\{a,b,c,d,p,q\}$ with $b\neq 0$ and $ad-bc=1$. Further, one can extend the SAFT as a unitary operator to $L^2(\R)$. 

The translation and modulation operators associated with the special affine Fourier transform were introduced in \cite{Hasan3} and \cite{H2}, respectively. As we mentioned earlier, as the FrFT is a particular case of the SAFT (with $A=\{\cos\theta, \sin\theta, -\sin\theta, \cos\theta\}$), we provide here the definitions of translation, modulation and convolution operators connected with the FrFT. 

\begin{definition}[\cite{H2}]\label{def: preli a translation}
% Let $f\in L^{r}(\R),\ (1\leq r<\infty)$ and
For any  $s \in \R^n$, the $\theta$-{\it translation} of a measurable function $f:\R^n\to\R^n$ by $s$, denoted by $T_s^\theta f$, is defined by
\begin{equation}
T_s^\theta f(t) :=e^{2\pi is(t-s)\cot\theta}f(t-s),~~t\in\R^n.
\end{equation}
\end{definition}

\begin{remark}
We can relate the $\theta$-translation operator and the classical translation operator as follows.
\begin{equation}\label{eq:T_xT_x^A}
\ch (T_s^\theta f)=e^{\pi i s^2\cot\theta}T_s(\ch f),
\end{equation}
where the operator $\ch$, known as {\it chirp modulation}, is defined by
\begin{equation}\label{eq: chirp modulation}
\ch f(s):=e^{\pi is^2\cot\theta}f(s).
\end{equation}
\end{remark}

\begin{example}
In Figure \ref{fig2}, we display the $\theta$-translation with $s_1 = s_2 = 1$ of the unit box function $\chi_{[-\frac12,\frac12]}\otimes\chi_{[-\frac12,\frac12]}$ and the Gaussian $e^{-t\cdot t}$ in $\R^2$.
\begin{figure}[h!]
\begin{center}
\includegraphics[width=5cm, height= 4cm]{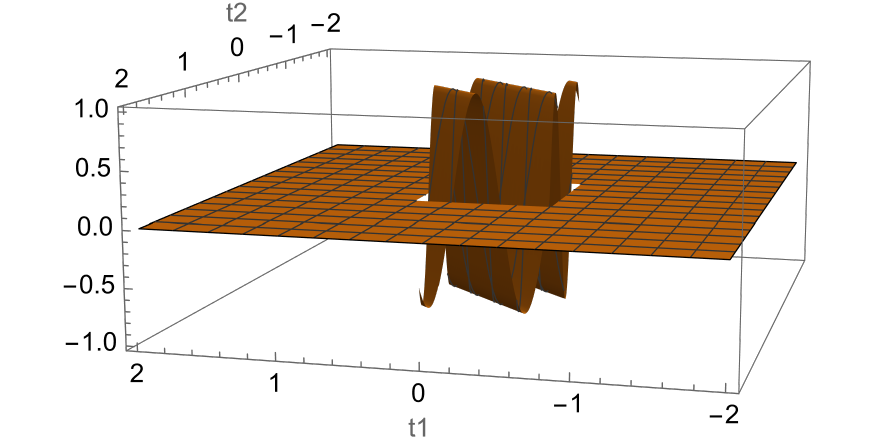}\hspace*{2cm}
\includegraphics[width=5cm, height= 4cm]{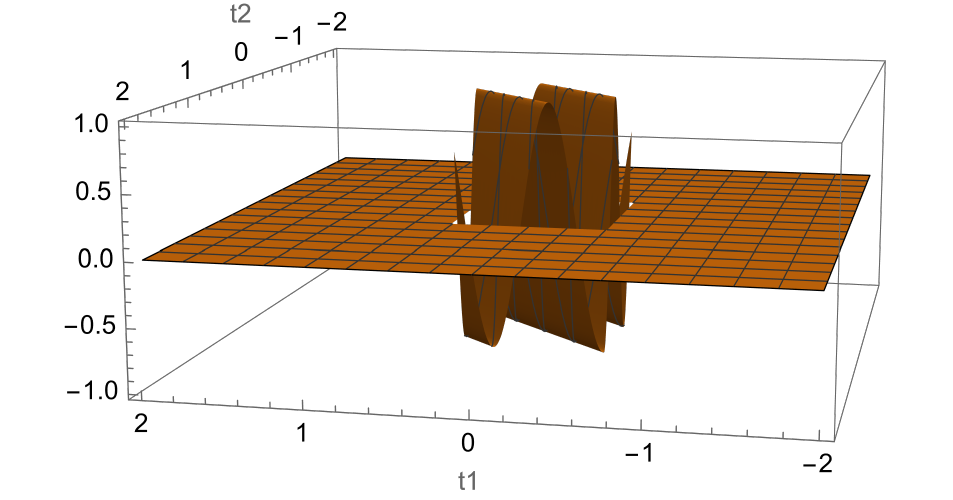}\\
\includegraphics[width=5cm, height= 4cm]{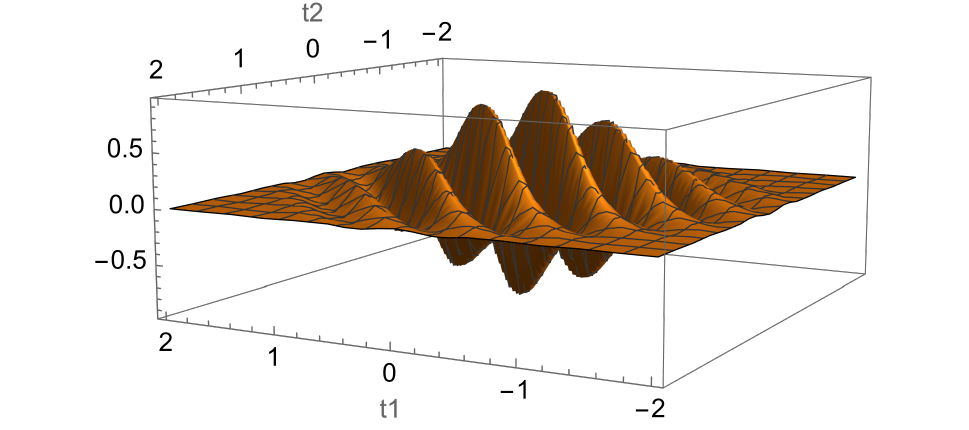}\hspace*{2cm}
\includegraphics[width=5cm, height= 4cm]{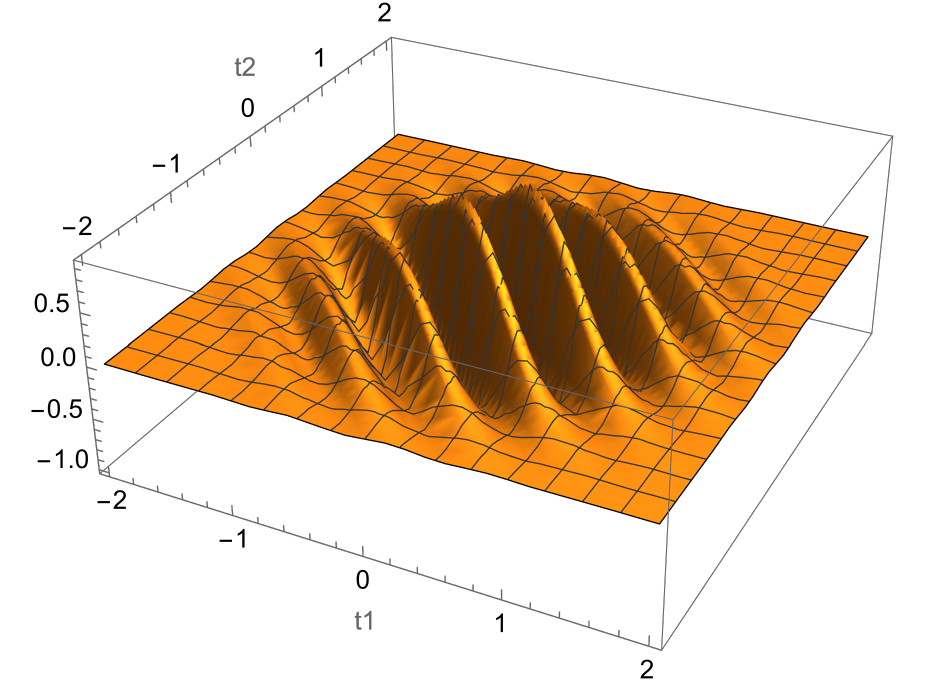}
\caption{The real and imaginary parts of the $\theta$-translation of the unit box function (above) and the Gaussian $e^{-t\cdot t}$ (bottom) in $\R^2$ for $\theta = \pi/5$ and  $s_1 = s_2 = 1$.}\label{fig2}
\end{center}
\end{figure}
\end{example}
\begin{example}
The real and imaginary parts of the chirp-modulated unit box function and the Gaussian $e^{-t\cdot t}$ in $\R^2$ are shown in Figure \ref{fig3}.
\begin{figure}[h!]
\begin{center}
\includegraphics[width=6cm, height= 4cm]{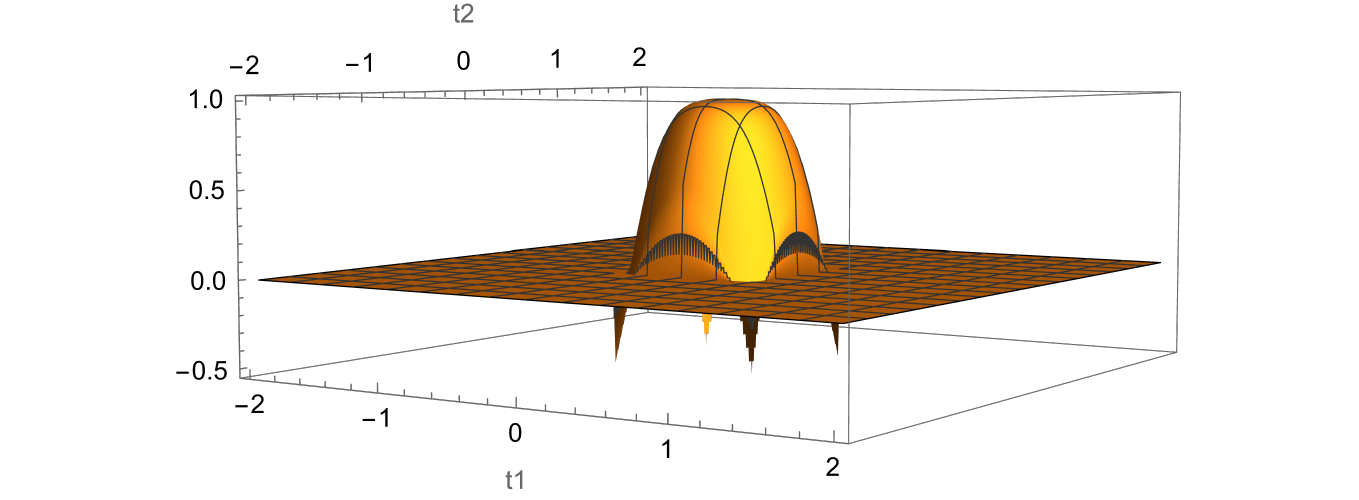}\hspace*{1cm}
\includegraphics[width=6cm, height= 4cm]{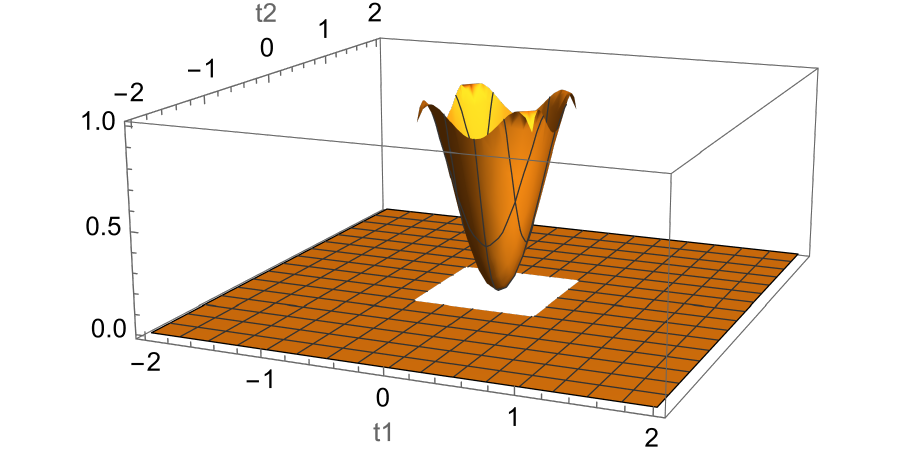}\\
\includegraphics[width=6cm, height= 4cm]{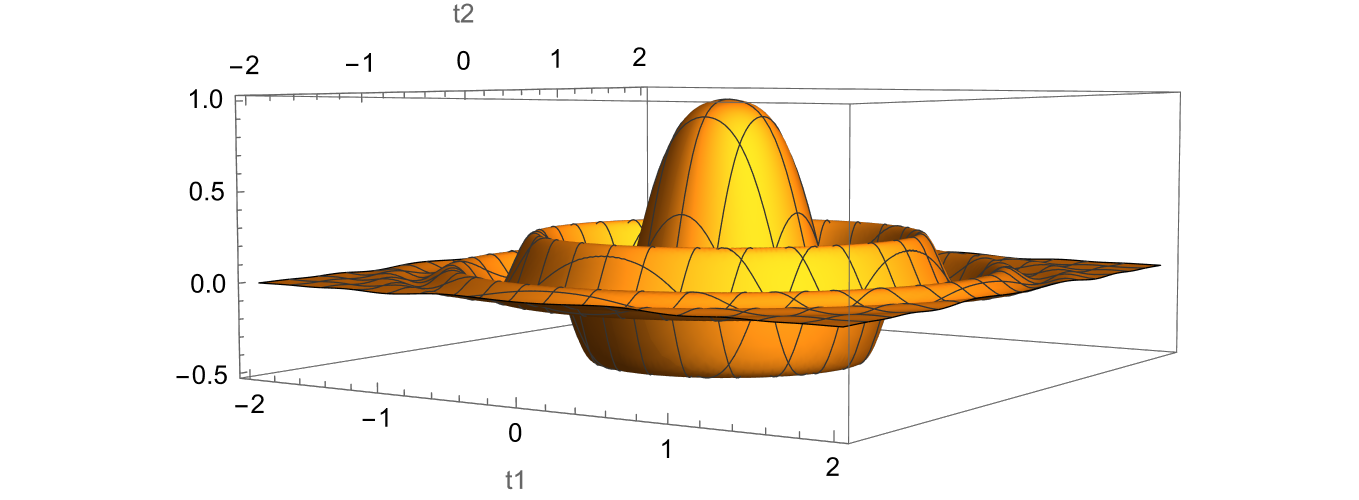}\hspace*{1cm}
\includegraphics[width=6cm, height= 4cm]{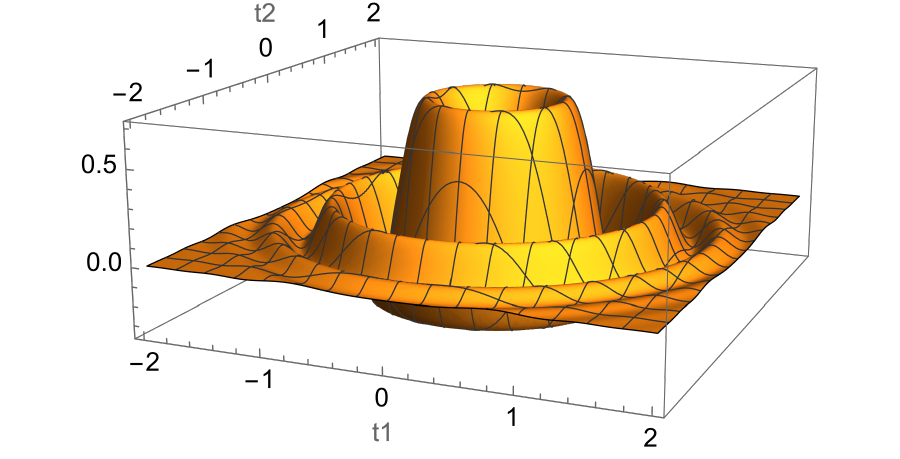}
\caption{The real and imaginary parts of the chirp modulation of the unit box function $\chi_{[-\frac12,\frac12]}\otimes\chi_{[-\frac12,\frac12]}$ (top) and the Gaussian $e^{-t\cdot t}$ (bottom) in $\R^2$ for $\theta = \pi/5$.}\label{fig3}
\end{center}
\end{figure}
\end{example}

\begin{definition}[\cite{Bhandari}, \cite{H2}]\label{def: preli a convolution}
Let $f,\,g\in L^1(\R^n)$. Then, the $\theta$-{\it convolution} of $f$ and $g$
is defined by
\begin{equation}\label{eq:A-convolution}
 (f\star_\theta g)(x):=\frac{1}{|\sin\theta|^{n/2}}\int_{\R^n} f(s) T_s^\theta g(x) \diff s.
\end{equation}
Furthermore, one has that
\begin{equation}\label{eq:A-convolution, convolution}
\ch (f\star_\theta g)=\frac{1}{|\sin\theta|^{n/2}}(\ch f\star \ch g),
\end{equation}
where
\[
f\star g(x):=\int_\R f(x-t)g(t)\diff t
\]
\textcolor{black}{is the usual convolution of two functions.}
\end{definition}
 
\begin{remark}[\cite{Bhandari}]\label{rk: preli convolution theorem}
  Let $f,\,g\in L^1(\R)$. Then,
  \begin{equation}\label{eq: conv F frft}
  \F_\theta(f\star_\theta g)(\omega)=e^{-\pi i\omega^2\cot\theta}\F_\theta(f)(\omega)\F_\theta(g)(\omega),~\omega\in\R.
  \end{equation}
\end{remark}

%In the context of the SAFT and the convolution, we obtain the following

\begin{definition}[\cite{Hasan3}]\label{def: preli a modulation}
For any $s \in \R$, the $\theta$-{\it modulation} of a measurable function $f:\R^n\to\R^n$ is defined as:
 \begin{equation}
 M_s^\theta f(t):=e^{\pi i(s^2\cot\theta+2st\csc\theta)}f(t).
 \end{equation}
The $\theta$-translation and $\theta$-modulation are connected via the FrFT in the expected way, i.e., one has 
\begin{equation}\label{eq: preli translation modulation}
\F_\theta (T_s^\theta f)(\omega)=M_{-s}^\theta\F_\theta(f)(\omega).
\end{equation}
\end{definition}

\begin{definition}
    Let $\Omega\subset\R^n$ be measurable and let $L\subset\R^n$ be a countable set. We say that $\Omega$ {\it tiles} $\R^n$ when translated by $L$ at level $\ell\in \N$ if
    \[
    \sum_{t\in L}\bigchi_{\Omega}(\omega+t)=\ell, ~~\text{ a.e. }\omega\in\R^n.
    \]
    If $L=\Z^n\sin\theta$, then we say that $\Omega$ is a {\it fractional $\ell$ multi-tile}.
\end{definition}

\begin{example}
The real and imaginary parts of the $\theta$-modulation of the unit box function $\chi_{[-\frac12,\frac12]}\otimes\chi_{[-\frac12,\frac12]}$ and the Gaussian $e^{-t\cdot t}$ for $\theta = \pi/3$, $s_1 = \sqrt{2}$, and $s_2 = -\frac12$ are depicted in Figure \ref{fig4}.

\begin{figure}[h!]
\begin{center}
\includegraphics[width=6cm, height= 4cm]{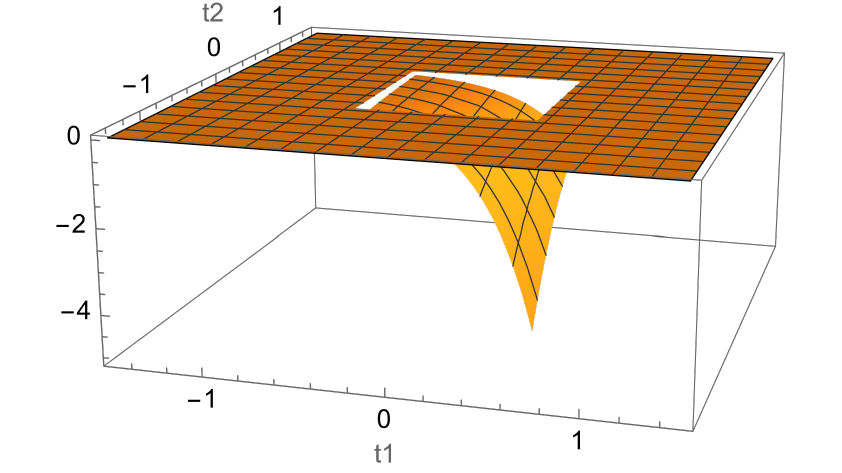}\hspace*{1cm}
\includegraphics[width=6cm, height= 4cm]{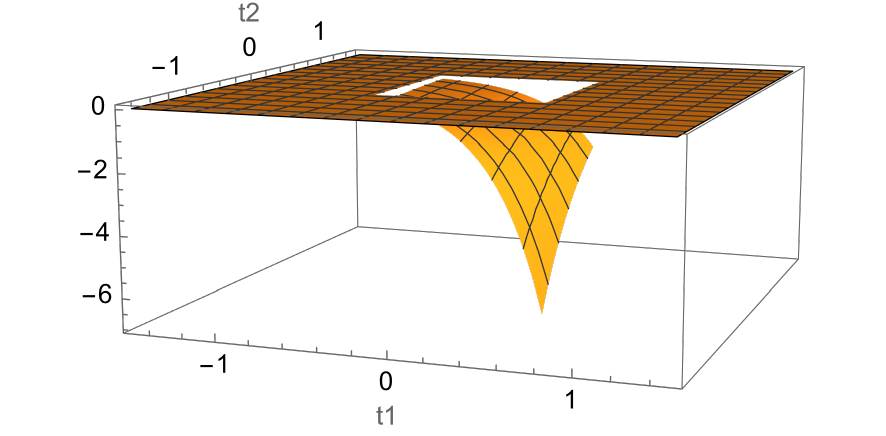}\\
\includegraphics[width=6cm, height= 4cm]{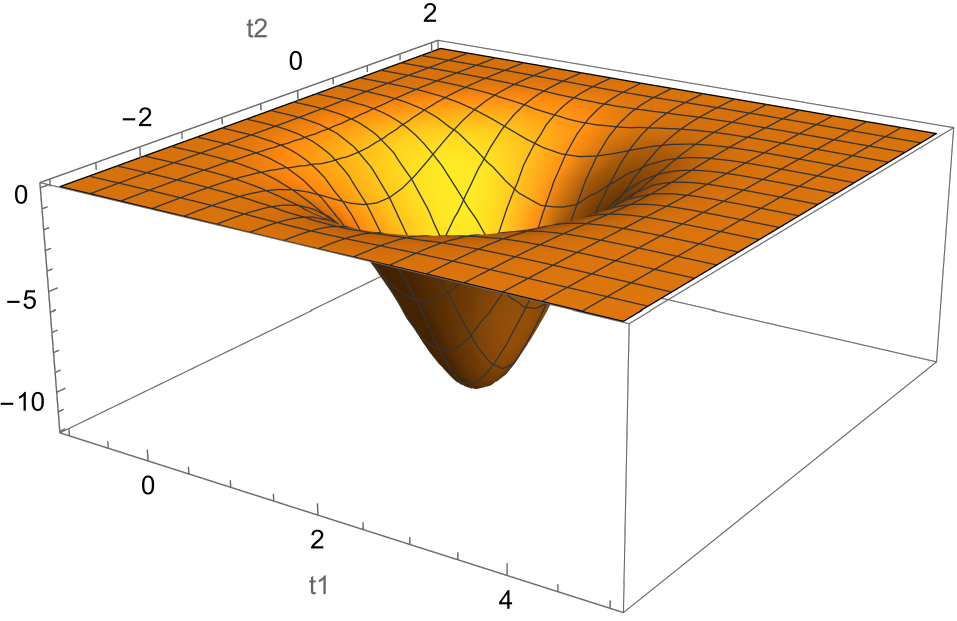}\hspace*{1cm}
\includegraphics[width=6cm, height= 4cm]{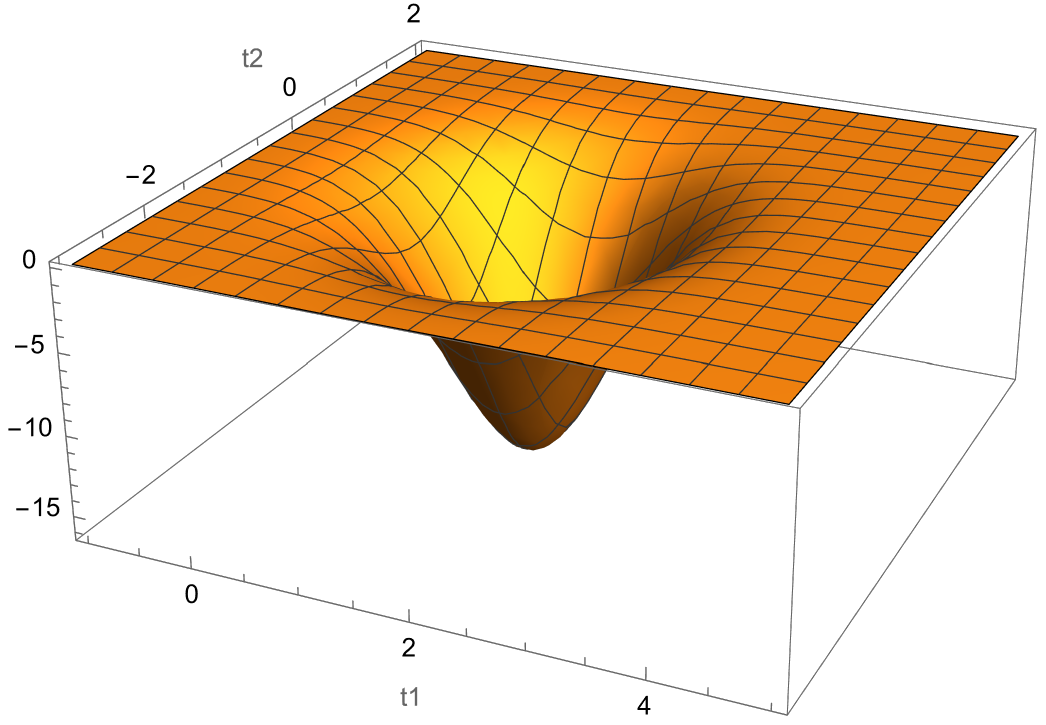}
\caption{The real and imaginary parts of the $\theta$-modulation of the unit box function $\chi_{[-\frac12,\frac12]}\otimes\chi_{[-\frac12,\frac12]}$ (top) and the Gaussian $e^{-t\cdot t}$  (bottom) for $\theta = \pi/3$, $s_1 = \sqrt{2}$, and $s_2 = -\frac12$.}\label{fig4}
\end{center}
\end{figure}
\end{example}

\begin{lemma}[\cite{kolountzakis}]\label{lemma: partition of tile}
    Suppose that $\Omega\subset\R^n$ is a measurable set which tiles $\R^n$ at level $\ell$ when translated by $L$. Then, we have
    \[
    \Omega=\Omega_1\cup\cdots\Omega_\ell\cup E,
    \]
    where $E$ is a set of measure zero, the $\Omega_j$'s are mutually disjoint, and each of them multi-tiles $\R^n$ when translated by $L$.
\end{lemma}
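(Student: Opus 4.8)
The plan is to realize the decomposition as a measurable coloring of $\Omega$ that selects, on each $L$-coset, exactly one point for each of the $\ell$ colors. Throughout I treat $L$ as a full-rank lattice, which is the only case needed since in this paper $L=\Z^n\sin\theta$ with $\sin\theta\neq 0$; the coset structure of $\R^n/L$ is precisely what makes the statement meaningful. First I would fix a measurable fundamental domain $D$ for $L$, so that $\R^n=\bigcup_{t\in L}(D+t)$ up to a set of measure zero with the translates $D+t$ pairwise disjoint. This yields two measurable maps $\pi:\R^n\to D$ and $\tau:\R^n\to L$ with $x=\pi(x)+\tau(x)$, assigning to each $x$ its coset representative and its lattice part.

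The tiling hypothesis $\sum_{t\in L}\bigchi_{\Omega}(\omega+t)=\ell$ says precisely that for almost every $\omega\in D$ the coset $\omega+L$ meets $\Omega$ in exactly $\ell$ points. Writing $T(\omega):=\{t\in L:\omega+t\in\Omega\}$, we have $\#\,T(\omega)=\ell$ for a.e. $\omega$. The crucial step is to order these $\ell$ lattice points measurably. Fixing once and for all an enumeration $L=\{s_1,s_2,\dots\}$, I would list the elements of $T(\omega)$ in increasing order of their index in this enumeration, obtaining selection maps $t_{(1)}(\omega),\dots,t_{(\ell)}(\omega)$. Each level set $\{\omega\in D:t_{(j)}(\omega)=s_m\}$ is cut out by finitely many conditions of the form $\omega+s_k\in\Omega$, hence is measurable; since there are only countably many such level sets, every $t_{(j)}$ is a measurable function on $D$.

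I would then define, for $1\le j\le\ell$,
\[
\Omega_j:=\{x\in\Omega:\ \tau(x)=t_{(j)}(\pi(x))\},
\]
together with the exceptional null set $E$ on which the coset of $x$ fails to meet $\Omega$ in exactly $\ell$ points. Measurability of each $\Omega_j$ follows from that of $\pi$, $\tau$, and the $t_{(j)}$; the sets are pairwise disjoint because distinct colors force distinct lattice parts over a common representative; and $\Omega=\Omega_1\cup\cdots\cup\Omega_\ell\cup E$ by construction. Finally, for a.e. $\omega\in D$ the coset $\omega+L$ contributes to $\Omega_j$ exactly the single point $\omega+t_{(j)}(\omega)$, whence $\sum_{t\in L}\bigchi_{\Omega_j}(\omega+t)=1$ a.e.; that is, each $\Omega_j$ tiles $\R^n$ at level $1$, and in particular multi-tiles. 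A consistency check is that summing these $\ell$ identities and using $\sum_j\bigchi_{\Omega_j}=\bigchi_{\Omega}$ returns the hypothesis, so the count $\sum_j 1=\ell$ is forced and no piece can be wasted.

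The step I expect to be the only real obstacle is the measurable selection: producing the orderings $t_{(1)},\dots,t_{(\ell)}$ as \emph{genuinely measurable} functions on $D$, rather than merely choosing $\ell$ points on each fiber by the axiom of choice. The enumeration device resolves this, since the $j$-th chosen lattice vector is constant on each of the countably many measurable pieces of $D$ determined by which translates land in $\Omega$. All remaining assertions -- disjointness, covering up to measure zero, and the level-$1$ tiling identity -- are then routine bookkeeping.
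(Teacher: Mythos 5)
Your proof is correct. The paper itself gives no proof of this lemma at all --- it is quoted from \cite{kolountzakis} --- and your measurable-selection argument (fixing a fundamental domain, ordering the $\ell$ lattice points that each coset contributes to $\Omega$ via a fixed enumeration of $L$, and letting $\Omega_j$ collect the $j$-th points) is precisely the standard proof of the cited result; your restriction to full-rank lattices is also consistent with both the cited statement and the paper's only use of the lemma, namely $L=\Z^n\sin\theta$.
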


\section{Deep convolutional neural network in connection with the FrFT}
In this section, we introduce the notion of a semi-discrete system connected with the FrFT $\F_\theta$ using the $\theta$-translation operator. Then, using these semi-discrete systems, we define a {\it deep convolutional neural network} associated with the FrFT. Finally, we study the $\theta$-translation invariance properties for this network. We use the following notation in the subsequent proposition: \textcolor{black}{The idempotents $I,\, \widecheck{} :L^1(\R^n)\to L^1(\R^n)$ are defined by}
\begin{align*}
    If(x)&:=\overline{f(-x)},\\
    \textcolor{black}{\widecheck{f}}(x)&:=f(-x).
\end{align*}

\begin{propn}
Let $\Lambda$ be a countable index set and $\lc g_\lambda:\lil\rc$ be a collection of functions in $L^1(\R^n)\cap\ltn$. Then, the system of translates $\left\{T_sIg_\lambda:s\in\R^n,\,\lambda\in\Lambda\right\}$ is a semi-discrete frame for $L^2(\R^n)$ if and only if $\left\{T_s \widecheck{g}_\lambda:s\in\R^n,\,\lambda\in\Lambda\right\}$ is a semi-discrete frame for $L^2(\R^n)$.
\end{propn}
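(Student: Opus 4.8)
The plan is to collapse both frame conditions onto a single quantity by exploiting the elementary interplay between the two idempotents $I$ and $\widecheck{\ }$ and complex conjugation on $\ltn$, which is a bijective isometry. Concretely, I would show that the defining sum in \eqref{eq:2 sdf defn} for the family $\{T_s\widecheck{g}_\lambda\}$ evaluated at $f$ equals the corresponding sum for $\{T_sIg_\lambda\}$ evaluated at $\overline{f}$; since $\|\overline{f}\|=\|f\|$ and $f\mapsto\overline{f}$ is onto, the frame inequalities for one system hold, with given constants $C_1,C_2$, exactly when those for the other hold with the same constants.

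First I would unwind the two families of atoms. Writing out the translations, for $f\in\ltn$, $s\in\R^n$ and $\lil$,
\[
T_sIg_\lambda(x)=Ig_\lambda(x-s)=\overline{g_\lambda(s-x)},\qquad T_s\widecheck{g}_\lambda(x)=\widecheck{g}_\lambda(x-s)=g_\lambda(s-x),
\]
so that $T_s\widecheck{g}_\lambda=\overline{T_sIg_\lambda}$ pointwise. Feeding this into the inner product and moving the conjugate onto $f$ gives
\[
\lng f,T_s\widecheck{g}_\lambda\rng=\intn f(x)\,\overline{T_s\widecheck{g}_\lambda(x)}\,\diff x=\intn f(x)\,T_sIg_\lambda(x)\,\diff x=\overline{\lng\overline{f},T_sIg_\lambda\rng},
\]
and hence $\big|\lng f,T_s\widecheck{g}_\lambda\rng\big|^2=\big|\lng\overline{f},T_sIg_\lambda\rng\big|^2$ for every $s$ and $\lambda$.

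Integrating in $s$ over $\R^n$ and summing over $\lambda\in\Lambda$ then yields the key identity
\[
\sum_{\lil}\intn\big|\lng f,T_s\widecheck{g}_\lambda\rng\big|^2\diff s=\sum_{\lil}\intn\big|\lng\overline{f},T_sIg_\lambda\rng\big|^2\diff s .
\]
From here I would conclude as follows: if $\{T_sIg_\lambda\}$ is a semi-discrete frame with bounds $C_1,C_2$, then applying its frame inequalities to $\overline{f}$ and using $\|\overline{f}\|=\|f\|$ shows, via the identity above, that $\{T_s\widecheck{g}_\lambda\}$ satisfies \eqref{eq:2 sdf defn} for $f$ with the same $C_1,C_2$; since $f\in\ltn$ was arbitrary, $\{T_s\widecheck{g}_\lambda\}$ is a semi-discrete frame with identical bounds. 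The reverse implication is completely symmetric (one may run the same argument after noting $\widecheck{\ }$ and $I$ play interchangeable roles once conjugation is absorbed), so the stated equivalence follows, and in fact the frame bounds are preserved.

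I do not expect a genuine obstacle: the statement is in essence a bookkeeping identity, and the only points demanding care are the change of variable implicit in the action of $T_s$ on $Ig_\lambda$ and on $\widecheck{g}_\lambda$, and the correct placement of the conjugate when it is transferred from the atom onto $f$. As a sanity check one may recast the entire argument through the convolution identities $\lng f,T_sIg_\lambda\rng=(f\star g_\lambda)(s)$ and $\lng f,T_s\widecheck{g}_\lambda\rng=(f\star\overline{g_\lambda})(s)$ together with $\overline{f\star\overline{g_\lambda}}=\overline{f}\star g_\lambda$, which reproduces the same key identity and confirms the equivalence of the two frame conditions.
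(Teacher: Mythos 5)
Your proof is correct and takes essentially the same route as the paper's: both arguments reduce the equivalence to the identity $|\langle f, T_s\widecheck{g}_\lambda\rangle| = |\langle \overline{f}, T_sIg_\lambda\rangle|$ together with the fact that $f\mapsto\overline{f}$ is an isometric bijection of $L^2(\R^n)$, so the frame bounds transfer unchanged. The paper simply phrases this computation through the convolution identities $\langle f, T_sIg_\lambda\rangle = (f\star g_\lambda)(s)$ and $\langle T_s\widecheck{g}_\lambda, f\rangle = (g_\lambda\star\overline{f})(s)$, which is exactly the recasting you mention as a sanity check at the end.
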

\begin{proof}
Consider
\begin{equation}\label{eq: sdf equiv 1}
\lng f,\,T_sIg_\lambda\rng=\int_{\R^n} f(t)\overline{T_sIg_\lambda (t)}\diff t=\int_{\R^n}f(t)g_\lambda(s-t)\diff t=f\star g_\lambda(s).\\
\end{equation}
Similarly,
\begin{equation}\label{eq: sdf equiv 2}
\lng T_s \widecheck{g}_\lambda,\, f\rng=g_\lambda\star\bar{f}(s).
\end{equation}
Now, the system $\lc T_sIg_\lambda:s\in\R^n,\,\lambda\in\Lambda \rc$ is a semi-discrete frame for $L^2(\R^n)$ if and only if there exist $0<C_1\leq C_2<\infty$ such that
\[
C_1\lnm f\rnm^2\leq\sum_{\lil}\intn\abs{\lng f,\,T_sIg_\lambda\rng}^2\diff s\leq C_2\lnm f\rnm^2,~~\text{ for all }f\in L^2(\R^n).
\]
This is same as
\[
C_1\lnm \bar{f}\rnm^2\leq\sum_{\lil}\intn\abs{\lng \bar{f},\,T_sIg_\lambda\rng}^2\diff s\leq C_2\lnm\bar{f}\rnm^2, \quad\text{ for all }f\in L^2(\R^n),
\]
which is equivalent to
\[
C_1\lnm f\rnm^2\leq\sum_{\lil}\intn\abs{\bar{f}\star g_\lambda(s)}^2\diff s\leq C_2\lnm f\rnm^2, \quad\text{ for all }f\in L^2(\R^n).
\]
using \eqref{eq: sdf equiv 1}. Finally, \eqref{eq: sdf equiv 2} leads to
\[
C_1\lnm f\rnm^2\leq\sum_{\lil}\intn\abs{\lng f, T_s \widecheck{g}_\lambda\rng}^2\diff s\leq C_2\lnm f\rnm^2,\quad\text{ for all }f\in L^2(\R^n).
\]
\end{proof}

Next, we characterize a system of semi-discrete frames using the FrFT.  Towards this end, we recall first the notion of the FrFT in higher dimensions.

\begin{definition}\label{def3.2}
Let $f\in L^1(\R^n)$. Then, the FrFT of $f$, denoted by $\F_\theta(f)$, is defined as
\[
\F_\theta(f)(\omega) :=
\begin{cases}
 \frac{1}{|\sin\theta|^{n/2}}\displaystyle{\intn} f(t)e^{-\pi i\lp(t^2+\omega^2)\cot\theta-2\omega t\csc\theta\rp}, &~\theta\neq k\pi; \\
 f(\omega),&~\theta=2k\pi;\\
 f(-\omega) \textcolor{black}{= \widecheck{f}(\omega)},&~\theta=(2k-1)\pi.
 \end{cases}
 \]
 %where $K(\theta)=\sqrt{1-i\cot\theta}=\frac{1}{\sqrt{|\sin\theta|}}e^{i\lp \pi/2-sgn(\theta)\pi/4\rp}$.
\end{definition}

\begin{example}
In Figure 4 we display, from top to bottom, (i) the Fourier transform of the unit box function $\chi_{[-\frac12,\frac12]}\otimes\chi_{[-\frac12,\frac12]}$ in $\R^2$, (ii) its FrFT for $\theta = \pi/3$, and (iii) the FrFT of the Gaussian $e^{-t\cdot t}$ in $\R^2$ for $\theta = \pi/4$.
\begin{figure}[h!]
\begin{center}
\includegraphics[width=5cm, height= 3cm]{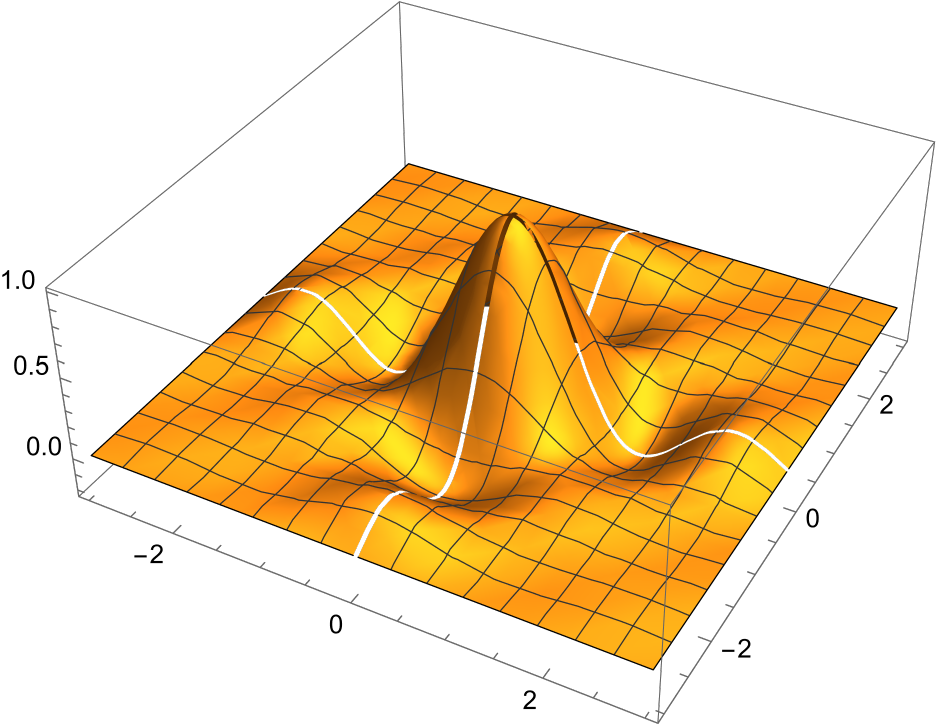}\\
\includegraphics[width=5cm, height= 3cm]{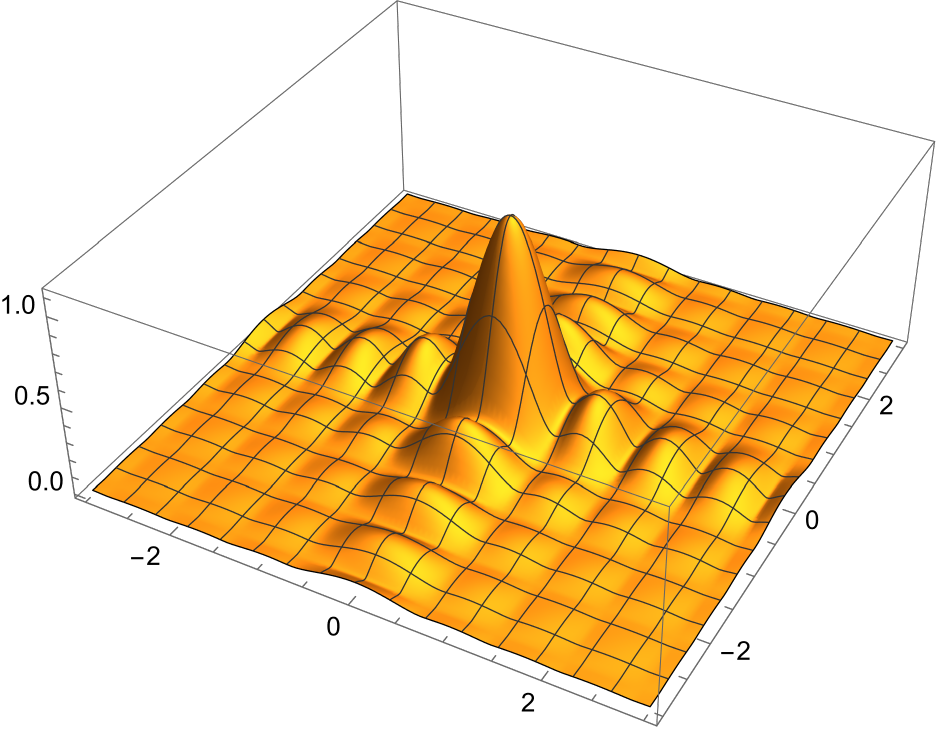}\quad
\includegraphics[width=5cm, height= 3cm]{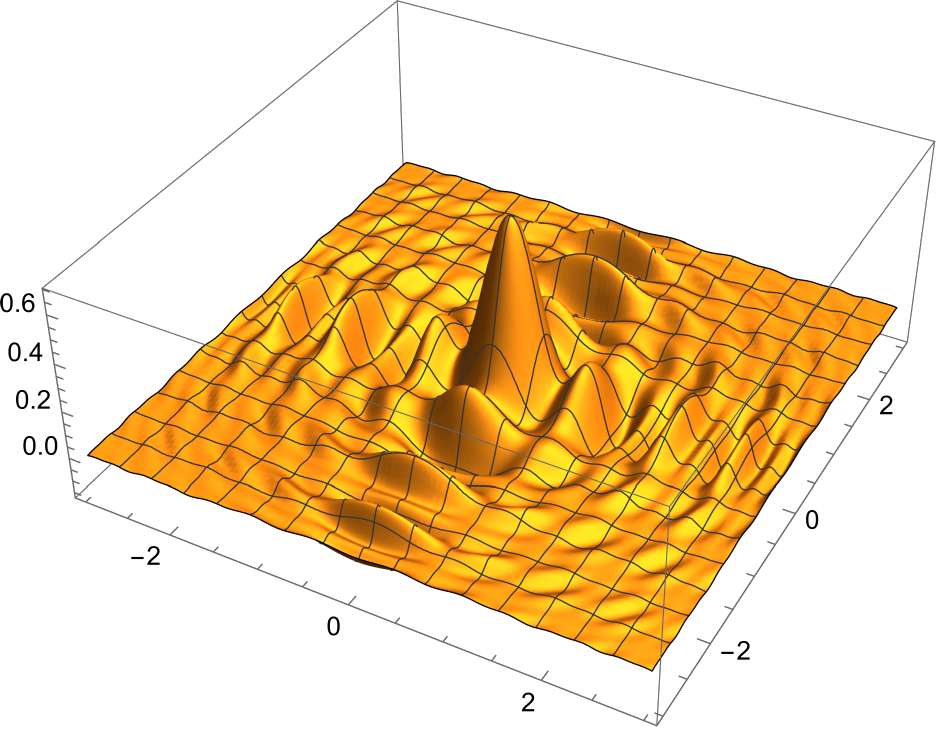}
\includegraphics[width=5cm, height= 3cm]{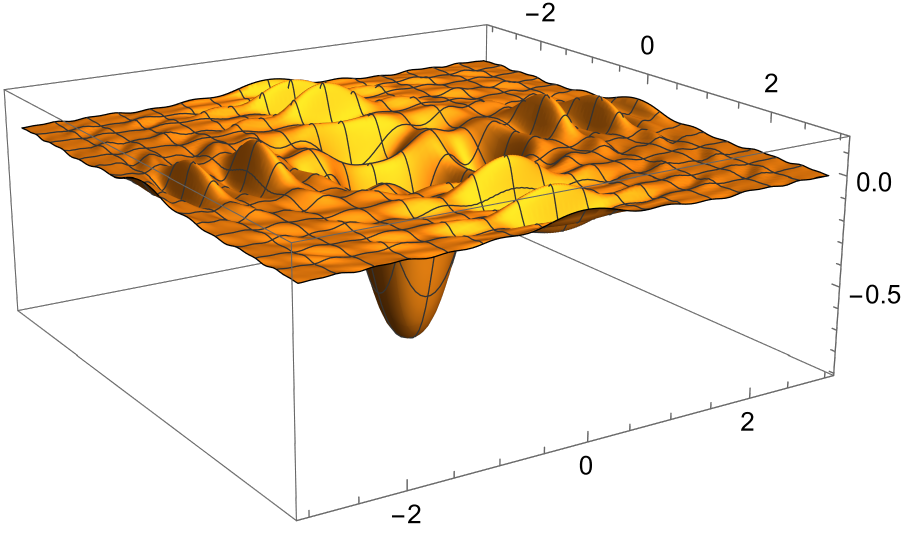}\\
\includegraphics[width=5cm, height= 3cm]{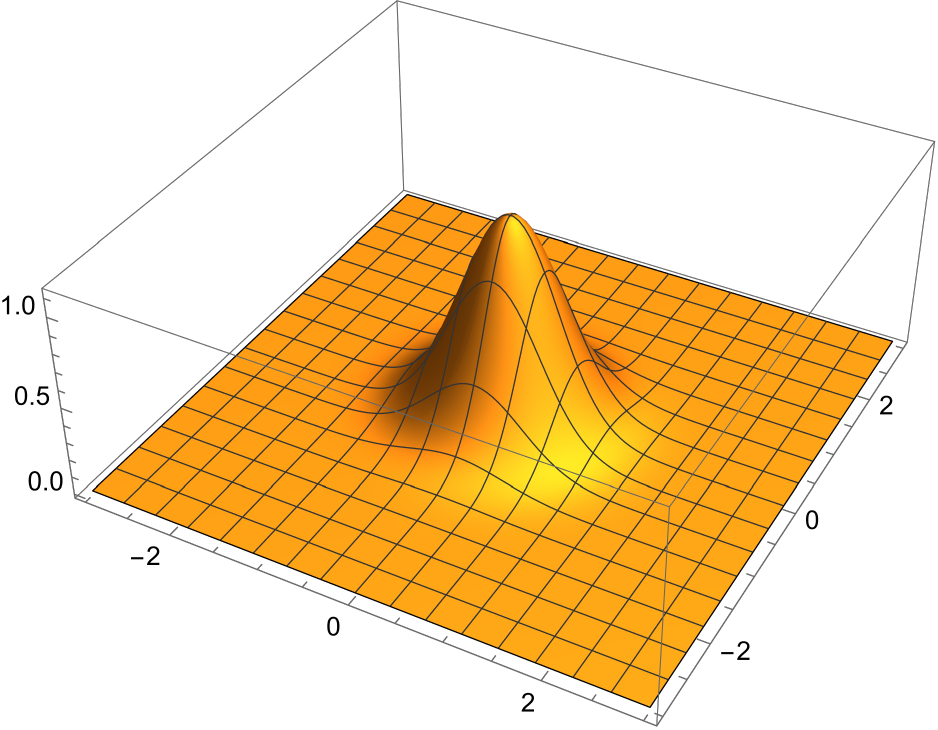}\quad
\includegraphics[width=5cm, height= 3cm]{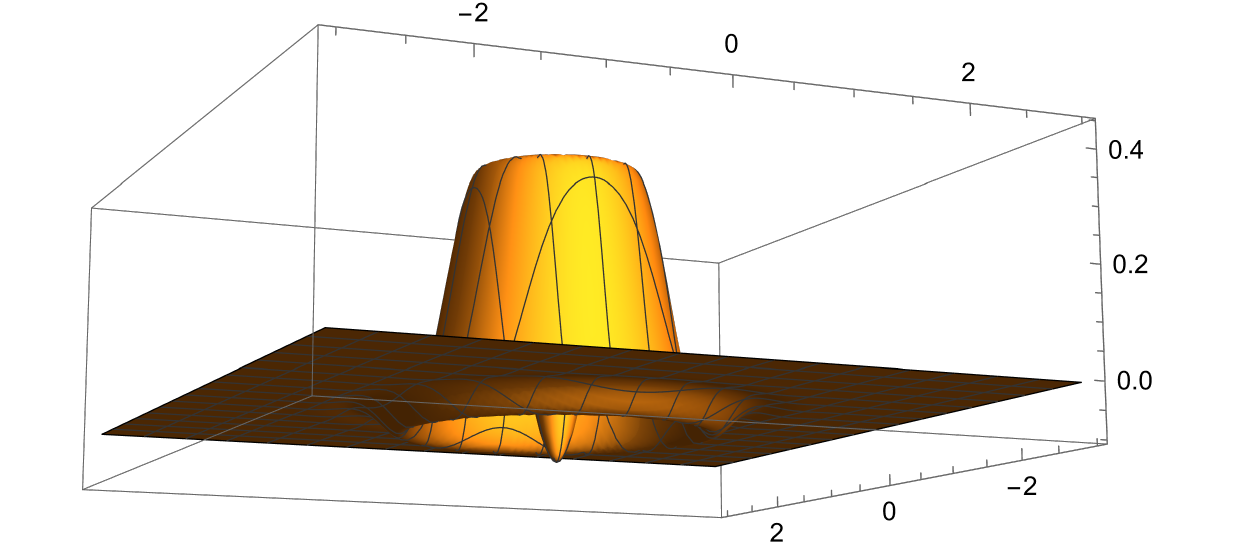}
\includegraphics[width=5cm, height= 3cm]{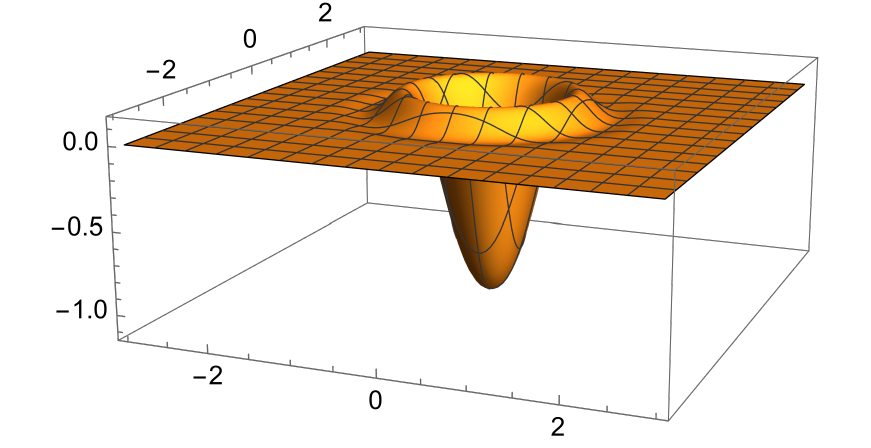}
\caption{The Fourier transform of the unit box function $\chi_{[-\frac12,\frac12]}\otimes\chi_{[-\frac12,\frac12]}$ (top), its absolute value, its real and imaginary parts under the FrFT with $\theta = \pi/3$ (middle), and the absolute value, real and imaginary part of the Gaussian  $e^{-t\cdot t}$ in $\R^2$ for $\theta = \pi/4$ (bottom).}\label{fig1}
\end{center}
\end{figure}
\end{example}

\begin{propn}
A semi-discrete system $\lc T_s^\theta \widecheck{g}_\lambda :s\in\R^n,\lil\rc$ is a semi-discrete frame for $L^2(\R^n)$ with bounds $C_1,C_2$ if and only if
\begin{equation}
\frac{C_1}{\abs{\sin\theta}^n}\leq\sum_{\lil}\abs{\F_\theta(g_\lambda)(\omega)}^2\leq \frac{C_2}{\abs{\sin\theta}^n},\quad\text{ for a.e. }\omega\in\R^n.
\end{equation}
\end{propn}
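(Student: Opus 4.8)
The plan is to conjugate the entire frame condition by the chirp modulation operator $\nch$, which is unitary on $\ltn$ (it is multiplication by the unimodular symbol $e^{-\pi i s^2\cot\theta}$, with inverse $\ch$). Under this conjugation the $\theta$-translations $T_s^\theta$ turn into ordinary translations $T_s$, so the semi-discrete frame condition for $\{T_s^\theta\widecheck g_\lambda\}$ collapses to a classical Littlewood--Paley (Calder\'on-type) condition, which can be read off by a Plancherel computation and then rewritten in terms of the FrFT. Concretely I would carry out three reductions: (i) express the frame sum for $\{T_s^\theta\widecheck g_\lambda\}$ as the frame sum for a classical system of translates of chirp-modulated atoms; (ii) characterize when that classical system is a semi-discrete frame by a pointwise a.e.\ bound on $\sum_{\lil}\abs{\widehat{\nch g_\lambda}}^2$; and (iii) convert that bound into the asserted condition on $\sum_{\lil}\abs{\F_\theta(g_\lambda)}^2$.

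For step (i), I would use the chirp-modulation intertwining relation obtained by a direct computation from Definition \ref{def: preli a translation} and \eqref{eq: chirp modulation} (compare \eqref{eq:T_xT_x^A}), namely $\nch(T_s^\theta f)=e^{-\pi i s^2\cot\theta}T_s(\nch f)$, together with the elementary identity $\nch\widecheck g_\lambda=\widecheck{\nch g_\lambda}$. Since $\nch$ is unitary and the scalar $e^{-\pi i s^2\cot\theta}$ is unimodular, this yields $\abs{\lng f,\,T_s^\theta\widecheck g_\lambda\rng}=\abs{\lng\nch f,\,T_s\widecheck{\nch g_\lambda}\rng}$ for every $s\in\R^n$ and every $\lil$. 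Summing over $\lambda$, integrating in $s$, and using that $\nch$ is an isometric bijection of $\ltn$ (so $\lnm\nch f\rnm=\lnm f\rnm$ and $\nch f$ runs over all of $\ltn$ as $f$ does), I conclude that $\{T_s^\theta\widecheck g_\lambda\}$ is a semi-discrete frame with bounds $C_1,C_2$ if and only if the classical system $\{T_s\widecheck{\nch g_\lambda}\}$ is a semi-discrete frame with the same bounds.

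For step (ii), writing $h_\lambda:=\nch g_\lambda$ and computing $\lng h,\,T_s\widecheck{h_\lambda}\rng=(h\star\overline{h_\lambda})(s)$ exactly as in \eqref{eq: sdf equiv 1}, I would apply Plancherel in the variable $s$ together with the classical convolution theorem, where $\widehat{\cdot}$ denotes the classical Fourier transform (the case $A=\{0,1,-1,0,0,0\}$ of \eqref{eq:saft def eq}), to obtain
\[
\sum_{\lil}\intn\abs{\lng h,\,T_s\widecheck{h_\lambda}\rng}^2\diff s=\intn\abs{\widehat h(\omega)}^2\Big(\sum_{\lil}\abs{\widehat{h_\lambda}(-\omega)}^2\Big)\diff\omega .
\]
A standard localization argument---testing the two resulting inequalities against $\widehat h=\chi_S$ for measurable sets $S$ of finite positive measure, which is legitimate since the Fourier transform is onto $\ltn$---then shows that the frame inequalities with bounds $C_1,C_2$ hold if and only if $C_1\leq\sum_{\lil}\abs{\widehat{h_\lambda}(\omega)}^2\leq C_2$ for a.e.\ $\omega$, the reflection $\omega\mapsto-\omega$ being a null-set-preserving bijection.

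Finally, for step (iii), I would read off from Definition \ref{def3.2} the factorization
\[
\F_\theta(g)(\omega)=\frac{e^{-\pi i\omega^2\cot\theta}}{\abs{\sin\theta}^{n/2}}\,\widehat{\nch g}(-\omega\csc\theta),\qquad\text{hence}\qquad\abs{\F_\theta(g)(\omega)}^2=\frac{1}{\abs{\sin\theta}^{n}}\,\abs{\widehat{\nch g}(-\omega\csc\theta)}^2 .
\]
Since $\omega\mapsto-\omega\csc\theta$ is a linear isomorphism of $\R^n$, and hence preserves null sets, substituting it into the pointwise condition from step (ii) rescales the bounds to $C_1/\abs{\sin\theta}^{n}$ and $C_2/\abs{\sin\theta}^{n}$, which is precisely the assertion. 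I expect the main obstacle to be exactly this bookkeeping: one must ensure that the particular chirp produced by the intertwining in step (i) (here $\nch$, dictated by Definition \ref{def: preli a translation}) is the same chirp that appears inside the FrFT in step (iii), so that $\ch$ versus $\nch$, the reflection coming from $\widecheck{}$, and the frequency rescaling by $\csc\theta$ all line up with one and the same pair of constants $C_1,C_2$; once these sign and scaling conventions are tracked consistently the three reductions compose to give the stated equivalence.
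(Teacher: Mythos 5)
Your proposal is correct, and it arrives at the stated condition by a route that is organized differently from the paper's. The paper stays inside the FrFT calculus: it expands $\lng T_s^\theta\widecheck{g}_\lambda,\,f\rng$ into a chirped ordinary convolution, substitutes $f\mapsto\nch^2 f$ so as to recognize the $\theta$-convolution $g_\lambda\star_\theta\bar f$, and then invokes Parseval's formula for the FrFT together with the $\theta$-convolution theorem \eqref{eq: conv F frft} to obtain the weighted inequality
$C_1\lnm\fft(f)\rnm^2\leq|\sin\theta|^{n}\sum_{\lil}\intn\abs{\fft(g_\lambda)(\omega)}^2\abs{\fft(f)(\omega)}^2\diff\omega\leq C_2\lnm\fft(f)\rnm^2$,
from which the a.e.\ bounds are read off. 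You instead conjugate the whole frame condition by the unitary chirp so that it becomes the \emph{classical} semi-discrete frame condition for $\lc T_s\widecheck{\nch g_\lambda}\rc$, settle that case with the ordinary Plancherel and convolution theorems plus an explicit localization (testing against $\widehat h=\chi_S$), and only at the end convert $\sum_{\lil}\abs{\widehat{\nch g_\lambda}}^2$ into $\abs{\sin\theta}^n\sum_{\lil}\abs{\fft(g_\lambda)}^2$ via the kernel factorization and a null-set-preserving change of variables. The two arguments are morally the same computation --- the FrFT tools the paper invokes are themselves proved by exactly your conjugation --- but yours is more elementary and self-contained (it needs no FrFT Parseval and no $\theta$-convolution theorem), and it makes explicit two points the paper compresses into a single ``Hence'': the localization step that upgrades the integral inequality to a pointwise a.e.\ inequality, and the measure-theoretic bookkeeping in passing from $\omega$ to $-\omega\csc\theta$.

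One caveat, which you flagged yourself and which is worth making precise: your intertwining $\nch(T_s^\theta f)=e^{-\pi i s^2\cot\theta}T_s(\nch f)$ is the one dictated by Definition \ref{def: preli a translation} as printed, whereas the paper's own identity \eqref{eq:T_xT_x^A} corresponds to the opposite sign convention $T_s^\theta f(t)=e^{-2\pi i s(t-s)\cot\theta}f(t-s)$, which is what the paper actually uses in its proof; similarly, the FrFT sign in Definition \ref{def3.2} is opposite to the one-dimensional definition in the introduction. Your argument goes through verbatim under either \emph{self-consistent} pairing of conventions, with $\ch$ and $\nch$ interchanged throughout, and your closing remark --- that the chirp produced by the intertwining must be the same chirp sitting inside the FrFT kernel --- is exactly the check that matters: with mismatched conventions the two chirps differ, and since $\abs{\widehat{\ch g}}$ and $\abs{\widehat{\nch g}}$ are not comparable in general, the asserted equivalence would genuinely fail, not merely acquire different constants.
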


\begin{proof}
Consider
\begin{align*}
\lng T_s^\theta \widecheck{g}_\lambda,\,f\rng &=\intn T_s^\theta \widecheck{g}_\lambda(t)\overline{f(t)}\diff t=\intn e^{-2\pi is(t-s)\cot\theta}g_\lambda(s-t)\overline{f(t)}\diff t\\
&=e^{\pi is^2\cot\theta}\intn \ch g_\lambda(s-t)\nch \overline{f(t)}\diff t.
\end{align*}
Thus,
\begin{align}\label{eq: sdf char 1}
\nonumber\lng T_s \widecheck{g}_\lambda,\,\nch^2f\rng&=e^{\pi is^2\cot\theta}\intn \ch g_\lambda(s-t)\nonumber\nch\overline{\nch^2f(t)}\diff t\\
\nonumber &=e^{\pi is^2\cot\theta}\intn \ch g_\lambda(s-t)\ch\overline{f(t)}\diff t\\
&=e^{\pi is^2\cot\theta}(\ch g_\lambda\star\ch\bar{f})(s)=|\sin\theta|^{n/2}e^{2\pi is^2\cot\theta}(g_\lambda\star_\theta\bar{f})(s),
\end{align}
using \eqref{eq:A-convolution, convolution}. 

Now, assume that $\lc T_s^\theta \widecheck{g}_\lambda :s\in\R^n,\lil\rc$ is a semi-discrete frame for $\ltn$ with bounds $C_1, C_2$. Then 
%there exists $0<m\leq M<\infty$ such that
\[
C_1\|f\|^2\leq\sum_{\lil}\intn\abs{\lng T_s^\theta \widecheck{g}_\lambda,\,f\rng}^2\diff s\leq C_2\|f\|^2,\quad\text{ for all } f\in\ltn.
\]
This is same as
\[
C_1\|f\|^2\leq\sum_{\lil}\intn\left|\lng T_s^\theta \widecheck{g}_\lambda,\, \nch^2f\rng\right|^2\diff s\leq C_2\|f\|^2,\quad\text{ for all } f\in\ltn.
\]
Now, using \eqref{eq: sdf char 1}, we obtain
\[
C_1\|f\|^2\leq\sum_{\lil}\intn|\sin\theta|^{n}\abs{g_\lambda\star_\theta \bar{f}(s)}^2\diff s\leq C_2\|f\|^2,\quad\text{ for all } f\in\ltn.
\]
This leads to
\[
C_1\|f\|^2\leq |\sin\theta|^{n}\sum_{\lil}\intn\abs{g_\lambda\star_\theta f(s)}^2\diff s\leq C_2\|f\|^2,\quad\text{ for all } f\in\ltn.
\]
Thus,
\[
C_1\lnm\fft(f)\rnm^2\leq|\sin\theta|^{n} \sum_{\lil}\intn\abs{\fft\lp g_\lambda\tc f\rp(\omega)}^2\diff\omega\leq C_2\lnm\fft(f)\rnm^2,\quad\text{ for all } f\in\ltn,
\]
by appealing to Parseval's formula for the FrFT. This implies that
\[
C_1\lnm\fft(f)\rnm^2\leq|\sin\theta|^{n}\sum_{\lil}\intn\abs{\fft(g_\lambda)(\omega)}^2\abs{\fft(f)(\omega)}^2\diff\omega\leq C_2\lnm\fft(f)\rnm^2,
\]
for all $f\in\ltn$, using \eqref{eq: conv F frft}. Hence,
\[
\frac{C_1}{\abs{\sin\theta}^n}\leq\sum_{\lil}\abs{\fft(g_\lambda)(\omega)}^2\leq\frac{C_2}{\abs{\sin\theta}^n},\quad\text{for a.e. }\omega\in\R^n.
\]
Retracing the steps back, we obtain the converse.
\end{proof}

Now, we introduce the deep convolutional neural network connected with the FrFT. We begin with the following definitions.

\begin{definition}\label{def: theta ms}
Let $\lc\Lambda_k\rc_{k\in\N}$ be a sequence of countable index sets. For $k\in\N$, let $\Psi_k^\theta=\{ T_s^\theta \widecheck{g}_{\lambda_k}:s\in\R^n,\,\lambda_k\in\Lambda_k\}$ be a collection of semi-discrete frames for $\ltn$. Further, assume that $\mkt:\ltn\to\ltn$ and $\pkt:\ltn\to\ltn$ be Lipschitz continuous operators such that 
\[
\mkt f=\pkt f=0, ~~\text{ for }f=0,\,k\in\N.
\] 
Then, the sequence of triples 
\[
\Omega^\theta:=\lc\lp\Psi_k^\theta,\,\mkt,\,\pkt\rp\rc
\]
is called a {\it $\theta$-module sequence}.
\end{definition}

%{\color{blue}\begin{remark}
%The generators $\glk,\,\lambda_k\in\Lambda_k$ of the semi-discrete frame $\Psi_k^\theta$ are called atoms.
%\end{remark}}

\begin{definition}
For $s>0$, we define the \textcolor{black}{{\it $\theta$-dilation operator}} connected with the FrFT as
\begin{equation}
D_s^\theta:=\nch D_s\ch.
\end{equation}
%We call the number $s$ as {\it pooling factor}.
\end{definition}

\begin{example}
The real and imaginary parts of the $\theta$-dilation of the unit box function $\chi_{[-\frac12,\frac12]}\otimes\chi_{[-\frac12,\frac12]}$ and the Gaussian $e^{-t\cdot t}$ are shown in Figure \ref{fig5} for $\theta = \pi/3$,  and $s = 2$.
\begin{figure}[h!]
\begin{center}
\includegraphics[width=5cm, height= 4cm]{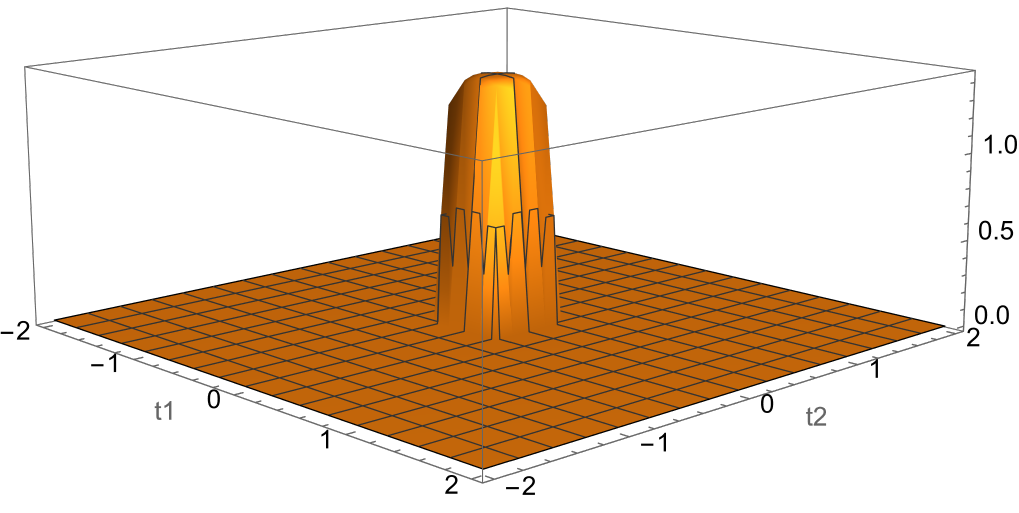}\hspace*{1cm}
\includegraphics[width=5cm, height= 4cm]{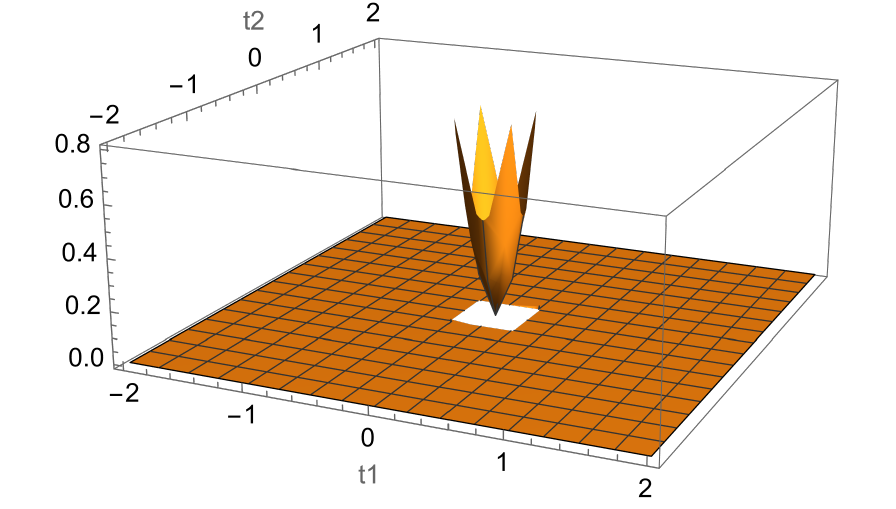}\\
\includegraphics[width=5cm, height= 4cm]{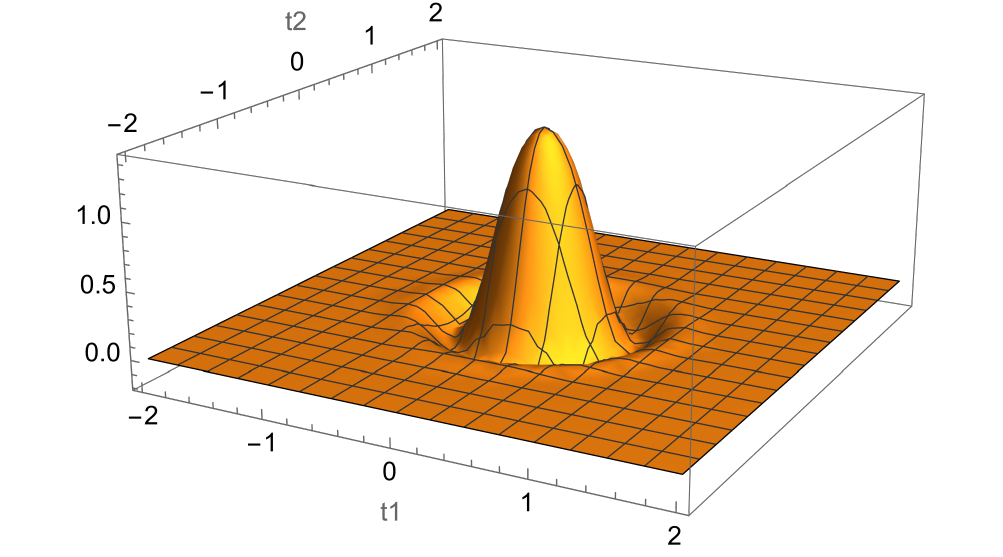}\hspace*{1cm}
\includegraphics[width=5cm, height= 4cm]{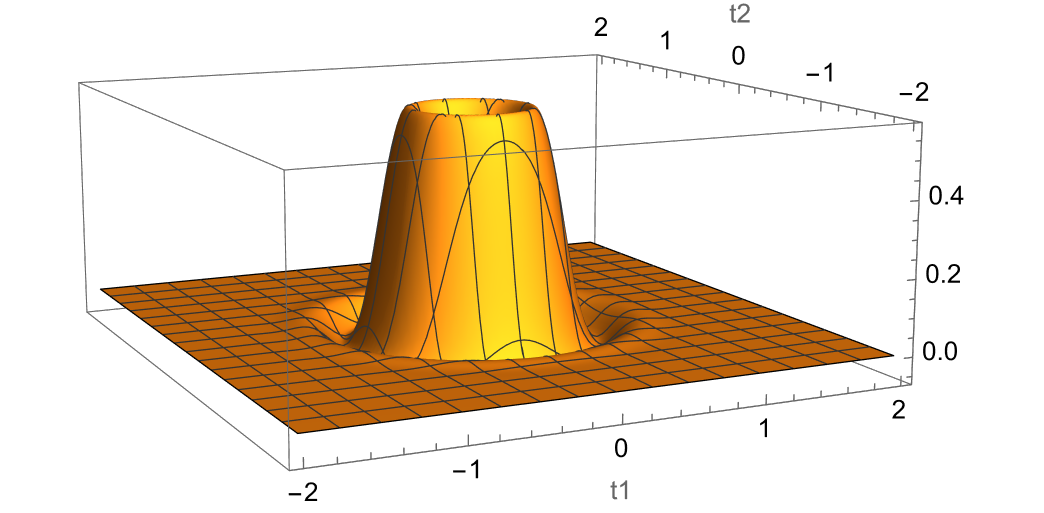}
\caption{The real and imaginary parts of the $\theta$-dilation of the unit box function $\chi_{[-\frac12,\frac12]}\times\chi_{[-\frac12,\frac12]}$ (top) and the Gaussian $e^{-t\cdot t}$  (bottom) for $\theta = \pi/3$,  and $s = 2$.}\label{fig5}
\end{center}
\end{figure}
\end{example}

In the following definition, we use a sequence of real numbers $\{s_k:s_k>1\}$ called {\it polling factors}. For simplicity in notation, we write $D_k^\theta$ instead of $D_{s_k}^\theta$.

\begin{definition}
Let $\Omega^\theta=\lc\lp\Psi_k^\theta,\,\mkt,\,\pkt\rp\rc$ be a $\theta$-module sequence. Assume that $\lc\glk:\lambda_k\in\Lambda_k\rc$ be the atoms of the semi-discrete frame $\Psi_k^\theta$ and $s_k>1$ be the pooling factors associated with the $k^{\mathrm{th}}$ network layer. Then, we define the operator $U_k^\theta$ associated to the $k^{\mathrm{th}}$ layer of the network as
\begin{align*}
&U_k^\theta:\Lambda_k\times\ltn\to\ltn,\\
&U_k^\theta(\lambda_k,\, f):=U^\theta[\lambda_k]f:=D_k^\theta\pkt\mkt\lp f\tc\glk\rp.
\end{align*}  
\end{definition} 

In order to see that the operator $U_k^\theta$ is well-defined, we consider
\begin{align}\label{eq: ukt well defined}
    \|U^\theta[\lambda_k]f\|^2\leq \lnm\pkt\mkt\lp f\tc\glk\rp\rnm^2\leq R_k^2 L_k^2\lnm f\tc\glk\rnm^2\leq B_kR_k^2 L_k^2\|f\|^2,
\end{align}
where $B_k$ is an upper frame bound for the semi-discrete frame $\lc T_s^\theta \glk:s\in\R^n,\,\lambda_k\in\Lambda_k\rc$ and $L_k, R_k$ are the Lipschitz constants for $\mkt$ and $\pkt$, respectively. 

% \begin{reamrk}
% Using the fact that $\pkt$ and $\mkt$ are Lipschitz continuous and $\mkt f=\pkt f=0, ~~\text{ for }f=0,\,k\in\N$, it is easy to see that the operator $U_k^\theta$ is well defined. In particular, one can show that
% \begin{equation}
%     , 
% \end{equation}
% where $B_k$ is an upper frame bound for the semi-discrete frame 
% \end{reamrk}

For $1\leq k<\infty$, define the set 
\[
\Lambda^k:=\Lambda_1\times\cdots\times\Lambda_k. 
\]
An ordered tuple $q=(\lambda_1,\cdots,\lambda_k)\in\Lambda^k$ is called a {\it path}.
%\par Using the similar ideas as in the classical case, one can show that the operator $U_k^\theta$ is well-defined. 

\begin{definition}
    Let $\mathcal{Q}:=\bigcup\limits_{k=1}^\infty\Lambda^k$. We define the operator $U^\theta:\mathcal{Q}\times \ltn\to\ltn$ by
    \[
    U^\theta[q]f :=U^\theta[\lp \lambda_1,\cdots,\lambda_k\rp]f :=U_k^\theta[\lambda_k]\cdots U_1^\theta[\lambda_1]f,
    \]
    where $q=\lp \lambda_1,\cdots,\lambda_k\rp\in\Lambda^k$ and $f\in\ltn$.
\end{definition}

Appealing to \eqref{eq: ukt well defined}, it is easy to see that the operator $U^\theta$ is well-defined. More precisely,
\begin{equation}\label{eq: U theta well defined}
    \|U^\theta[q]f\|^2\leq \prod_{j=1}^k(B_jL_j^2R_j^2)\|f\|^2,\quad q=(\lambda_1,\cdots, \lambda_k).
\end{equation}
 
%The operator $U^\theta$ is well defined as each operator $U^\theta_j$ is well-defined.

\begin{definition}
    Let $\Omega^\theta=\lc\lp\Psi_k^\theta,\,\mkt,\,\pkt\rp\rc$ be a $\theta$-module sequence. Then, the feature extractor $\Phi_{\Omega^\theta}^\theta$ based on $\Omega^\theta$ maps $f\in\ltn$ to its feature vector 
    \[
    \Phi^\theta_{\Omega^\theta}:=\bigcup_{k=0}^\infty \Phi_{\Omega^\theta}^{k,\theta}(f),
    \]
    where 
    \[
    \Phi^{k,\theta}_{\Omega^\theta}(f):=\lc \lp U^\theta[q]f\rp \star_\theta \varphi_k:q\in\Lambda^k\rc
    \]
    and $\varphi_k$ is the output generating atom at the $k$-th network level. 
\end{definition}

\begin{propn}\label{pro: f.e. well defined}
    Let $\Omega^\theta=\lc\lp\Psi_k^\theta,\,\mkt,\,\pkt\rp\rc$ be a $\theta$-module sequence. Let $B_k^\theta$ be an upper frame bound for $\Psi_k^\theta$ and $L_k^\theta,~R_k^\theta$ be the Lipschitz constants for $\mkt$ and $\pkt$, respectively. If 
    \[
    \max\lc B_k^\theta,~B_k^\theta\lp L_k^\theta\rp^2\lp R_k^\theta\rp^2\rc\leq 1,~~\text{ for all }k\in\N,
    \]
    then the feature extractor $\Phi^\theta_{\Omega^\theta}:\ltn\to\lp\ltn\rp^{\mathcal{Q}}$ is well defined.
\end{propn}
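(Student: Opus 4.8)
The plan is to show that the aggregated energy of the feature vector is finite, in fact bounded by $\|f\|^2$; this is precisely what well-definedness of $\Phi^\theta_{\Omega^\theta}:\ltn\to(\ltn)^{\mathcal{Q}}$ amounts to. Concretely, writing
\[
W_k:=\sum_{q\in\Lambda^k}\|U^\theta[q]f\|^2,\qquad E_k:=\sum_{q\in\Lambda^k}\|U^\theta[q]f\star_\theta\varphi_k\|^2,
\]
with the convention that the empty path gives $U^\theta[q]f=f$ and hence $W_0=\|f\|^2$, I would establish the single-layer energy estimate
\[
E_k+W_{k+1}\leq\max\lc B_{k+1}^\theta,\,B_{k+1}^\theta (L_{k+1}^\theta)^2(R_{k+1}^\theta)^2\rc W_k
\]
and then telescope it against the admissibility hypothesis.

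For the single-layer estimate I would fix a path $q\in\Lambda^k$ and set $h:=U^\theta[q]f$. The key structural observation is that the output generating atom $\varphi_k$ is the distinguished atom $g_{\lambda_{k+1}^*}$ of the frame $\Psi_{k+1}^\theta$ (the output atom at level $k$ lives in the frame of level $k+1$), so a single application of the upper frame bound $B_{k+1}^\theta$ of $\Psi_{k+1}^\theta$ simultaneously controls the output term and all propagated terms:
\[
\|h\star_\theta\varphi_k\|^2+\sum_{\lambda_{k+1}\in\Lambda_{k+1}}\|h\star_\theta g_{\lambda_{k+1}}\|^2\leq B_{k+1}^\theta\|h\|^2 .
\]
On the propagation side, the middle inequality of \eqref{eq: ukt well defined} gives, for each $\lambda_{k+1}$, the bound $\|U_{k+1}^\theta[\lambda_{k+1}]h\|^2\leq (R_{k+1}^\theta)^2(L_{k+1}^\theta)^2\,\|h\star_\theta g_{\lambda_{k+1}}\|^2$, so that after summing over $\lambda_{k+1}$ and adding $\|h\star_\theta\varphi_k\|^2$ one pulls out the factor $\max\{1,(R_{k+1}^\theta)^2(L_{k+1}^\theta)^2\}$ and invokes the displayed frame inequality to obtain
\[
\|h\star_\theta\varphi_k\|^2+\sum_{\lambda_{k+1}\in\Lambda_{k+1}}\|U_{k+1}^\theta[\lambda_{k+1}]h\|^2\leq\max\lc B_{k+1}^\theta,\,B_{k+1}^\theta(R_{k+1}^\theta)^2(L_{k+1}^\theta)^2\rc\|h\|^2 .
\]
Summing this over $q\in\Lambda^k$ and recognizing the left-hand side as $E_k+W_{k+1}$ yields the single-layer estimate.

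Finally, under the hypothesis $\max\{B_k^\theta, B_k^\theta(L_k^\theta)^2(R_k^\theta)^2\}\leq 1$ the single-layer estimate collapses to $E_k\leq W_k-W_{k+1}$. Summing from $k=0$ to $K$ telescopes to $\sum_{k=0}^K E_k\leq W_0-W_{K+1}\leq\|f\|^2$, and letting $K\to\infty$ gives $\sum_{k=0}^\infty\sum_{q\in\Lambda^k}\|U^\theta[q]f\star_\theta\varphi_k\|^2\leq\|f\|^2<\infty$, i.e. the feature vector lies in $(\ltn)^{\mathcal{Q}}$, which is the asserted well-definedness. I expect the main obstacle to be the bookkeeping of the output generating atom: one must correctly identify $\varphi_k$ as an element of $\Psi_{k+1}^\theta$ so that the level-$(k+1)$ frame bound bundles together the output energy $E_k$ and the energy propagated into $W_{k+1}$. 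The second delicate point is the absorption of the Lipschitz constants through the factor $\max\{1,(R_{k+1}^\theta)^2(L_{k+1}^\theta)^2\}$ — which is exactly why the hypothesis is phrased as a maximum — while the unitarity/non-expansiveness of $D^\theta$ (already used in \eqref{eq: ukt well defined}) and the telescoping are routine.
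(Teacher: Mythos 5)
Your proposal is correct and is precisely the argument the paper invokes: the paper omits the proof, deferring to Proposition~1 of \cite{Wiatowski}, and your telescoping of the single-layer estimate $E_k+W_{k+1}\leq\max\lc B_{k+1}^\theta,\,B_{k+1}^\theta(L_{k+1}^\theta)^2(R_{k+1}^\theta)^2\rc W_k$ (obtained by identifying $\varphi_k$ with the distinguished atom of $\Psi_{k+1}^\theta$ and absorbing the Lipschitz constants via the frame bound) is exactly that argument transplanted to the $\star_\theta$ setting, consistent with the paper's own inequality \eqref{eq: ukt well defined}.
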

\begin{proof}
    The proof is similar to that of Proposition 1 in \cite{Wiatowski} and therefore omitted.
\end{proof}

\begin{remark}
	The claim of Proposition \ref{pro: f.e. well defined} is established by showing $\Phi^\theta_{\Omega^\theta}(f)\in \lp\ltn\rp^{\mathcal{Q}}$ for all $f\in\ltn$, where the space $\lp\ltn\rp^{\mathcal{Q}}$ is equipped with the following norm:
	\[
	\vvvert\{f_q\}\vvvert^2:=\sum_{q\in\Lambda_k}\|f_q\|_{L^2(\R^n)}^2.
	\]  
\end{remark}

\begin{definition}
    Let $\Omega^\theta=\lc\lp\Psi_k^\theta,\,\mkt,\,\pkt\rp\rc$ be a $\theta$-module sequence. Let $B_k^\theta$ be an upper frame bound for $\Psi_k^\theta$ and $L_k^\theta$, $R_k^\theta$ be the Lipschitz constants for $\mkt$ and $\pkt$, respectively. Then, the condition
    \begin{equation}\label{eq: admissible}
    \max\lc B_k^\theta,~B_k^\theta\lp L_k^\theta\rp^2\lp R_k^\theta\rp^2\rc\leq 1,~~\text{ for all }k\in\N,
    \end{equation}
    is called the {\it admissibility condition} and the corresponding module sequence $\Omega^\theta$ is called an {\it admissible $\theta$-module sequence}.
\end{definition}

%\subsection{Vertical translation invariance}
Now, we are ready to discuss the translation invariance properties of the network defined above. Towards this end, we prove the following two lemmas.
%The remaining part of this section is devoted to study the 

\begin{lemma}
    We have the following equalities.
    \begin{itemize} 
    \item[(i)] If $f, g\in \ltn$, then 
	\begin{equation}\label{eq: translation convolution}    
    T_t^\theta(f\star_\theta g)=(T_t^\theta f)\star_\theta g.
    \end{equation}
    \item[(ii)] For $t\in\R^n$ and $s\neq 0$, one has
    \begin{equation}\label{eq: ds tt theta}
    e^{-\pi it^2\cot\theta}D_s^\theta T_t^\theta=e^{-\pi i\frac{t^2}{s^2}\cot\theta}T^\theta_{t/s}D_s^\theta f.
    \end{equation}
    \end{itemize}
\end{lemma}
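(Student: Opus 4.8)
The plan is to prove both identities by conjugating with the chirp modulation operator $\ch$, thereby reducing each $\theta$-operator identity to its classical counterpart, which is elementary. The two facts I would lean on throughout are the intertwining relation \eqref{eq:T_xT_x^A}, namely $\ch(T_t^\theta f)=e^{\pi it^2\cot\theta}T_t(\ch f)$, together with the observation that $\ch$ and $\nch$ are mutually inverse multiplication operators, so that $\ch\nch=\nch\ch=\mathrm{Id}$ and $\ch$ is injective on $\ltn$.

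For part (i), I would apply $\ch$ to both sides and compare. On the left, \eqref{eq:T_xT_x^A} followed by the convolution identity \eqref{eq:A-convolution, convolution} gives
\[
\ch\big(T_t^\theta(f\tc g)\big)=e^{\pi it^2\cot\theta}T_t\big(\ch(f\tc g)\big)=\frac{e^{\pi it^2\cot\theta}}{|\sin\theta|^{n/2}}\,T_t(\ch f\star\ch g).
\]
On the right, \eqref{eq:A-convolution, convolution} and then \eqref{eq:T_xT_x^A} give
\[
\ch\big((T_t^\theta f)\tc g\big)=\frac{1}{|\sin\theta|^{n/2}}\big(\ch(T_t^\theta f)\star\ch g\big)=\frac{e^{\pi it^2\cot\theta}}{|\sin\theta|^{n/2}}\big((T_t\ch f)\star\ch g\big).
\]
The two right-hand sides coincide by the classical commutation $T_t(\phi\star\psi)=(T_t\phi)\star\psi$, and since $\ch$ is injective, the two original expressions are equal. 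One could alternatively verify (i) by a direct change of variables $u=s-t$ in the defining integral \eqref{eq:A-convolution}, checking that the accumulated chirp phases $t\cdot(x-t)+u\cdot(x-t-u)$ and $t\cdot u+(u+t)\cdot(x-u-t)$ agree; the chirp conjugation route is cleaner.

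For part (ii), I would first rewrite each $\theta$-operator in conjugated form. From \eqref{eq:T_xT_x^A} one has $T_t^\theta=e^{\pi it^2\cot\theta}\nch T_t\ch$, and by definition $D_s^\theta=\nch D_s\ch$. Composing and using $\ch\nch=\mathrm{Id}$ to cancel the inner chirps yields
\[
D_s^\theta T_t^\theta=e^{\pi it^2\cot\theta}\,\nch D_s T_t\ch.
\]
Now I would invoke the classical dilation--translation relation $D_sT_t=T_{t/s}D_s$, insert $\ch\nch=\mathrm{Id}$ between $T_{t/s}$ and $D_s$, and recognize the two resulting conjugated blocks as $\nch T_{t/s}\ch=e^{-\pi i(t^2/s^2)\cot\theta}T_{t/s}^\theta$ and $\nch D_s\ch=D_s^\theta$. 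Collecting the phase factors gives
\[
D_s^\theta T_t^\theta=e^{\pi it^2\cot\theta}e^{-\pi i\frac{t^2}{s^2}\cot\theta}\,T_{t/s}^\theta D_s^\theta,
\]
which is the claimed identity after moving $e^{-\pi it^2\cot\theta}$ to the left.

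The computational heart is entirely in part (ii), and the only place where care is needed is the bookkeeping of the quadratic chirp phases: each application of \eqref{eq:T_xT_x^A} contributes a factor $e^{\pi i\,(\cdot)^2\cot\theta}$ whose argument must be tracked through the dilation, which rescales $t\mapsto t/s$ and hence $t^2\mapsto t^2/s^2$. I expect this phase accounting, rather than any analytic subtlety, to be the main obstacle; the classical commutations $T_t(\phi\star\psi)=(T_t\phi)\star\psi$ and $D_sT_t=T_{t/s}D_s$ are standard, and the operators $\ch,\nch$ are bounded multiplications, so no convergence issues arise for $f,g\in\ltn$.
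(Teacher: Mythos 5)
Your proof is correct and follows essentially the same route as the paper's: both parts reduce the $\theta$-identities to the classical commutations $T_t(\phi\star\psi)=(T_t\phi)\star\psi$ and $D_sT_t=T_{t/s}D_s$ by conjugating with the chirp $\ch$, using \eqref{eq:T_xT_x^A}, \eqref{eq:A-convolution, convolution}, and $\ch\nch=\mathrm{Id}$, exactly as in the paper. If anything, your write-up is slightly more careful than the paper's (you keep the $|\sin\theta|^{n/2}$ constant explicit and invoke the injectivity of $\ch$ to remove the chirp at the end), but the argument is the same.
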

\begin{proof}
(i) Consider
\begin{align*}
\ch((T_t^\theta f)\star_\theta g)=\ch(T_t^\theta f)\star\ch g=e^{\pi it^2\cot\theta}(T_t(\ch f))\star \ch g= e^{\pi it^2\cot\theta} T_t(\ch f\star\ch g)=T_t^\theta(f\star_\theta g),
\end{align*}    
where we used \eqref{eq:A-convolution, convolution}, \eqref{eq:T_xT_x^A}, and $T_t(f\star g)=T_tf\star g$, which is easy to verify. Similarly, the other equality follows.\\
  (ii)  Consider
    \begin{align*}
        D_s^\theta T_t^\theta f&=\nch D_s\ch T_t^\theta f=e^{\pi it^2\cot\theta}\nch D_s T_t\ch f=e^{\pi it^2\cot\theta}\nch T_{t/s}D_s\ch f\\
        &=e^{\pi it^2\cot\theta}\nch T_{t/s}\ch\nch D_s\ch f\\
        &=e^{\pi it^2\cot\theta}e^{-\pi i\frac{t^2}{s^2}\cot\theta}T_{t/s}^\theta D_s^\theta f,
    \end{align*}
    where \eqref{eq:T_xT_x^A} and $D_sT_t=T_{t/s}D_s$ were used.
\end{proof}

\begin{lemma}
     Let $\Omega^\theta=\lc\lp\Psi_k^\theta,\,\mkt,\,\pkt\rp\rc$ be a $\theta$-module sequence. Assume that $\lc\glk:\lambda_k\in\Lambda_k\rc$ be the atoms of the frame $\Psi_k^\theta$ and $s_k\geq 1$ be the pooling factors associated to the $k^{\mathrm{th}}$ network layer. For $t\in\R^n$, assume that $T_t^\theta$ commutes with $\mkt$ and $\pkt$, for $1\leq k\leq N$. Then, for $q\in \Lambda^N$, we have
    \begin{equation}\label{eq: lemma 2}
    U^\theta[q]T_t^\theta f=e^{\pi iNt^2\cot\theta}e^{-\pi it^2\lp 1/s_1^2+\cdots+1/s_N^2\rp\cot\theta}U^\theta[q]f.
    \end{equation}
\end{lemma}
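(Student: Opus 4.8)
The plan is to induct on the length $N$ of the path $q=(\lambda_1,\cdots,\lambda_N)$, stripping off one layer of the network at a time and transporting the $\theta$-translation from the innermost argument to the outside by means of the two commutation identities just established, together with the hypothesis that $T_t^\theta$ commutes with each $\mkt$ and $\pkt$. For the base case $N=1$ I would begin from the definition $U^\theta[\lambda_1]T_t^\theta f=D_1^\theta P_1^\theta M_1^\theta\bigl((T_t^\theta f)\tc g_{\lambda_1}\bigr)$ and first apply \eqref{eq: translation convolution} to rewrite the argument of the nonlinearity as $T_t^\theta(f\tc g_{\lambda_1})$; the assumed commutation of $T_t^\theta$ with $M_1^\theta$ and $P_1^\theta$ then slides the $\theta$-translation to the left of both Lipschitz operators, and a single application of \eqref{eq: ds tt theta} with $s=s_1$ carries it through the $\theta$-dilation $D_1^\theta$. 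The net effect at this first layer is to emit the unimodular factor $e^{\pi i t^2\cot\theta}e^{-\pi i(t/s_1)^2\cot\theta}$ and to convert $T_t^\theta$ into the rescaled $\theta$-translation $T_{t/s_1}^\theta$.

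For the inductive step I would factor $U^\theta[q]=U_N^\theta[\lambda_N]\circ U^\theta[q']$ with $q'=(\lambda_1,\cdots,\lambda_{N-1})$, feed the induction hypothesis for $U^\theta[q']T_t^\theta f$ into the outermost layer, and repeat the same three moves on layer $N$: convolution commutation via \eqref{eq: translation convolution}, commutation past $M_N^\theta$ and $P_N^\theta$, and the dilation identity \eqref{eq: ds tt theta}. The key point is that the $\theta$-translation reaching the outer layer is no longer $T_t^\theta$ but the contracted operator produced by the inner layers, so \eqref{eq: ds tt theta} is invoked with this smaller argument; each layer thereby sheds its own phase and contracts the translation once more by its pooling factor. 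Iterating across all $N$ layers and collecting the resulting unimodular factors then yields the scalar prefactor appearing in \eqref{eq: lemma 2}.

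The step I expect to be the main obstacle is the accounting of these phase factors and, in particular, their interaction with the non-linear operators. Since $\mkt$ and $\pkt$ are only assumed Lipschitz and to commute with $\theta$-translations -- but not a priori with multiplication by a unimodular constant -- the scalar phase generated at one layer cannot simply be pushed through the nonlinearity of the next; one must either verify that the module operators are equivariant under multiplication by unimodular scalars, or keep each phase tethered to its translation and treat the product $c\,T_{s}^\theta$ as a single unit until it is released at the final $\theta$-dilation. Tracking how the translation argument contracts by the cumulative pooling factor $s_1\cdots s_k$ at each stage, and confirming that the emitted phases combine into exactly the constant in \eqref{eq: lemma 2}, is the delicate part of the argument; by contrast, the convolution and commutation manipulations themselves are routine once the preceding lemma is available.
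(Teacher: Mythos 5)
Your strategy coincides with the paper's: prove the one--layer identity
\[
U_k^\theta[\lambda_k]T_t^\theta f=e^{\pi it^2\cot\theta}e^{-\pi i(t^2/s_k^2)\cot\theta}\,T^\theta_{t/s_k}U_k^\theta[\lambda_k]f
\]
from \eqref{eq: translation convolution}, the commutation hypothesis, and \eqref{eq: ds tt theta}, and then iterate across layers (the paper's proof is exactly this computation followed by ``we arrive at the required equality''). But the two halves of your inductive step are incompatible, and this is a genuine gap. As you correctly insist, the translation reaching layer $k$ is the contracted one, $T^\theta_{u_{k-1}}$ with $u_{k-1}:=t/(s_1\cdots s_{k-1})$, so \eqref{eq: ds tt theta} must be invoked there with argument $u_{k-1}$; layer $k$ then emits the phase $e^{\pi i(u_{k-1}^2-u_k^2)\cot\theta}$, and these phases \emph{telescope}: their product over $k=1,\dots,N$ is $e^{\pi it^2\cot\theta}e^{-\pi it^2\cot\theta/(s_1^2\cdots s_N^2)}$. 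Moreover the residual translation $T^\theta_{t/(s_1\cdots s_N)}$ survives on the right-hand side; nothing in the argument can remove it. Thus the induction you describe terminates at
\[
U^\theta[q]T_t^\theta f=e^{\pi it^2\cot\theta}e^{-\pi it^2\cot\theta/(s_1^2\cdots s_N^2)}\,T^\theta_{t/(s_1\cdots s_N)}U^\theta[q]f,
\]
which is not \eqref{eq: lemma 2}: the prefactor $e^{\pi iNt^2\cot\theta}e^{-\pi it^2(1/s_1^2+\cdots+1/s_N^2)\cot\theta}$ claimed there arises only if one feeds the \emph{original} $t$, rather than $u_{k-1}$, into the one-layer identity at every layer, which contradicts the very contraction of the translation that produces the factor $T^\theta_{t/s_k}$ in the first place (the two expressions already differ for $N=2$, $s_1=s_2=2$). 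So your closing assertion that collecting the factors ``yields the scalar prefactor appearing in \eqref{eq: lemma 2}'' is false, and the induction cannot be completed as stated. This is not a defect you introduced: the paper's own unwritten iteration commits exactly this substitution, and the translation missing from the right-hand side of \eqref{eq: lemma 2} silently reappears as $T^\theta_{t/s_1\cdots s_k}$ in the first display of the proof of Theorem \ref{th: theorem 1}, so the statement being proved is itself defective, not merely your route to it.

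The second obstacle you flag is also real and is resolved neither in your proposal nor in the paper. To move the phase emitted by layer $k$ past the operators of layer $k+1$ one needs $\mathcal{M}_{k+1}^\theta$ and $\mathrm{P}_{k+1}^\theta$ to intertwine with multiplication by unimodular constants; this follows neither from Lipschitz continuity nor from commutation with $\theta$-translations, and keeping the product $c\,T^\theta_s$ tethered as a single unit does not help, since the commutation hypothesis concerns $T^\theta_s$ alone. In the same vein, the hypothesis grants commutation with $T_t^\theta$ for the given $t$ only, while the induction needs it for every contracted translation $T^\theta_{t/(s_1\cdots s_{k-1})}$. A correct version of the lemma must add both as hypotheses, and its conclusion must carry the telescoped phase and the residual translation displayed above; the error bounds in Theorem \ref{th: theorem 1} and Corollary \ref{cor: translation invariance}, which are built on the additive phase of \eqref{eq: lemma 2}, would have to be reworked accordingly.
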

\begin{proof}
    For $\lambda_k\in\Lambda_k$, consider
    \begin{align*}
      U_k^\theta[\lambda_k]T_t^\theta f&=D_k^\theta\pkt\,\mkt\lp T_t^\theta f\star_\theta \glk\rp=D_k^\theta \pkt\mkt T_t^\theta\lp f\star_\theta \glk\rp=D_k^\theta T_t^\theta \pkt\mkt\lp f\star_\theta\glk\rp\\
      &=e^{\pi it^2\cot\theta}e^{-\pi i(t^2/s_k^2)\cot\theta}T^\theta_{t/s_k}D_k^\theta\pkt\mkt\lp f\tc\glk\rp\\
      &=e^{\pi it^2\cot\theta}e^{-\pi i(t^2/s_k^2)\cot\theta}T^\theta_{t/s_k}U_k^\theta[\lambda_k]f,
    \end{align*}
    where we used \eqref{eq: translation convolution} and \eqref{eq: ds tt theta}. Thus, for $q\in\Lambda^N$, writing \[U^\theta[q]T_t^\theta f=U_N^\theta[\lambda_N]\cdots U_1^\theta[\lambda_1]T_t^\theta f,\]
    we arrive at the required equality.
\end{proof}

\begin{theorem}\label{th: theorem 1}
    Let $\Omega^\theta=\lc (\Psi_k^\theta,\mkt,\pkt)\rc$ be an admissible $\theta$-module sequence with pooling factors $s_k\geq 1$. Let the operators $\mkt$ and $\pkt$ commute with $T_t^\theta$. Assume that the output generating atoms satisfy
    \begin{equation}\label{eq: th1 decay}
    |\omega\fft(\varphi_k)(\omega)|<K_1,
    \end{equation}
    for a.e. $\omega\in\R^n$, for all $k\in\N$, and for some $K_1>0$. Then, there exists a $K>0$ such that
    \begin{align}\label{eq: th 1 estimate}
    \nonumber&|\|e^{-\pi ikt^2\cot\theta}\Phi_{\Omega^\theta}^{k,\theta}(T_t^\theta f)-\Phi^{k,\theta}_{\Omega^\theta}(f)\||^2\\
    \nonumber\leq &\Big[\frac{t^4}{s_1^4\cdots s_k^4}\cot^2\theta+t^4\lp\frac{1}{s_1^2}+\cdots+\frac{1}{s_k^2}\rp^2\cot^2\theta+\frac{4t^2}{s_1^2\cdots s_k^2}\csc^2\theta\\
    &+4\frac{|t|^{3}}{(s_1\cdots s_k)^3}|\cot\theta\cdot\csc\theta|+4\frac{|t|^{3}}{s_1\cdots s_k}\lp\frac{1}{s_1^2}+\cdots+\frac{1}{s_k^2}\rp|\cot\theta\cdot\csc\theta|\Big]\pi^2 K^2\|f\|^2.
    \end{align}
\end{theorem}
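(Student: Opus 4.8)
The plan is to expand the feature-vector norm as a sum over paths and reduce, layer by layer, to a single frequency-domain estimate. With $|\|\cdot\||^2=\sum_{q\in\Lambda^k}\|\cdot\|^2$, the quantity to control is, for each path $q\in\Lambda^k$, the summand $\|e^{-\pi ikt^2\cot\theta}(U^\theta[q]T_t^\theta f)\tc\varphi_k-(U^\theta[q]f)\tc\varphi_k\|^2$. Iterating the single-layer commutation relation underlying \eqref{eq: lemma 2} --- which combines \eqref{eq: translation convolution} with the dilation--translation identity \eqref{eq: ds tt theta} --- I would propagate $T_t^\theta$ through the $k$ layers of $U^\theta[q]$. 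This produces the accumulated phase $e^{\pi ikt^2\cot\theta}$ (which the prefactor $e^{-\pi ikt^2\cot\theta}$ is built to cancel), a residual scalar chirp $e^{-i\nu}$ with $\nu=\pi t^2\big(\sum_j s_j^{-2}\big)\cot\theta$, and, crucially, a surviving $\theta$-translation $T^\theta_{t/(s_1\cdots s_k)}$ acting on $U^\theta[q]f$ --- the very obstruction to invariance. Pulling this translation through the $\theta$-convolution with $\varphi_k$ by \eqref{eq: translation convolution}, each summand collapses to $\|e^{-i\nu}\,T^\theta_{t/(s_1\cdots s_k)}F_q-F_q\|^2$, where $F_q:=(U^\theta[q]f)\tc\varphi_k$.

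Next I would pass to the FrFT side; set $\mu:=t/(s_1\cdots s_k)$. By Parseval's formula for $\fft$ the summand equals $\|e^{-i\nu}\fft(T^\theta_\mu F_q)-\fft F_q\|^2$, and the translation--modulation relation \eqref{eq: preli translation modulation} rewrites $\fft(T^\theta_\mu F_q)$ as $M_{-\mu}^\theta\fft F_q$. The chirp $e^{\pi i\mu^2\cot\theta}$ inside $M_{-\mu}^\theta$ merges with $e^{-i\nu}$ into a single unimodular multiplier $e^{i\Theta(\omega)}$, where $\Theta(\omega)=A+B\cdot\omega$ is affine in $\omega$: the constant $A=\pi\cot\theta\big(\mu^2-t^2\sum_j s_j^{-2}\big)$ gathers the chirp/phase contributions, while the linear part $B\cdot\omega=-2\pi\csc\theta\,\mu\cdot\omega$ comes from the modulation. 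Hence the summand becomes $\intn|e^{i\Theta(\omega)}-1|^2\,|\fft F_q(\omega)|^2\,\diff\omega$, and the $\theta$-convolution theorem \eqref{eq: conv F frft} factors $|\fft F_q(\omega)|^2=|\fft(U^\theta[q]f)(\omega)|^2\,|\fft\varphi_k(\omega)|^2$.

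The estimate then rests on $|e^{i\Theta}-1|\le|\Theta|$ together with $|\Theta|^2=|A+B\cdot\omega|^2\le A^2+2|A|\,|B\cdot\omega|+|B\cdot\omega|^2$. I would bound the three arising integrals with a single constant $K$ dominating both $\sup_\omega|\fft\varphi_k(\omega)|$ and the decay constant $K_1$ of \eqref{eq: th1 decay}: the $A^2$-integral via $|\fft\varphi_k|\le K$, the $|B\cdot\omega|^2$-integral via $|\omega|^2|\fft\varphi_k|^2\le K^2$ (from \eqref{eq: th1 decay}), and the mixed integral via $|\omega|\,|\fft\varphi_k|^2=|\fft\varphi_k|\cdot|\omega\,\fft\varphi_k|\le K^2$; Parseval then closes each back to $\|U^\theta[q]f\|^2$. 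Finally, the elementary inequalities $(x-y)^2\le x^2+y^2$ and $|x-y|\le x+y$ with $x=(s_1\cdots s_k)^{-2}$ and $y=\sum_j s_j^{-2}$ split $A^2$ into the first two bracketed terms and $2|A|\,|B|$ into the last two, while $|B|^2$ yields the $\csc^2\theta$ term, the common factors $\pi^2$ and $K^2$ coming out front.

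It remains to sum over $q\in\Lambda^k$. Since $A$ and $B$ are independent of $q$, this reduces to $\sum_{q\in\Lambda^k}\|U^\theta[q]f\|^2\le\|f\|^2$, which follows from the admissibility condition \eqref{eq: admissible} by iterating the per-layer frame bound $B_k^\theta\le 1$ together with the Lipschitz/pooling contractions, exactly as in the well-definedness argument behind Proposition \ref{pro: f.e. well defined}. I expect the main difficulty to lie entirely in the first two steps: faithfully propagating both the accumulated phase and the residual $\theta$-translation through the $k$ layers, and verifying that after the $e^{-\pi ikt^2\cot\theta}$ cancellation the leftover chirp and the modulation chirp assemble into an affine multiplier whose coefficients $A$ and $B$ are precisely those reproducing the five displayed terms. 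The analytic estimate is routine once the two pointwise bounds on $\fft\varphi_k$ are granted.
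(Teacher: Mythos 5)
Your proposal follows the paper's own proof essentially step for step: the same path-wise expansion of $|\|\cdot\||^2$, the same iterated layer-commutation lemma producing the accumulated phase, the residual chirp and the surviving translation $T^\theta_{t/(s_1\cdots s_k)}$, the same passage to the frequency side via Parseval for $\fft$, the convolution theorem \eqref{eq: conv F frft} and the translation--modulation relation \eqref{eq: preli translation modulation}, the same estimate $|e^{\pi i\xi}-1|\le\pi|\xi|$ with the five-term expansion of $\xi^2$ (dropping the nonnegative cross term), the same use of the Riemann--Lebesgue lemma together with \eqref{eq: th1 decay} to get a single constant $K$, and the same path-sum bound $\sum_{q\in\Lambda^k}\|U^\theta[q]f\|^2\le\|f\|^2$ from admissibility. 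The only cosmetic deviations are that you pull the $\theta$-translation outside the $\theta$-convolution before transforming (the paper transforms $\lp T^\theta_{t/s_1\cdots s_k}f_q\rp\tc\varphi_k$ directly) and that you bound the mixed integral via $|\omega|\,|\fft(\varphi_k)(\omega)|^2=|\fft(\varphi_k)(\omega)|\cdot|\omega\fft(\varphi_k)(\omega)|\le K^2$ instead of the paper's case analysis on $|\omega|^{1/2}|\fft(\varphi_k)(\omega)|$ --- both immaterial.
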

\begin{proof}
  Consider,
  \begin{align*}
      &|\|e^{-\pi ikt^2\cot\theta}\Phi_{\Omega^\theta}^{k,\theta}(T_t^\theta f)-\Phi^{k,\theta}_{\Omega^\theta}(f)\||^2\\
      =&\sum_{q\in\Lambda^k}\lnm e^{-\pi ikt^2\cot\theta}\lp U^\theta[q]T_t^\theta f\rp\tc\varphi_k-\lp U^\theta[q]f\rp\tc\varphi_k\rnm^2\\
      =&\sum_{q\in\Lambda^k}\lnm e^{-\pi it^2\lp \frac{1}{s_1^2}+\cdots+\frac{1}{s_k^2}\rp\cot\theta}\lp T^\theta_{t/s_1\cdots s_k}U^\theta[q]f\rp\tc\varphi_k-\lp U^\theta[q]f\rp\tc\varphi_k\rnm^2\\
      =&\sum_{q\in\Lambda^k}\lnm e^{-\pi it^2\lp \frac{1}{s_1^2}+\cdots+\frac{1}{s_k^2}\rp\cot\theta}\lp T^\theta_{t/s_1\cdots s_k} f_q\rp\tc\varphi_k-f_q\tc\varphi_k\rnm^2\\
      = &\sum_{q\in\Lambda^k}\intn \left|A_{k,\theta}\lp T^\theta_{t/s_1\cdots s_n}f_q\rp\tc\varphi_k(x)-f_q\tc\varphi_k(x)\right|^2\diff x\\
      =&\sum_{q\in\Lambda^k}\intn\left| A_{k,\theta}\fft\lp\lp T^\theta_{t/s_1\cdots s_n}f_q\rp\tc\varphi_k\rp(\omega)-\fft\lp f_q\tc\varphi_k\rp(\omega)\right|^2\diff\omega\\
      =&\sum_{q\in\Lambda^k} \intn \left|A_{k,\theta}\fft\lp T^\theta_{t/s_1\cdots s_k}f_q\rp(\omega)\fft(\varphi_k)(\omega)-\fft(f_q)(\omega)\fft(\varphi_k)(\omega)\right|^2\diff\omega\\
      =&\sum_{q\in\Lambda^k} \intn \left|A_{k,\theta}e^{\pi i\lp\frac{t}{s_1\cdots s_k}\rp^2\cot\theta}e^{-\frac{2\pi it\omega}{s_1\cdots s_k}\csc\theta}-1\right|^2\left|\fft(f_q)(\omega)\fft(\varphi_k)(\omega)\right|^2\diff\omega,
  \end{align*}
  where we used \eqref{eq: lemma 2} and \eqref{eq: preli translation modulation}, and set $f_q:=U^\theta[q]f$ and $A_{k,\theta}:=e^{-\pi it^2\lp \frac{1}{s_1^2}+\cdots +\frac{1}{s_k^2}\rp\cot\theta}$. Let
  \[
  A^\prime_{k,\theta}(\omega):=A_{k,\theta}e^{\pi i\lp\frac{t}{s_1\cdots s_k}\rp^2\cot\theta}e^{-\frac{2\pi it\omega}{s_1\cdots s_k}\csc\theta}=e^{\pi i\xi},
  \]
  where 
  \[
  \xi:=-\underbrace{\left[t^2\lp \frac{1}{s_1^2}+\cdots+\frac{1}{s_k^2}\rp\cot\theta\right]}_{:=\alpha}+\underbrace{\left[\frac{t^2}{s_1^2\cdots s_k^2}\cot\theta\right]}_{:=\beta}-\underbrace{\left[\frac{2\omega t}{s_1\cdots s_k}\csc\theta\right]}_{:=\gamma}.
  \]
  Then,
  \[
  |A^\prime_{k,\theta}(\omega)-1|^2=|e^{\pi i\xi}-1|^2=4\sin^2\lp \pi\xi/2\rp\leq \pi^2\xi^2.
  \]
  Next, we provide an estimate for $\xi^2$.
  \begin{align*}
      \xi^2=\alpha^2+\beta^2+\gamma^2-2\alpha\beta-2\beta\gamma+2\gamma\alpha &\leq \alpha^2+\beta^2 +\gamma^2-2\beta\gamma+2\gamma\alpha\\
      & \leq \alpha^2+\beta^2+\gamma^2+2|\beta\gamma|+2|\gamma\alpha|.
  \end{align*}
  Using the Cauchy-Schwarz inequality on $\R^n$, it is easy to see that
  \begin{align*}
  \gamma^2&\leq \frac{4|\omega|^2|t|^2}{(s_1\cdots s_k)^2}\csc^2\theta,\\
  |\beta\gamma|&\leq \frac{2|t|^3}{(s_1\cdots s_k)^3}|\cot\theta\cdot\csc\theta||\omega|,\\
  |\alpha\gamma|&\leq 2|t|^3\lp \frac{1}{s_1^2}+\cdots+\frac{1}{s_k^2}\rp\frac{1}{s_1\cdots s_k}|\cot\theta\cdot\csc\theta||\omega|.
  \end{align*}
  Hence,
  \begin{align*}
  &\frac{1}{\pi^2}|\|e^{-\pi ikt^2\cot\theta}\Phi^{k,\theta}_{\Omega^\theta}(T^t_\theta f)-\Phi^{k,\theta}_{\Omega^\theta}(f)\||^2\\
  \leq &\alpha^2\sum_{q\in\Lambda^k}\intn|\fft(f_q)(\omega)\fft(\varphi_k)(\omega)|^2\diff\omega:=R_1\\
  &+\beta^2\sum_{q\in\Lambda^k}\intn|\fft(f_q)(\omega)\fft(\varphi_k)(\omega)|^2\diff\omega:=R_2\\
  &+\frac{4t^2|\csc^2\theta|}{(s_1\cdots s_k)^2}\sum_{q\in\Lambda^k}\intn|\omega|^2|\fft(f_q)(\omega)\fft(\varphi_k)(\omega)|^2\diff\omega:=R_3\\
  &+\frac{4|t|^3|\csc\theta\cdot\cot\theta|}{(s_1\cdots s_k)^3}\sum_{q\in\Lambda^k}\intn|\omega||\fft(f_q)(\omega)\fft(\varphi_k)(\omega)|^2\diff\omega:=R_4\\
  &+\frac{4|t|^3|\csc\theta\cdot\cot\theta|}{s_1\cdots s_k}\lp\frac{1}{s_1^2}+\cdots+\frac{1}{s_k^2}\rp\sum_{q\in\Lambda^k}\intn|\omega||\fft(f_q)(\omega)\fft(\varphi_k)(\omega)|^2\diff\omega=:R_5.
  \end{align*}
  As $\varphi_k\in L^1(\R^n)\cap L^2(\R^n)$, appealing to the Riemann-Lebesgue lemma for the FrFT (see, for instance, Lemma 2.2 in \cite{chen2021fractional}), we get $\fft(\varphi_k)\in C_0(\R^n)$. Hence, there exists a $K_2>0$ such that
  \begin{equation}\label{eq: th1 rl decay}
  |\fft(\varphi_k)(\omega)|\leq K_2,~~\text{ for all }\omega\in\R^n.
  \end{equation}
  Further, using \eqref{eq: th1 decay} and \eqref{eq: th1 rl decay}, we obtain
  \[
      |\omega|^{1/2}|\fft(\varphi_k)(\omega)|\leq
      \begin{cases}
          \frac{K_1}{|\omega|^{1/2}}\leq K_1,\quad& |\omega|>1,\\
          K_2,\quad& |\omega|\leq 1.
      \end{cases}
  \]
  Set $K:=\max\{K_1,K_2\}$. Then,
  \begin{align*}
      R_1&\leq \alpha^2K^2\sum_{q\in\Lambda^k}\intn |\fft(f_q)(\omega)|^2\diff\omega=K^2t^4\lp\frac{1}{s_1^2}+\cdots\frac{1} {s_k^2}\rp^2\cot^2\theta\sum_{q\in\Lambda^k}\|f_q\|^2.
  \end{align*}
  Similarly,
  \begin{align*}
      R_2&\leq K^2\frac{t^4}{(s_1\cdots s_k)^2}\cot^2\theta\sum_{q\in\Lambda^k}\|f_q\|^2,\\
      R_3&\leq \frac{4K^2t^2\csc^2\theta}{s_1^2\cdots s_k^2}\sum_{q\in\Lambda^k}\|f_q\|^2,\\
      R_4&\leq \frac{4K^2|t|^{3}|\csc\theta\cdot\cot\theta|}{(s_1\cdots s_k)^3}\sum_{q\in\Lambda^k}\|f_q\|^2,\\
      R_5&\leq \frac{4|t|^{3}K^2|\csc\theta\cdot\cot\theta|}{s_1\cdots s_k}\lp\frac{1}{s_1^2}+\cdots+\frac{1}{s_k^2}\rp\sum_{q\in\Lambda^k}\|f_q\|^2.
  \end{align*}
  Finally, proceeding as in the classical case (cf. Appendix I in \cite{Wiatowski}), we arrive at
  \begin{equation}\label{eq: DCNN 1}
  \sum_{q\in\Lambda^k}\|f_q\|^2\leq\sum_{q\in \Lambda^{k-1}}\|f_q\|^2.
  \end{equation}
  By repeated use of the inequality \eqref{eq: DCNN 1}, we obtain
  \[
  \sum_{q\in\Lambda^k}\|f_q\|^2\leq\|f\|^2,
  \]
  from which the result follows.
\end{proof}

\begin{corollary}
For an appropriate choice of scaling factors $\{s_k\}$, the right-hand side of \eqref{eq: th 1 estimate} can be made arbitrarily small.
\end{corollary}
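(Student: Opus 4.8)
The plan is to treat $t$, $\theta$, $K$, and $\|f\|$ as fixed data and to regard the right-hand side of \eqref{eq: th 1 estimate} purely as a function of the pooling factors $s_1,\dots,s_k$. Since the FrFT is in play we have $\theta\neq m\pi$ for every integer $m$, so $\cot\theta$ and $\csc\theta$ are finite; consequently the prefactor $\pi^2K^2\|f\|^2$ together with the fixed powers of $t$ and the trigonometric factors are just bounded constants. It therefore suffices to show that the bracketed expression can be driven to zero by an appropriate choice of the $s_j$.

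First I would inspect each of the five summands in the bracket. Three of them---the $\tfrac{t^4}{s_1^4\cdots s_k^4}\cot^2\theta$ term, the $\tfrac{4t^2}{s_1^2\cdots s_k^2}\csc^2\theta$ term, and the $4\tfrac{|t|^3}{(s_1\cdots s_k)^3}|\cot\theta\,\csc\theta|$ term---carry the full product $s_1\cdots s_k$ (to various positive powers) in the denominator, and hence tend to $0$ as soon as this product is made large. The remaining two summands, $t^4\big(\sum_{j} s_j^{-2}\big)^2\cot^2\theta$ and $\tfrac{4|t|^3}{s_1\cdots s_k}\big(\sum_j s_j^{-2}\big)|\cot\theta\,\csc\theta|$, involve the sum $\sum_{j=1}^k s_j^{-2}$. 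For a \emph{fixed} layer index $k$ this is a finite sum, so each term $s_j^{-2}\to 0$ as $s_j\to\infty$ and the whole sum can be made as small as desired. Thus, given any $\varepsilon>0$, choosing $s_1,\dots,s_k$ sufficiently large forces every summand in the bracket below a prescribed fraction of $\varepsilon/(\pi^2K^2\|f\|^2)$, which yields the claim at each fixed depth $k$.

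To obtain a bound that is in addition uniform in the depth $k$---the version relevant for a genuinely deep network---I would instead fix a single geometric sequence $s_j=\rho^j$ with $\rho>1$. Then $s_1\cdots s_k=\rho^{k(k+1)/2}$ grows super-exponentially, so all three product-denominator terms are dominated, uniformly in $k\ge 1$, by their values at $k=1$, while $\sum_{j=1}^k s_j^{-2}\leq\sum_{j=1}^\infty\rho^{-2j}=(\rho^2-1)^{-1}$. Hence every summand in the bracket is controlled, uniformly in $k$, by a fixed negative power of $\rho$ or of $(\rho^2-1)$, each of which tends to $0$ as $\rho\to\infty$. Taking $\rho$ large then makes the right-hand side of \eqref{eq: th 1 estimate} smaller than $\varepsilon$ simultaneously for all $k$.

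The only point requiring any care is the treatment of the two summands containing $\sum_{j=1}^k s_j^{-2}$: unlike the product terms, this sum need not shrink as $k$ grows, so a uniform-in-$k$ estimate cannot be obtained merely by enlarging the $s_j$ in an arbitrary way. The geometric choice $s_j=\rho^j$ is exactly what guarantees summability and smallness of $\sum_j s_j^{-2}$ independently of $k$, and this is the step I would single out as the main (though mild) obstacle; the rest is bookkeeping with the constants already isolated in the first paragraph.
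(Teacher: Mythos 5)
Your argument is correct and is essentially the paper's own: the paper likewise controls the two summands containing $\sum_{j=1}^k s_j^{-2}$ by choosing the $s_j$ as the tail of a sequence whose reciprocal squares are summable with tail sum $<\epsilon$ (the paper's ``square-summable $\{x_k\}$'' is a typo for exactly this), while the product terms are handled by the growth of $s_1\cdots s_k$. Your geometric choice $s_j=\rho^j$ is an explicit instance of that construction, with the minor bonus that your estimate is uniform over all depths $k$ rather than only for $k$ beyond some threshold $N'$.
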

\begin{proof}
    Let $\{x_k\}$ be a square-summable sequence of positive real numbers. Let $\epsilon>0$. Then, choose $N\in\N$ such that
    \[
    \frac{1}{x^2_{N+1}}+\frac{1}{x^2_{N+2}}+\cdots <\epsilon.
    \]
    Take $s_k:=x_{N+k}$. Then, we can choose an $N^\prime\in\N$ such that
    \[
    |\|e^{-\pi ikt^2\cot\theta}\Phi^{k,\theta}_{\Omega^\theta}\lp T_t^\theta f\rp-\Phi_{\Omega^\theta}^{k,\theta}(f)\||<\epsilon.
    \]
\end{proof}

\begin{corollary}\label{cor: translation invariance}
    Let $\Omega^\theta=\lc \Psi_k^\theta,\mkt,\pkt\rc$ be an admissible $\theta$-module sequence. Let $s_k\geq 1$ be the pooling factors and $\mkt$ and $\pkt$ commute with $T_t^\theta$. Assume that the output generating atoms satisfy
    \begin{equation*}
    |\omega\fft(\varphi_k)(\omega)|<K_1,
    \end{equation*}
    for a.e. $\omega\in\R^n$ and $k\in\N$. Then,
    \begin{align*}
        &|\|e^{-\pi ikt^2\cot\theta}\Phi_{\Omega^\theta}^{k,\theta}(T_t^\theta f)-T_t^\theta\lp\Phi_{\Omega^\theta}^{k,\theta}(f)\rp\||^2\\
        \leq & \Big[t^4\lp\frac{1}{s_1^2}+\cdots+\frac{1}{s_k^2}\rp^2\cot^2\theta+t^4\lp 1-\frac{1}{s_1^2\cdots s_k^2}\rp\cot^2\theta\\
        &+4t^2\lp 1-\frac{1}{s_1\cdots s_k}\rp^2\csc^2\theta+4|t|^{5/2}|\cot\theta\cdot\csc\theta|\lp\frac{1}{s_1^2}+\cdots+\frac{1}{s_k^2}\rp\\
        &+4|t|^{5/2}|\cot\theta\cdot\csc\theta|\lp 1-\frac{1}{s_1^2\cdots s_k^2}\rp\lp 1-\frac{1}{s_1\cdots s_k}\rp\\
        &+2t^4\lp \frac{1}{s_1^2}+\cdots+\frac{1}{s_k^2}\rp\lp 1-\frac{1}{s_1^2\cdots s_k^2}\rp|\csc\theta|\cot^2\theta\Big] \pi^2K^2\|f\|^2.
    \end{align*}
\end{corollary}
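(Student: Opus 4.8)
The plan is to follow the architecture of the proof of Theorem~\ref{th: theorem 1}, the only structural change being that the reference object is now $T_t^\theta\lp\Phi^{k,\theta}_{\Omega^\theta}(f)\rp$ rather than $\Phi^{k,\theta}_{\Omega^\theta}(f)$; accordingly the estimate quantifies deviation from $\theta$-translation \emph{equivariance} instead of invariance. First I would expand the feature-space norm over paths and use that $T_t^\theta$ acts componentwise, obtaining
\[
|\|e^{-\pi ikt^2\cot\theta}\Phi_{\Omega^\theta}^{k,\theta}(T_t^\theta f)-T_t^\theta\lp\Phi_{\Omega^\theta}^{k,\theta}(f)\rp\||^2=\sum_{q\in\Lambda^k}\lnm e^{-\pi ikt^2\cot\theta}\lp U^\theta[q]T_t^\theta f\rp\tc\varphi_k-T_t^\theta\lp\lp U^\theta[q]f\rp\tc\varphi_k\rp\rnm^2,
\]
so that it suffices to estimate each summand. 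Writing $f_q:=U^\theta[q]f$, I would handle the two terms separately: to the first I apply \eqref{eq: lemma 2}, which turns $e^{-\pi ikt^2\cot\theta}U^\theta[q]T_t^\theta f$ into $A_{k,\theta}\,T^\theta_{t/s_1\cdots s_k}f_q$ with $A_{k,\theta}=e^{-\pi it^2(1/s_1^2+\cdots+1/s_k^2)\cot\theta}$; to the second I apply \eqref{eq: translation convolution}, which moves the outer $\theta$-translation inside the $\theta$-convolution, giving $\lp T_t^\theta f_q\rp\tc\varphi_k$.

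Next I would pass to the FrFT domain via Plancherel for $\fft$, the $\theta$-convolution theorem \eqref{eq: conv F frft}, and the translation--modulation rule \eqref{eq: preli translation modulation}. Both terms then carry the common unimodular chirp $e^{-\pi i\omega^2\cot\theta}$ and share the factor $\fft(f_q)(\omega)\fft(\varphi_k)(\omega)$, so each summand reduces to
\[
\intn\abs{A_{k,\theta}e^{\pi i\lp(t/s_1\cdots s_k)^2\cot\theta-2(t/s_1\cdots s_k)\omega\csc\theta\rp}-e^{\pi i\lp t^2\cot\theta-2t\omega\csc\theta\rp}}^2\abs{\fft(f_q)(\omega)\fft(\varphi_k)(\omega)}^2\diff\omega.
\]
Factoring out one of the two exponentials collapses the bracket to $\abs{e^{\pi i\xi'}-1}^2\leq\pi^2\xi'^2$, where the phase defect splits as $\xi'=P+Q$ with a frequency-independent part $P=-t^2\lp(1/s_1^2+\cdots+1/s_k^2)+(1-1/(s_1^2\cdots s_k^2))\rp\cot\theta$ and a frequency-dependent part $Q=2t\omega\lp1-1/(s_1\cdots s_k)\rp\csc\theta$. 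The essential new feature relative to Theorem~\ref{th: theorem 1} is that the \emph{full} translation $t$ now survives (through the summands $-t^2\cot\theta$ in $P$ and $+2t\omega\csc\theta$ in $Q$) alongside the pooled translation $t/(s_1\cdots s_k)$; this is exactly what generates the extra factors $1-1/(s_1^2\cdots s_k^2)$ and $1-1/(s_1\cdots s_k)$ in the claimed bound.

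Finally I would expand $\xi'^2\leq P^2+Q^2+2\abs{P}\abs{Q}$ and integrate term by term. The purely $\cot\theta$ pieces of $P^2$ produce the first, second, and cross terms of the bound, using the elementary estimate $(1-1/(s_1^2\cdots s_k^2))^2\leq 1-1/(s_1^2\cdots s_k^2)$; the piece $Q^2$, which carries a factor $\abs{\omega}^2$, is absorbed by the hypothesis $\abs{\omega\fft(\varphi_k)(\omega)}<K_1$; and the mixed $\cot\theta\cdot\csc\theta$ pieces $2\abs{P}\abs{Q}$, which carry a single factor $\abs{\omega}$, are controlled through $\abs{\omega}^{1/2}\abs{\fft(\varphi_k)(\omega)}\leq K:=\max\{K_1,K_2\}$ exactly as in \eqref{eq: th1 rl decay} and the $R_4,R_5$ estimates in the proof of Theorem~\ref{th: theorem 1}. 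Summing over $q$ and invoking \eqref{eq: DCNN 1} repeatedly gives $\sum_{q\in\Lambda^k}\|f_q\|^2\leq\|f\|^2$, which finishes the estimate. I expect the main obstacle to be purely organizational: bookkeeping the enlarged family of cross terms forced by the simultaneous presence of the full and pooled translations, and matching each single-$\abs{\omega}$ mixed term to the correct decay exponent so that it is dominated by $K^2\|f_q\|^2$ rather than by an uncontrolled moment of $\fft(f_q)$.
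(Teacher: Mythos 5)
Your proposal follows exactly the route the paper intends: the paper's own proof of this corollary is a one-line deferral to the argument of Theorem \ref{th: theorem 1}, and your reconstruction --- componentwise expansion over paths, the identity \eqref{eq: lemma 2} applied to the first term, \eqref{eq: translation convolution} applied to the second, passage to the FrFT domain via \eqref{eq: conv F frft} and \eqref{eq: preli translation modulation}, the bound $|e^{\pi i\xi'}-1|^2\leq\pi^2\xi'^2$ with the phase split $\xi'=P+Q$, and absorption of the $|\omega|$-factors through $K=\max\{K_1,K_2\}$ together with \eqref{eq: DCNN 1} --- is the correct adaptation. Your identification of $P$ and $Q$ is exactly right, and the terms $P^2$ and $Q^2$ reproduce the first three terms of the printed bound (the second via $(1-\tfrac{1}{s_1^2\cdots s_k^2})^2\leq 1-\tfrac{1}{s_1^2\cdots s_k^2}$, as you note).

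One discrepancy must be flagged, however. Carried out as you describe, the two mixed terms coming from $2|P||Q|$ are proportional to $|t|^{3}|\cot\theta\cdot\csc\theta|$, in parallel with the terms $R_4$ and $R_5$ in the proof of Theorem \ref{th: theorem 1}, which also carry $|t|^{3}$. The corollary as printed instead has $|t|^{5/2}$ in these two places. Since $|t|^{3}\leq|t|^{5/2}$ only when $|t|\leq 1$, your derivation establishes the printed inequality only for $|t|\leq 1$; for general $t$ it establishes the variant with $|t|^{5/2}$ replaced by $|t|^{3}$. Nothing in the machinery of Theorem \ref{th: theorem 1} produces a fractional power of $|t|$ (the phases are linear and quadratic in $t$), so this is almost certainly a typo in the printed statement rather than a gap in your reasoning, but your write-up should state explicitly that the exponent you obtain is $3$, not $5/2$, rather than asserting that the printed bound follows verbatim. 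Your remaining deviations are harmless and go in the right direction: the cross term of $P^2$ that you obtain, $2t^4\lp\tfrac{1}{s_1^2}+\cdots+\tfrac{1}{s_k^2}\rp\lp 1-\tfrac{1}{s_1^2\cdots s_k^2}\rp\cot^2\theta$, is dominated by the printed sixth term because $|\csc\theta|\geq 1$, and your fourth mixed term retains an extra factor $1-\tfrac{1}{s_1\cdots s_k}\leq 1$ that the printed bound simply drops.
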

\begin{proof}
    The proof is similar to that of Theorem \ref{th: theorem 1} and left to the reader.
\end{proof}

\section{Data approximation problem using the FrFT}
The aim of this section is to study the data approximation problem using the FrFT. In the first part, given a finite set of functional data, $\fl=\{f_1,\cdots,f_m\}$ in $L^2(\R^n)$, we construct a $\theta$-shift invariant subspace of $L^2(\R^n)$ which approximates $\fl$. In the final part, we obtain a subspace of bandlimited functions in the FrFT domain which approximates $\fl$. Towards this end, we first define and study some fundamental properties of the fiber map in the context of the FrFT. Throughout this section, we write 
\[
I:=[0,|\sin\theta|]^n.
\]
Let $f\in L^2(\R^n)$. Then, appealing to the Plancherel theorem for the FrFT, we obtain
\begin{align}\label{eq: fiber defn}
    \nonumber\|f\|^2=\|\fft(f)\|^2&=\int_I\sum_{k\in\Z^n}|\fft (f)(\omega+k\sin\theta)|^2\diff\omega\\
    \nonumber&=\int_I\sum_{k\in\Z^n}|e^{\pi i(\omega+k\sin\theta)^2}(\ch f)~\widehat{}~\lp(\omega+k\sin\theta)\csc\theta\rp|^2\diff\omega\\
    &=\int_I\sum_{k\in\Z^n}|(\ch f)~\widehat{}~(\omega\csc\theta+k)|^2\diff\omega.
\end{align}
This leads us to the following
\begin{definition}
    The {\it fiber map} in connection with the FrFT, denoted by $\tau^\theta$, is defined as
    \[
    \tau^\theta(f)(\omega):=\lc(\ch f)~\widehat{}~(\omega\csc\theta+k)\rc_{k\in\Z^n},\quad\omega\in I,\, f\in L^2(\R^n).
    \]
\end{definition}
Using \eqref{eq: fiber defn}, we can conclude that $\tau^\theta:L^2(\R^n)\to L^2\lp I,\ell^2(\Z^n)\rp$ is an isometric isomorphism. Moreover, for $k\in\Z^n$, $\omega\in I$, we obtain
\begin{align}\label{eq: fiber translation}
    \nonumber\tau^\theta\lp T_k^\theta f\rp(\omega)&=\lc \lp\ch T_k^\theta f\rp~\widehat{}~(\omega\csc\theta+\ell)\rc_{\ell\in\Z^n}=e^{\pi ik^2\cot\theta}\lc (T_k\ch f)~\widehat{}~(\omega\csc\theta+\ell)\rc_{\ell\in\Z^n}\\
    &=e^{\pi ik^2\cot\theta}e^{-2\pi ik\omega\csc\theta}\tau^\theta(f)(\omega),
\end{align}
using \eqref{eq:T_xT_x^A}. Let $V\subset L^2(\R^n)$ be a closed subspace. Then, for $\omega\in I$, we write
\[
J_V^\theta(\omega)=\cl_{\ell^2(\Z^n)}{\lc\tau^\theta(f)(\omega):f\in V\rc}=\lc \lc (\ch f)~\widehat{}~(\omega\csc\theta+\ell)\rc_{\ell\in\Z^n}:f\in V\rc\subset\ell^2(\Z^n).
\]
\begin{lemma}
    Let $V\subset L^2(\R^n)$ be a finitely generated $\theta$-shift invariant space. Then, we have the following.
    \begin{itemize}
        \item[(i)] $J_V^\theta(\omega)$ is a closed subspace of $\ell^2(\Z^n)$, for a.e. $\omega\in I$.
        \item[(ii)] $V=\cl_{L^2(\R^n)}\Span\{ f\in L^2(\R^n):\tau^\theta(f)(\omega)\in J_V^\theta(\omega)\}$, a.e. $\omega\in I$.
        \item[(iii)] \textcolor{black}{Let $P_V: L^2(\R^n)\to L^2(\R^n)$ denote the orthogonal projection onto $V$}. For each $f\in L^2(\R^n)$, the map $\omega\mapsto \|\tau^\theta(P_Vf)(\omega)\|_{\ell^2}$, is measurable and 
        \begin{equation}\label{eq: tau theta proj}
        \tau^\theta(P_Vf)(\omega)=P_{J^\theta_V(\omega)}(\tau^\theta (f)(\omega)),\quad\text{ a.e. }\omega\in I.
        \end{equation}
        \item[(iv)] Let $\phi_1,\cdots\phi_m\in L^2(\R^n)$. Then,\\
        \begin{itemize}
            \item[(a)] $V=\cl_{L^2(\R^n)}{\Span}\lc T^\theta_k\phi_\ell:~k\in\Z^n,~\ell=1,\cdots, m\rc$ if and only if $\lc \tau^\theta(\phi_\ell)(\omega):\ell=1,\cdots m\rc$ spans $J_V^\theta(\omega)$, for a.e. $\omega\in I$.\\
            \item[(b)] The collection $\lc T^\theta_k\phi_\ell:~k\in\Z^n,~\ell=1,\cdots, m\rc$ is a Riesz basis (resp. frame) for $V$ if and only if $\lc \tau^\theta(\phi_\ell)(\omega):\ell=1,\cdots m\rc$ is a Riesz basis (resp. frame) for $J_V^\theta(\omega)$, for a.e. $\omega\in I$.
        \end{itemize}
    \end{itemize}
\end{lemma}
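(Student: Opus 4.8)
The plan is to reduce each of the four assertions to the corresponding statement in the classical theory of shift-invariant spaces, using the chirp modulation $\ch$ together with the fact (established above) that $\tau^\theta$ is an isometric isomorphism. Write $\mathcal{Z}g(\xi):=\{\widehat{g}(\xi+k)\}_{k\in\Z^n}$ for the classical Fourier-side fiber map, an isometric isomorphism from $\ltn$ onto $L^2$ of a fundamental domain of $\Z^n$ with values in $\ell^2(\Z^n)$. The key observation is the factorization
\[
\tau^\theta(f)(\omega)=\mathcal{Z}(\ch f)(\omega\csc\theta),
\]
where the substitution $\xi=\omega\csc\theta$ carries $I=[0,|\sin\theta|]^n$ bijectively, up to a null set and the Jacobian factor $|\csc\theta|^n$, onto a fundamental domain of $\Z^n$. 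Since $\ch$ is a unitary multiplication operator and \eqref{eq:T_xT_x^A} gives $\ch\,T_k^\theta=e^{\pi ik^2\cot\theta}\,T_k\,\ch$, the subspace $W:=\ch V$ is a \emph{classical} shift-invariant space exactly when $V$ is $\theta$-shift invariant, and its classical range function $J_W$ satisfies $J_V^\theta(\omega)=J_W(\omega\csc\theta)$ a.e. Every part will then be obtained by transporting a classical fact back through $\ch$ and this reparametrization.

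For (i) and (ii) I would invoke the classical range-function description of shift-invariant spaces employed in \cite{AldroubiACHA07,cabrelli}: the space $W$ admits a measurable range function $\xi\mapsto J_W(\xi)$ into closed subspaces of $\ell^2(\Z^n)$ with $J_W(\xi)=\cl\{\mathcal{Z}g(\xi):g\in W\}$ a.e. and $W=\{g:\mathcal{Z}g(\xi)\in J_W(\xi)\ \text{a.e.}\}$. Pulling this back through the factorization immediately gives that $J_V^\theta(\omega)$ is a closed subspace for a.e. $\omega\in I$, proving (i); and since the pulled-back set $\{f:\tau^\theta(f)(\omega)\in J_V^\theta(\omega)\ \text{a.e.}\}$ equals $\ch^{-1}W=V$, taking its closed linear span returns $V$, which is (ii).

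For (iii) I would use that $\ch$ is unitary, so $P_V=\ch^{-1}P_W\ch$, combined with the classical fiberization of orthogonal projections, namely $\mathcal{Z}(P_Wg)(\xi)=P_{J_W(\xi)}(\mathcal{Z}g(\xi))$ a.e. together with the measurability of $\xi\mapsto\|\mathcal{Z}(P_Wg)(\xi)\|_{\ell^2}$. Composing with $\tau^\theta=\mathcal{Z}\,\ch$ and $J_V^\theta(\omega)=J_W(\omega\csc\theta)$ yields \eqref{eq: tau theta proj} and the stated measurability. For (iv), note that under $\ch$ the system $\{T_k^\theta\phi_\ell\}$ becomes $\{e^{\pi ik^2\cot\theta}\,T_k(\ch\phi_\ell)\}$; the unimodular factors alter neither the closed linear span nor any frame or Riesz bounds, so this system is equivalent to the classical shift-generated system $\{T_k(\ch\phi_\ell):k\in\Z^n,\,1\le\ell\le m\}$ in $W$. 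The classical fiberization theorems for generators then say that spanning of $W$ is equivalent to fiberwise spanning of $J_W(\xi)$ a.e., and that the shift system is a frame (resp. Riesz basis) for $W$ with bounds $A,B$ iff the fibers $\{\mathcal{Z}(\ch\phi_\ell)(\xi)\}$ form a frame (resp. Riesz basis) for $J_W(\xi)$ with the same bounds for a.e. $\xi$. Since \eqref{eq: fiber translation} identifies $\mathcal{Z}(\ch\phi_\ell)(\xi)$ with $\tau^\theta(\phi_\ell)(\omega)$ up to a unimodular factor, both (iv)(a) and (iv)(b) follow.

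The step I expect to be the main obstacle is the frame/Riesz equivalence in (iv)(b): transferring \emph{uniform} frame (respectively Riesz) bounds between the global system and its fibers, and ensuring the fiberwise bounds hold simultaneously for a.e. $\xi$, requires the Ron--Shen dual-Gramian analysis together with a measurable-selection argument. By contrast, parts (i)--(iii) are essentially bookkeeping once the factorization $\tau^\theta=\mathcal{Z}\,\ch$ and the reparametrization $\xi=\omega\csc\theta$ are in place, since they only transport measurability, closedness, and the projection identity through a unitary and a change of variables.
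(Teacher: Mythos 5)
Your proposal is correct, but it takes a genuinely different route from the paper. The paper never passes to the classical theory as a black box: it works entirely inside the FrFT setting, proving (ii) by hand (for $g\in V^\perp$ it shows all Fourier coefficients of $\omega\mapsto\langle\tau^\theta(g)(\omega),\tau^\theta(f)(\omega)\rangle_{\ell^2(\Z^n)}$ vanish, so the fibers of $V^\perp$ are pointwise orthogonal to $J_V^\theta(\omega)$), proving (iii) by decomposing $f=f_1+f_2\in V\oplus V^\perp$ and applying the definition of the fiber map to each piece, and proving (iv)(b) by redoing the argument of Theorem 2.3 of \cite{Bownik} in the $\theta$-setting: test against $\theta$-twisted trigonometric polynomials $p_j(\omega)=\sum_k a_k^j e^{\pi ik^2\cot\theta}e^{-2\pi ik\omega\csc\theta}$, then upgrade to arbitrary $L^\infty$ multipliers via Lusin's theorem and dominated convergence. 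You instead set up a transfer principle once: $\ch$ is unitary, \eqref{eq:T_xT_x^A} shows $W=\ch V$ is a classical shift-invariant space exactly when $V$ is $\theta$-shift invariant, the factorization $\tau^\theta(f)(\omega)=\mathcal{Z}(\ch f)(\omega\csc\theta)$ gives $J_V^\theta(\omega)=J_W(\omega\csc\theta)$ a.e., and the unimodular factors $e^{\pi ik^2\cot\theta}$ change neither closed spans nor frame/Riesz bounds; all four parts then become citations of the Helson--Bownik fiberization theory. Your route is shorter and makes transparent that the fractional setting adds nothing beyond conjugation by a chirp and an affine reparametrization; the paper's route is self-contained, keeps explicit control of the bounds, and develops the $\theta$-fiber calculus (e.g.\ \eqref{eq: fiber translation}) that it reuses in the approximation theorems later. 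Two small corrections to your write-up: the identification $\mathcal{Z}(\ch\phi_\ell)(\omega\csc\theta)=\tau^\theta(\phi_\ell)(\omega)$ is exact and is just your own factorization applied to $\phi_\ell$ --- it does not come from \eqref{eq: fiber translation} and involves no unimodular factor (that equation is what you need to match the fibers of the \emph{translated} systems); and when $\sin\theta<0$ the image of $I$ under $\omega\mapsto\omega\csc\theta$ is a fundamental domain only modulo $\Z^n$, so the dictionary $J_V^\theta(\omega)=J_W(\omega\csc\theta)$ implicitly uses the $\Z^n$-periodicity of the classical range function; both points deserve a sentence in a final write-up, though neither is a gap.
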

\begin{proof}
    (i) The closedness of $J_V^\theta(\omega)$ follows from the definition.\\
    (ii) Let 
    \[
    W:=\lc f\in L^2(\R^n):\tau^\theta(f)(\omega)\in J_V^\theta(\omega),~~\text{ a.e. }\omega\in I\rc.
    \]
    Then it is clear from the definition of $J_V^\theta(\omega)$ that $V\subset W$. In order to see the other inclusion, let $g\in V^\perp$. Then, for all $k\in\Z^n$ and $f\in V$,
    \begin{align*}
        0=\lng g, T_k^\theta f\rng_{L^2(\R^n)}&=\lng \tau^\theta(g),\tau^\theta(T^\theta_kf)\rng_{L^2\lp I,\ell^2(\Z^n)\rp}\\
        &=\int_I e^{-\pi ik^2\cot\theta}e^{2\pi ik\omega\csc\theta}\lng \tau^\theta(g)(\omega),\tau^\theta(f)(\omega)\rng_{\ell^2(\Z^n)}\diff\omega,
    \end{align*}
    using \eqref{eq: fiber translation}. In other words, all the Fourier coefficients of the function $\omega\mapsto\lng \tau^\theta(g)(\omega),\tau^\theta(f)(\omega)\rng_{\ell^2(\Z^n)}$ vanish. Thus,
    \[
    \lng \tau^\theta(g)(\omega),\tau^\theta(f)(\omega)\rng_{\ell^2(\Z^n)}=0,~~\text{ for all }f\in V,~\text{ a.e. }\omega\in I,
    \]
    which implies that $g\notin W$.\\
    (iii) Let $f=f_1+f_2\in L^2(\R^n)$, where $f_1\in V$ and $f_2\in V^\perp$. Then 
    \begin{align*}
        \tau^\theta(P_Vf)(\omega)&=\tau^\theta(f_1)(\omega)=\lc (\ch f_1)~\widehat{}~(\omega\csc\theta+k)\rc_{k\in\Z^n}\\
        &=J_V^\theta(\omega)\lp\lc(\ch f_1)~\widehat{}~(\omega\csc\theta+k)\rc_{k\in\Z^n}+\lc(\ch f_2)~\widehat{}~(\omega\csc\theta+k)\rc_{k\in\Z^n}\rp\\
        &=J^\theta_V(\omega)(\tau^\theta(f)(\omega)).
    \end{align*}
    (iv)(a) Assume that $V=\cl_{L^2(\R^n)}{\Span}\lc T_k^\theta\phi_\ell:k\in\Z^n,~\ell=1,\cdots,m\rc$. Take 
    \[
    W:=\Span\{T_k^\theta\phi_\ell:k\in\Z^n,\ell=1,\cdots m\}.
    \]
    Let $f\in W$. Then, there exist $\{c_k^j\}\in c_{00}$, for $j=1,\cdots, m$, such that
    \[
    f=\sum_{k\in\Z^n}\sum_{j=1}^mc_k^j\tau^\theta(T^k_\theta\phi_j).
    \]
    This, however, is same as
    \[
    \tau^\theta(f)(\omega)=\sum_{j=1}^m\sum_{k\in\Z^n} C_k^j e^{\pi ik^2\cot\theta}e^{-2\pi ik\omega\csc\theta}\tau^\theta(\phi_j)(\omega),
    \]
    using \eqref{eq: fiber translation} and yields
    \[
    \{\tau^\theta(f)(\omega):f\in W\}\subset \Span\{\tau^\theta(\phi_j)(\omega):j=1,\cdots, m\},
    \]
    which implies that
    \[
    \cl_{L^2(\R^n)}{\{\tau^\theta(f)(\omega):f\in W\}}\subset \Span\{\tau^\theta(\phi_j)(\omega):j=1,\cdots, m\},
    \]
    as the set in the right-hand side is closed. Similarly, one can show the reverse inclusion. 

    Conversely, assume that $\{\tau^\theta\phi_j(\omega):j=1\cdots,m\}$ spans $J_V^\theta(\omega)$, for a.e. $\omega\in I$. Then, for a fixed $\omega\in I$, and sequences $\{d_k^j\}$, $j=1,\cdots,m$,
    \[
    \sum_{j=1}^m\sum_{k\in\Z^n}d_k^je^{-2\pi ik\omega\csc\theta}e^{\pi ik^2\cot\theta}(\ch \phi_j)~\widehat{}~(\omega\csc\theta+k)\in J_V^\theta(\omega).
    \]
    However, this is same as 
    \[
    \tau_\theta\lp \sum_{j=1}^m\sum_{k\in\Z^n}d_k^jT_k\phi_j\rp(\omega)\in J_V^\theta(\omega).
    \]
    Let $f=\sum\limits_{j=1}^m\sum\limits_{k\in\Z^n}d_k^jT_k\phi_j$. Then, using (ii), it is clear that
    \[
    W=\cl_{L^2(\R^n)}\Span\{T_k^\theta\phi_j:k\in\Z^n,j=1,\cdots,m\}\subset V.
    \]
    Proceeding as in (ii), we can show that $V=W$.\\
    (iv)(b) The proof follows using similar ideas as in Theorem 2.3 in \cite{Bownik}. However, we include an outline of the proof for the characterization of Riesz basis. The proof in the case of frames is similar. For $j=1,\cdots,m$, let $\{a_k^j\}$ be a family of sequences in $c_{00}$. Let 
    \[
    p_j(\omega):=\sum_{k\in\Z^n} a_k^je^{\pi ik^2\cot\theta}e^{-2\pi ik\omega\csc\theta},\quad\omega\in I.
    \]
    Consider
    \begin{align*}
        \lnm \sum_{j=1}^m\sum_{k\in\Z^n} a_k^jT_k^\theta\phi_j\rnm^2_{\ltn}&=\lnm \sum_{j=1}^m\sum_{k\in\Z^n} a_k^jT_k^\theta\phi_j\rnm^2_{L^2\lp I,\ell^2(\Z^n)\rp}=\int_I\lnm \sum_{j=1}^mp_j(\omega)\tau^\theta(\phi_j)(\omega)\rnm^2_{\ell^2(\Z^n)}\diff\omega.
    \end{align*}
    Let us assume that $\{\tau^\theta(\phi_j)(\omega):j=1,\cdots,m\}$ is a Riesz basis with bounds $C_1, C_2$ for $J_V^\theta(\omega)$ for a.e. $\omega\in I$. Then
    \[
    C_1\sum_{j=1}^m|p_j(\omega)|^2\leq \lnm \sum_{j=1}^mp_j(\omega)\tau^\theta(\phi_j)(\omega)\rnm^2_{\ell^2(\Z^n)}\leq C_2\sum_{j=1}^m|p_j(\omega)|^2,~~\text{ a.e. }\omega\in I.
    \]
    Now, integrating over $I$, and using the fact that
    \[
    \int_I|p_j(\omega)|^2\diff\omega=\sum_{k\in\Z^n}|a_k^j|^2, \quad j=1,\cdots,m,
    \]
    it is easy to see that $\{T_k^\theta\phi_j:k\in\Z^n, j=1,\cdots,m\}$ is a Riesz sequence with the same bounds. The completeness of the system follows from part (a) and the given hypothesis.\\
    Conversely, assume that $\{T_k^\theta\phi_j:k\in\Z^n, j=1,\cdots,m\}$ is a Riesz basis for $V$ with Riesz bounds $C_1,C_2$. Then,
    \[
    C_1\sum_{j=1}^m\sum_{k\in\Z^n}|a_k^j|^2\leq\lnm\sum_{j=1}^m\sum_{k\in\Z^n}a_k^jT_k^\theta\phi_j\rnm^2\leq C_2\sum_{j=1}^m\sum_{k\in\Z^n}|a_k^j|^2,
    \]
    holds for every $\{a_k^j\}_{k\in\Z^n}\in c_{00}$, for $j=1,\cdots,m$. Now, proceeding as in the necessary part, we obtain that
    \[
    C_1\int_I\sum_{j=1}^m|p_j(\omega)|^2\diff\omega\leq \int_I\lnm \sum_{j=1}^mp_j(\omega)\tau^\theta(\phi_j)(\omega)\rnm^2_{\ell^2(\Z^n)}\diff\omega\leq C_2\int_I \sum_{j=1}^m|p_j(\omega)|^2\diff\omega,
    %~~\text{ a.e. }\omega\in I,
    \]
    holds for every trigonometric polynomial $p_j$. Using Lusin's theorem and the dominated convergence theorem, we can show that
    \[
    C_1\int_I\sum_{j=1}^m|m_j(\omega)|^2\diff\omega\leq \int_I\lnm \sum_{j=1}^mm_j(\omega)\tau^\theta(\phi_j)(\omega)\rnm^2_{\ell^2(\Z^n)}\diff\omega\leq C_2\int_I\sum_{j=1}^m|m_j(\omega)|^2\diff\omega,
    %~~\text{ a.e. }\omega\in I,
    \]
    holds for any family $\{m_j\in L^\infty(I):j=1,\cdots,m\}$. Then, proceeding as in Theorem 2.3 in \cite{Bownik} one can show that
    %Our aim is to show that
    \[
    C_1\sum_{j=1}^m|c_j|^2\leq\lnm\sum_{j=1}^m c_j\tau^\theta(\phi_j)(\omega)\rnm^2_{\ell^2}\leq C_2\sum_{j=1}^m|c_j|^2,
    \]
    for all $\{c_j\}\in\C^m$, a.e. $\omega\in I$. 
%    Suppose that this is not true. Then there exist measurable $D$ with $\mu(D)>0$ and sequences $\{d_j\}\in\C^m$ such that one of the following holds.
%    \begin{align*}
%        \lnm \sum_{j=1}^md_j\tau^\theta(\phi_j)(\omega)\rnm^2_{\ell^2}&>(M+\epsilon)\sum_{j=1}^m|d_j|^2\\
%        \lnm \sum_{j=1}^md_j\tau^\theta(\phi_j)(\omega)\rnm^2_{\ell^2}&<(m-\epsilon)\sum_{j=1}^m|d_j|^2,
%    \end{align*}
%    a.e. $\omega\in D$, for some $\epsilon>0$. Now choosing a family $\{m_j=d_j\bigchi_D\}$, one can show that none of the above inequality can hold. This proves our assertion.
\end{proof}

\subsection{Data approximation using $\theta$-shift invariant spaces}

Let $\fl=\{f_1,\cdots,f_m\}\subset\ltn$ be a functional data set. We construct a $\theta$-shift invariant space $V$ such that $\sum\limits_{j=1}^m\|f_j-P_Vf_j\|^2$ is minimal. Towards this end, we make use of the following results. 
\begin{theorem}[\cite{AldroubiACHA07}]\label{th: aldroubi}
    %Let $\Hi$ be an infinite dimensional Hilbert space, 
    Given is a data set $\mathcal{F}=\{f_1,\cdots, f_m\}$ in an infinite dimensional Hilbert space $\Hi$ and $\ell\leq m$. Let $\lambda_1\geq\cdots\geq\lambda_m\geq 0$ be the eigenvalues of the matrix $[\mathbf{B}(\mathcal{F})]_{i,j}=\lng f_i,f_j\rng$ and $y_1,\cdots y_m\in\C^m$ with $y_i=(y_{i1},\cdots,y_{im})^t$ be orthonormal eigenvectors associated to the eigenvalues $\lambda_1,\cdots,\lambda_m$. Define
    \[
    q_i=\Tilde{\sigma_i}\sum_{j=1}^my_{ij}f_j,\quad i=1,\cdots,\ell,
    \]
    where $\Tilde{\sigma_i}=\lambda_i^{-1/2}$ if $\lambda\neq 0$ and $\Tilde{\sigma_i}=0$, otherwise. Then $\{q_1,\cdots,q_\ell\}$ is a Parseval frame for $M:=\Span\{q_1,\cdots,q_\ell\}$ and the subspace $M$ is optimal in the sense that
    \[
    \sum_{i=1}^m\|f_i-P_Mf_i\|^2\leq \sum_{i=1}^m\|f_i-P_{M^\prime}f_i\|^2,~~\text{ for all }M^\prime\subset\Hi,~~\text{ with } dim(M^\prime)\leq \ell.
    \]
    Moreover,
    \[
    E(\mathcal{F},\ell)=\sum_{i=1}^m\|f_i-P_Mf_i\|^2=\sum_{i=\ell+1}^m\lambda_i.
    \]
\end{theorem}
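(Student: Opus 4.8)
The plan is to convert the minimization into a maximization and then reduce the whole question to the spectral analysis of a single finite-rank positive operator, turning it into the Ky Fan (equivalently, Eckart--Young) extremal principle. Since $\sum_{i=1}^m\|f_i-P_{M'}f_i\|^2=\sum_{i=1}^m\|f_i\|^2-\sum_{i=1}^m\|P_{M'}f_i\|^2$ and the first sum does not depend on $M'$, minimizing the error over all $M'\subset\Hi$ with $\dim M'\le\ell$ is equivalent to maximizing $\sum_{i=1}^m\|P_{M'}f_i\|^2$. If $\{u_1,\dots,u_r\}$, with $r=\dim M'\le\ell$, is an orthonormal basis of $M'$, then $\sum_{i=1}^m\|P_{M'}f_i\|^2=\sum_{k=1}^r\sum_{i=1}^m|\langle f_i,u_k\rangle|^2=\sum_{k=1}^r\langle\mathbf{G}u_k,u_k\rangle=\operatorname{tr}(\mathbf{G}P_{M'})$, where $\mathbf{G}:=\sum_{i=1}^m\langle\,\cdot\,,f_i\rangle f_i$ is a positive, self-adjoint, finite-rank operator on $\Hi$ with $\operatorname{ran}\mathbf{G}\subseteq V_0:=\Span\{f_1,\dots,f_m\}$.

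Next I would relate $\mathbf{G}$ to the Gram matrix $\mathbf{B}$. Let $T\colon\C^m\to\Hi$ be the synthesis operator $Tc=\sum_{j=1}^m c_jf_j$; then $\mathbf{G}=TT^*$ and $T^*T=\mathbf{B}$. The classical fact that $TT^*$ and $T^*T$ share the same non-zero eigenvalues (with multiplicities) shows that the non-zero spectrum of $\mathbf{G}$ is precisely $\{\lambda_i:\lambda_i\neq0\}$. For each $i$ with $\lambda_i\neq0$ put $q_i=\lambda_i^{-1/2}Ty_i$; then $\mathbf{G}q_i=\lambda_i^{-1/2}TT^*Ty_i=\lambda_i^{-1/2}T\mathbf{B}y_i=\lambda_iq_i$, and $\langle q_i,q_{i'}\rangle=(\lambda_i\lambda_{i'})^{-1/2}\langle y_i,T^*Ty_{i'}\rangle=(\lambda_i\lambda_{i'})^{-1/2}\lambda_{i'}\langle y_i,y_{i'}\rangle=\delta_{ii'}$, using the orthonormality of the $y_i$. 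Thus the non-vanishing $q_i$ form an orthonormal basis of $M=\Span\{q_1,\dots,q_\ell\}$ (the $q_i$ with $\lambda_i=0$ are zero), and since an orthonormal basis is a Parseval frame for its span, the first assertion follows.

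For optimality and the error formula I would invoke the Ky Fan maximum principle for the positive self-adjoint operator $\mathbf{G}$: over all orthonormal systems $\{u_1,\dots,u_r\}$ with $r\le\ell$ one has $\sum_{k=1}^r\langle\mathbf{G}u_k,u_k\rangle\le\sum_{i=1}^\ell\lambda_i$, with equality for the top $\ell$ eigenvectors, i.e.\ for $q_1,\dots,q_\ell$. Feeding this back through the equivalence from the first paragraph yields $\sum_{i=1}^m\|f_i-P_Mf_i\|^2\le\sum_{i=1}^m\|f_i-P_{M'}f_i\|^2$ for every admissible $M'$. Finally, since $\sum_{i=1}^m\|f_i\|^2=\operatorname{tr}\mathbf{B}=\sum_{i=1}^m\lambda_i$, the minimal error equals $\sum_{i=1}^m\lambda_i-\sum_{i=1}^\ell\lambda_i=\sum_{i=\ell+1}^m\lambda_i$, which is the claimed value of $E(\mathcal{F},\ell)$.

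The main obstacle is that $\Hi$ is infinite-dimensional, so neither the spectral decomposition nor the Ky Fan principle is available off the shelf. Both difficulties are resolved by the finite-rankness of $\mathbf{G}$: since each $f_i\in V_0$ one has $\langle\mathbf{G}u,u\rangle=\langle\mathbf{G}P_{V_0}u,P_{V_0}u\rangle$, so an optimal subspace may be taken inside the finite-dimensional $V_0$ without loss, after which $\mathbf{G}$ restricts to a positive semi-definite matrix and the classical Eckart--Young/Ky Fan theorem applies verbatim. Care is only needed to confirm that projecting the $u_k$ into $V_0$ and re-orthonormalizing neither increases the dimension nor decreases the objective; I expect this bookkeeping, rather than any deep idea, to be the crux.
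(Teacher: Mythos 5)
The paper offers no proof of this statement to compare yours against: it is quoted from \cite{AldroubiACHA07} (note the citation in the theorem header) and is used as a black box in the proof of Theorem \ref{th: data sis}. Judged on its own, your argument is correct and is essentially the standard proof of the Aldroubi--Cabrelli--Hardin--Molter result: trade minimization of the error for maximization of $\operatorname{tr}(\mathbf{G}P_{M'})$ with $\mathbf{G}=TT^{*}$, exploit that $TT^{*}$ and $T^{*}T$ have the same nonzero spectrum, and finish with Ky Fan/Eckart--Young on the finite-dimensional span $V_0=\Span\{f_1,\dots,f_m\}$. The $V_0$-reduction you deferred as ``bookkeeping'' is indeed routine: if $f\in V_0$ and $v=P_{M'}f$, then $P_{V_0}v\in P_{V_0}M'$ and $\|f-P_{V_0}v\|=\|P_{V_0}(f-v)\|\le\|f-v\|$, so the subspace $P_{V_0}M'\subset V_0$, of dimension at most $\dim M'$, performs at least as well; alternatively, Ky Fan's maximum principle holds verbatim for positive finite-rank operators on $\Hi$, so the reduction can be dispensed with entirely.

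One wrinkle deserves flagging. With $[\mathbf{B}]_{ij}=\langle f_i,f_j\rangle$ and the inner product linear in the first argument, the synthesis operator satisfies $T^{*}T=\overline{\mathbf{B}}=\mathbf{B}^{\top}$, not $\mathbf{B}$; consequently the identities $\mathbf{G}q_i=\lambda_i q_i$ and $\langle q_i,q_{i'}\rangle=\delta_{ii'}$ hold for $q_i=\tilde{\sigma}_i\sum_{j}\overline{y_{ij}}f_j$, i.e.\ the $y_i$ must be taken as \emph{left} eigenvectors of $\mathbf{B}$ (one can check on $m=2$ with a non-real $\langle f_1,f_2\rangle$ that the formula with right eigenvectors fails to produce unit vectors). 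This conjugation is glossed over in the statement as transcribed here --- the original formulation in \cite{AldroubiACHA07} uses left eigenvectors --- and the paper silently restores it when applying the theorem: in the proof of Theorem \ref{th: data sis} the vectors $y_i(\omega)=U_i^{*}(\omega)$ are explicitly taken to be left eigenvectors of $G^{\theta}_{\fl}(\omega)$. So this is a defect of the statement rather than of your argument, but your identity $T^{*}T=\mathbf{B}$ inherits it and should be adjusted accordingly.
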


\begin{lemma}[\cite{RonShen95}]\label{lemma: ron shen}
    Let $G=G(\omega)$ be an $m\times m$ self-adjoint matrix of measurable functions on a measurable subset $E\subset\R^n$ with eigenvalues $\lambda_1(\omega)\geq\cdots\geq\lambda_m(\omega)\geq 0$. Then, the eigenvalues $\lambda_i(\omega)$, $i=1,\cdots,m$, are measurable on $E$ and there exists an $m\times m$ matrix of measurable functions $U=U(\omega)$ such that $U^*(\omega)U(\omega)=I$, a.e. $\omega\in E$, and
    \[
    G(\omega)=U(\omega)\Lambda(\omega)U^*(\omega), ~~\text{ a.e. }\omega\in E,
    \]
    where $\Lambda(\omega)=\diag(\lambda_1(\omega),\cdots,\lambda_m(\omega))$.
\end{lemma}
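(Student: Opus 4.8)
The plan is to prove the two assertions in turn, first the measurability of the ordered eigenvalues and then the existence of a measurable diagonalizing $U$; since the statement is classical and we use it only as quoted from \cite{RonShen95}, I will keep the argument at the level of a sketch. For the eigenvalues, I would observe that the coefficients of the characteristic polynomial $p_\omega(\lambda)=\det(\lambda I-G(\omega))$ are polynomials in the entries of $G(\omega)$ and hence measurable in $\omega$. Because $G(\omega)$ is self-adjoint, all $m$ roots are real, and the map that sends the coefficient vector of a monic real-rooted polynomial to the list of its roots in decreasing order is continuous; composing this continuous map with the measurable coefficient functions shows that each $\lambda_i$ is measurable on $E$. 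Alternatively, the Courant--Fischer min-max characterization expresses $\lambda_i(\omega)$ through suprema and infima that may be reduced to a countable dense family of subspaces, again giving measurability.

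For the measurable diagonalization, the key device is to partition $E$ into finitely many measurable pieces on which the coincidence pattern of the eigenvalues is constant. Each difference $\lambda_i-\lambda_{i+1}$ is measurable, so the set on which a prescribed subcollection of these differences vanishes while the rest are strictly positive is measurable, and these sets cover $E$. On a fixed piece the distinct eigenvalues $\mu_1>\cdots>\mu_r$ and their multiplicities $d_1,\dots,d_r$ are constant, and each spectral projection admits the Lagrange-type representation
\[
P_{\mu_j}(\omega)=\prod_{i\neq j}\frac{G(\omega)-\mu_i(\omega)I}{\mu_j(\omega)-\mu_i(\omega)},
\]
which is measurable in $\omega$ and of constant rank $d_j$.

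It then remains to extract from each measurable family of rank-$d_j$ projections a measurable orthonormal basis of its range; assembling these bases over all $j$ and all pieces produces the columns of $U(\omega)$, which is unitary by construction and satisfies $G=U\Lambda U^*$. First I would apply Gram--Schmidt to the columns of $P_{\mu_j}(\omega)$, and, since the rank is constant, refine the partition once more according to which fixed set of $d_j$ columns is linearly independent; Gram--Schmidt on a fixed selection of columns is built from inner products, scalar multiplications, and normalizations, all of which preserve measurability because the relevant norms stay bounded away from zero on the piece in question. The hard part is precisely this last step: at eigenvalue crossings the individual eigenvectors cannot in general be chosen continuously, so one must settle for a measurable rather than continuous selection, and it is the constant-rank partition that makes the explicit Gram--Schmidt argument legitimate. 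A less constructive but cleaner alternative would be to apply the Kuratowski--Ryll-Nardzewski measurable selection theorem to the closed-valued multifunction $\omega\mapsto\{v:\|v\|=1,\ G(\omega)v=\lambda_1(\omega)v\}$ to select the top eigenvector, and then to induct on the measurably varying orthogonal complement.
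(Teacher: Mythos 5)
The paper never proves this lemma: it is imported verbatim from \cite{RonShen95} and used as a black box in the proof of Theorem \ref{th: data sis}, so there is no internal argument to compare yours against; what you have done is supply the missing classical proof, and your sketch is correct and essentially the standard one. The measurability of the ordered eigenvalues (measurable characteristic-polynomial coefficients composed with the continuous map sending a monic real-rooted polynomial to its decreasingly ordered root vector) is sound, as is the Courant--Fischer alternative. The diagonalization step is also structured correctly: partition $E$ into the finitely many measurable sets on which the coincidence pattern of $\lambda_1\geq\cdots\geq\lambda_m$ is constant; on each piece the spectral projections are given by the Lagrange interpolation formula, which is legitimate there because $G(\omega)$ is self-adjoint and the distinct eigenvalues $\mu_1(\omega)>\cdots>\mu_r(\omega)$ are measurable with nonvanishing differences; then extract a measurable orthonormal basis of each constant-rank range by Gram--Schmidt on a measurably chosen set of $d_j$ independent columns. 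Two points of hygiene are worth flagging. First, measurability of the Gram--Schmidt output requires only that the relevant Gram determinants be nonzero pointwise on each refined piece, not ``bounded away from zero''; no uniform lower bound holds or is needed. Second, when you assemble $U(\omega)$ you should disjointify the refined cover (e.g.\ always take the lexicographically first admissible column selection) and order the resulting columns consistently with $\Lambda(\omega)=\diag(\lambda_1(\omega),\cdots,\lambda_m(\omega))$, so that the orthonormal vectors produced for an eigenvalue of multiplicity $d_j$ occupy exactly the slots where that value appears; since eigenspaces of a self-adjoint matrix for distinct eigenvalues are automatically orthogonal and the multiplicities sum to $m$, the resulting $U(\omega)$ is then unitary and satisfies $G(\omega)=U(\omega)\Lambda(\omega)U^*(\omega)$. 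With those routine adjustments your argument is complete, and the measurable-selection alternative you mention would work equally well.
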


Now we present the main result of this subsection.

\begin{theorem}\label{th: data sis}
    Let $\mathcal{F}=\{f_1,\cdots,f_m\}\subset L^2(\R^n)$ and $\ell\in\N$. Then, there exist $\phi_1,\cdots,\phi_\ell\in L^2(\R^n)$ such that
    \[
    \sum_{i=1}^m\|f_i-P_{V_\theta}f_i\|^2\leq\sum_{i=1}^m\|f_i-P_{W_\theta}f_i\|^2,
    \]
    holds for all $\theta$-shift invariant spaces $W_\theta$ generated by at most $\ell$ elements, where $V_\theta=\cl_{L^2(\R^n)}{\Span}\{T_k^\theta\phi_i:i=1,\cdots,\ell;\,k\in\Z^n\}$. Further, the system $\{T_k^\theta\phi_i:i=1,\cdots,\ell,k\in\Z^n\}$ forms a Parseval frame for $V_\theta$. Moreover,
    \begin{equation}\label{eq: error data approx}
    E(\mathcal{F},\ell):=\sum_{i=1}^m\|f_i-P_{V_\theta}f_i\|^2=\sum_{i=\ell+1}^m\int_I\lambda_i(\omega)\diff\omega,
    \end{equation}
    where $\lambda_1(\omega)\cdots\geq\lambda_m(\omega)\geq 0$ are eigenvalues of the matrix
    \[
    [G^\theta_\mathcal{F}]_{i,j}=\sum_{k\in\Z^n}(\ch f_i)~\widehat{}~(\omega\csc\theta+k)\overline{(\ch f_j)~\widehat{}~(\omega\csc\theta+k)}.
    \]
\end{theorem}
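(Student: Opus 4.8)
The plan is to push the entire problem through the isometric isomorphism $\tau^\theta:\ltn\to L^2\lp I,\ell^2(\Z^n)\rp$, thereby converting the single infinite-dimensional least-squares problem into a measurable family of finite-dimensional ones, each solved by Theorem \ref{th: aldroubi}. First I would invoke part (iii) of the fiber-map lemma together with the isometry of $\tau^\theta$ to write, for any closed $\theta$-shift invariant $W_\theta$,
\[
\sum_{i=1}^m\|f_i-P_{W_\theta}f_i\|^2=\int_I\sum_{i=1}^m\lnm\tau^\theta(f_i)(\omega)-P_{J_{W_\theta}^\theta(\omega)}\bigl(\tau^\theta(f_i)(\omega)\bigr)\rnm_{\ell^2}^2\diff\omega.
\]
If $W_\theta$ is generated by $\ell'\le\ell$ elements, then part (iv)(a) shows that their fibers span $J_{W_\theta}^\theta(\omega)$, so $\dim J_{W_\theta}^\theta(\omega)\le\ell$ for a.e.\ $\omega$; conversely, a measurable assignment $\omega\mapsto J(\omega)$ spanned by $\ell$ fiber vectors corresponds, via $(\tau^\theta)^{-1}$ and part (iv)(a), to a genuine $\theta$-shift invariant space generated by at most $\ell$ elements. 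This equivalence is the conceptual core: it reduces the problem to minimizing the integrand pointwise in $\omega$ over subspaces of dimension at most $\ell$.

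Second, I would fix $\omega\in I$ and apply Theorem \ref{th: aldroubi} to the finite data set $\{\tau^\theta(f_1)(\omega),\cdots,\tau^\theta(f_m)(\omega)\}\subset\ell^2(\Z^n)$, whose Gramian is exactly the matrix from the statement, since
\[
[G^\theta_\mathcal{F}(\omega)]_{i,j}=\lng\tau^\theta(f_i)(\omega),\tau^\theta(f_j)(\omega)\rng_{\ell^2}=\sum_{k\in\Z^n}(\ch f_i)~\widehat{}~(\omega\csc\theta+k)\,\overline{(\ch f_j)~\widehat{}~(\omega\csc\theta+k)}.
\]
Theorem \ref{th: aldroubi} then produces an optimal fiber space of dimension at most $\ell$, spanned by the Parseval-frame vectors $q_i(\omega)=\widetilde{\sigma}_i(\omega)\sum_{j=1}^m y_{ij}(\omega)\,\tau^\theta(f_j)(\omega)$, with fiberwise error $\sum_{i=\ell+1}^m\lambda_i(\omega)$, where $\lambda_1(\omega)\ge\cdots\ge\lambda_m(\omega)\ge 0$ are the eigenvalues of $G^\theta_\mathcal{F}(\omega)$.

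Third --- and this is where I expect the main obstacle --- I must assemble these pointwise optima into honest $L^2$ functions $\phi_1,\cdots,\phi_\ell$, which demands a measurable selection of the eigendata. Here Lemma \ref{lemma: ron shen} is the decisive tool: after checking that $\omega\mapsto G^\theta_\mathcal{F}(\omega)$ is a self-adjoint matrix of measurable functions, it furnishes measurable eigenvalues $\lambda_i(\omega)$ and a measurable unitary diagonalization, from which measurable eigenvectors $y_i(\omega)$ and the measurable scalars $\widetilde{\sigma}_i(\omega)$ (equal to $\lambda_i(\omega)^{-1/2}$ where $\lambda_i(\omega)\neq 0$ and $0$ otherwise) are read off. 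The $q_i(\omega)$ are then measurable, and since each is a Parseval-frame vector one has $\|q_i(\omega)\|_{\ell^2}\le 1$, whence $\int_I\|q_i(\omega)\|_{\ell^2}^2\diff\omega\le|I|<\infty$; thus $q_i\in L^2\lp I,\ell^2(\Z^n)\rp$ and I may set $\phi_i:=(\tau^\theta)^{-1}(q_i)\in\ltn$.

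Finally, I would put $V_\theta:=\cl_{\ltn}\Span\{T_k^\theta\phi_i\}$ and close the argument. Since $\tau^\theta(\phi_i)(\omega)=q_i(\omega)$ spans the optimal fiber space for a.e.\ $\omega$, part (iv)(a) identifies $J_{V_\theta}^\theta(\omega)$ with that optimum, and part (iv)(b) promotes the fiberwise Parseval-frame property of $\{q_i(\omega)\}$ to the statement that $\{T_k^\theta\phi_i\}$ is a Parseval frame for $V_\theta$. Combining the pointwise minimality from Theorem \ref{th: aldroubi} with the displayed integral identity gives global minimality against every admissible $W_\theta$, and integrating the fiberwise error over $I$ yields \eqref{eq: error data approx}. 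The delicate points to verify are the measurability of $\omega\mapsto G^\theta_\mathcal{F}(\omega)$ and, above all, the justification that pointwise minimization over $\omega$ really minimizes the integral over the admissible class --- that is, that no measurability obstruction prevents the fiberwise optima from being realized \emph{simultaneously} by one genuine finitely generated $\theta$-shift invariant space.
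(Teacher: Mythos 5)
Your proposal is correct and follows essentially the same route as the paper's proof: fiberize the error functional via the isometry $\tau^\theta$ and the projection identity \eqref{eq: tau theta proj}, apply Theorem \ref{th: aldroubi} pointwise to the fiber data $\{\tau^\theta(f_j)(\omega)\}$ whose Gramian is exactly $G^\theta_{\mathcal{F}}(\omega)$, invoke Lemma \ref{lemma: ron shen} to obtain measurable eigendata $\lambda_i(\omega)$, $y_i(\omega)$, and pull the optimal fiber spaces back to a finitely generated $\theta$-shift invariant space via parts (iii) and (iv) of the fiber-map lemma. The only cosmetic difference is that you certify $q_i\in L^2\lp I,\ell^2(\Z^n)\rp$ by the general bound $\|q_i(\omega)\|_{\ell^2}\leq 1$ for Parseval-frame vectors and then set $\phi_i:=(\tau^\theta)^{-1}(q_i)$, whereas the paper computes $\|q_i(\omega)\|_{\ell^2}^2$ explicitly (it equals the indicator of $\{\omega:\lambda_i(\omega)>0\}$) and realizes $\phi_i$ concretely through the relation $(\ch\phi_i)~\widehat{}~(\omega)=h_i(\omega\sin\theta)$.
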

\begin{proof}
    %We use similar ideas to prove this theorem as in the proof of Theorem \ref{th: aldroubi}.\\
    We apply Lemma \ref{lemma: ron shen} to the matrix $G_{\fl}^\theta(\omega)$ of $|\sin\theta|\,\Z^n$-periodic functions and choose a unitary matrix $U(\omega)$ of $|\sin\theta|\,\Z^n$-periodic functions such that
    \begin{equation}\label{eq: sec5 eq1}
    G_{\fl}^\theta(\omega)=U(\omega)\Lambda(\omega)U^*(\omega),~~\omega\in I,
    \end{equation}
    where $\Lambda(\omega)=\diag\lp\lambda_1(\omega),\cdots,\lambda_m(\omega)\rp$. Let $U_i(\omega)$ denote the $i^{th}$ row of $U(\omega)$. Then, multiplying \eqref{eq: sec5 eq1} by $U^*(\omega)$ on the left, we obtain
    \[
    U^*(\omega)G_{\fl}^\theta(\omega)=\Lambda(\omega)U^*(\omega).
    \]
    This shows that $y_i(\omega)=U_i^*(\omega)$ is a left eigenvector of $G_{\fl}^\theta(\omega)$ associated to the eigenvalue $\lambda_i(\omega)$. Moreover, the left eigenvectors $y_i(\omega)=(y_{i1}(\omega),\cdots,y_{im}(\omega))^\top$, $i=1,\cdots,m$, form an orthonormal basis for $\C^m$.\\
    Now for each $\omega\in I$, apply Theorem \ref{th: aldroubi} to the Hilbert space $\ell^2(\Z^n)$ with the data set
    \[
    \fl_\omega=\{\tau^\theta f_1(\omega),\cdots,\tau^\theta f_m(\omega)\}.
    \]
    Define,
    \[
    q_i(\omega):=\Tilde{\sigma_i}(\omega)\sum_{j=1}^my_{i,j}(\omega)\tau^\theta f_j(\omega),\quad i=1,\cdots,\ell,
    \]
    where $\Tilde{\sigma_i}(\omega)=\lambda_i(\omega)^{-1/2}$, if $\lambda_i(\omega)\neq 0$ and $\Tilde{\sigma_i}(\omega)=0$, otherwise.
    % Observe that
    % \[
    % \mathbf{B}(\fl_\omega)=G_\fl^\theta(\omega), ~~\text{ a.e. }\omega\in I.
    % \]
    % Now appealing to Theorem \ref{th: aldroubi}, we get that 
    Then, $S_\omega^\theta:=\Span\{q_1(\omega),\cdots,q_\ell(\omega)\}$ satisfies
    \begin{equation}\label{eq: sec 5 eq2}
    \sum_{j=1}^m\lnm\tau^\theta (f_j)(\omega)-P_{S_\omega^\theta}\tau^\theta(f_j)(\omega)\rnm^2\leq\sum_{j=1}^m\lnm\tau^\theta(f_j)(\omega)-P_W\tau^\theta(f_j)(\omega)\rnm^2,
    \end{equation}
    for any subspace $W$ of $\ell^2(\Z^n)$ with $\dim(W)\leq \ell$,
    \begin{equation}\label{eq: sec 5 eq3}
    \sum_{j=1}^m\lnm\tau^\theta (f_j)(\omega)-P_{S_\omega^\theta}\tau^\theta(f_j)(\omega)\rnm^2=\sum_{i=\ell+1}^m\lambda_i(\omega),
    \end{equation}
    and the collection
    $\{q_1(\omega),\cdots,q_\ell(\omega)\}$ forms a Parseval frame for $S_\omega^\theta$.\\
    Define
    \[
    h_i(\omega):=\Tilde{\sigma_i}(\omega)\sum_{j=1}^my_{ij}(\omega)(\ch f_j)~\widehat{}~(\omega\csc\theta),~~i=1,\cdots,\ell.
    \]
    Then, $h_i\in L^2(\R^n)$. In fact,
    \[
    \intn|h_i(\omega)|^2\diff\omega=\int_I\sum_{k\in\Z^n}|h_i(\omega+k\sin\theta)|^2\diff\omega
    \]
    and
    \begin{align*}
        \sum_{k\in\Z^n}|h_i(\omega+k\sin\theta)|^2&=\sum_{k\in\Z^n}h_i(\omega+k\sin\theta)\overline{h_i(\omega+k\sin\theta)}=\Tilde{\sigma_i}^2(\omega)\sum_{j=1}^my_{ij}(\omega)\sum_{s=1}^m\overline{y_{is}(\omega)}[G_\fl^\theta(\omega)]_{js}\\
        &=\lambda_i(\omega)\Tilde{\sigma}(\omega)=\bigchi_{\lc\omega:\lambda_i(\omega)>0\rc},
    \end{align*}
    using the fact that $(\overline{y_{i1}},\cdots,\overline{y_{ij}})^\top$ is a left eigenvector of $G_\fl^\theta(\omega)$ with the eigenvalue $\lambda_i(\omega)$.\\
    Now choose $\phi_i\in \ltn$ such that
    \[
    (\ch\phi_i)~\widehat{}~(\omega)=h_i(\omega\sin\theta).
    \]
    Let $V=\cl_{L^2(\R^n)}{\Span}\{T_k^\theta\phi_i:k\in\Z^n,~i=1,\cdots,\ell\}$. Then $\{\tau_\theta\phi_i(\omega):i=1,\cdots,\ell\}$ spans $J_V^\theta(\omega)$. Further,
    \[
    \tau^\theta(\phi_i)(\omega)(k)=(\ch \phi_i)~\widehat{}~(\omega\csc\theta+k)=h_i(\omega+k\sin\theta)=q_i(\omega)(k).
    \]
    This shows that $J_V^\theta(\omega)=S^\theta_\omega$. Moreover, integrating \eqref{eq: sec 5 eq2}, \eqref{eq: sec 5 eq3}, we get the optimality of the space $V$ and the error estimate. Finally, using the fact that $\{\tau^\theta(\phi_i)(\omega):i=1,\cdots,\ell\}$ is a Parseval frame for $J_V^\theta(\omega)$, for a.e. $\omega\in I$, we conclude that $\{T_k^\theta\phi_i:k\in\Z^n,~i=1,\cdots,\ell\}$ is a Parseval frame for $V$.
\end{proof}

%\newpage
\begin{example}\label{ex4.6}
(i) Let $n=1$. For $m\in\N$, consider 
\[
f_m(x)=me^{-\pi ix^2\cot\theta}e^{\pi im^2x}\sinc (mx),~~x\in\R.
\]

\begin{figure}[h!]
\begin{center}
\includegraphics[width=4cm, height= 3cm]{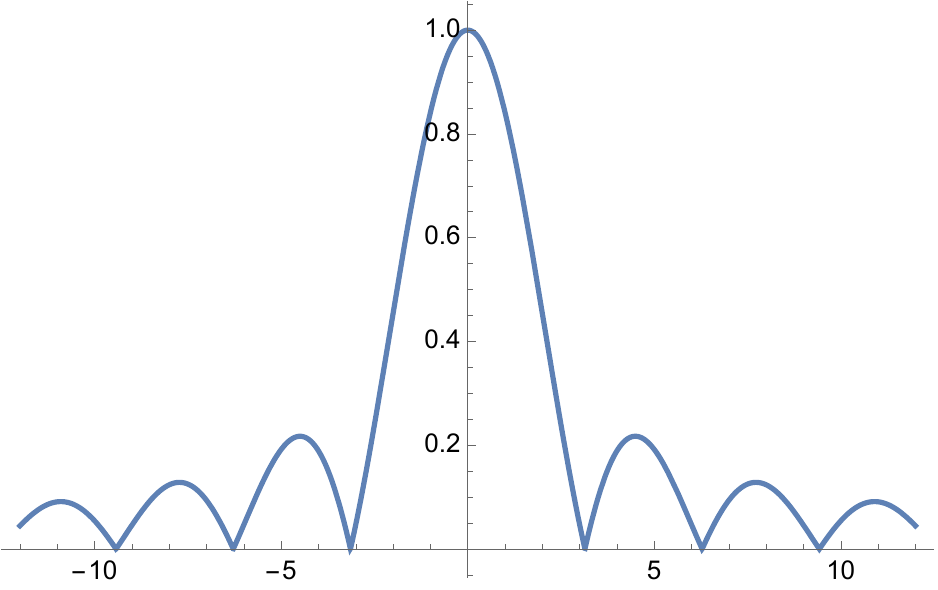}\hspace*{1cm}
\includegraphics[width=4cm, height= 3cm]{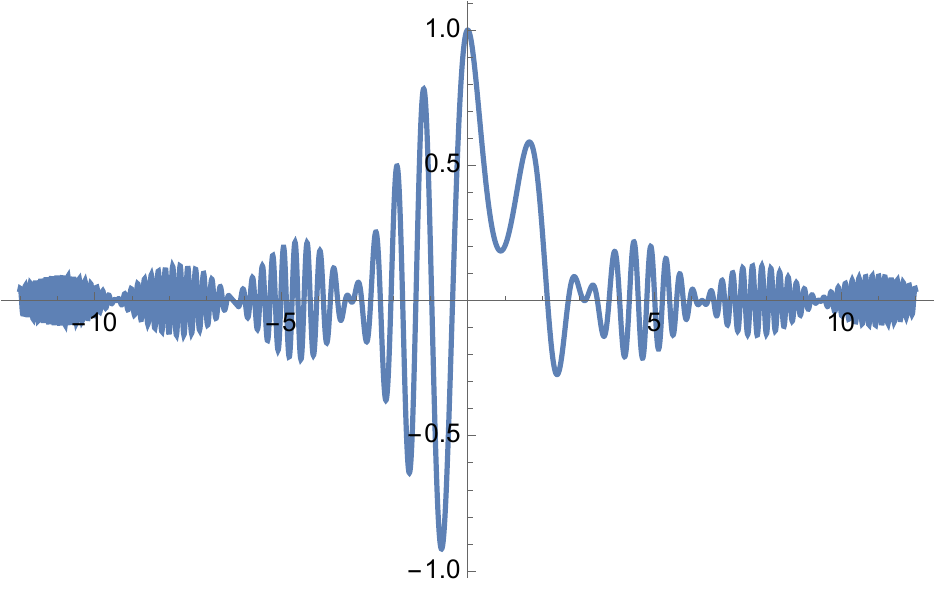}\hspace*{1cm}
\includegraphics[width=4cm, height= 3cm]{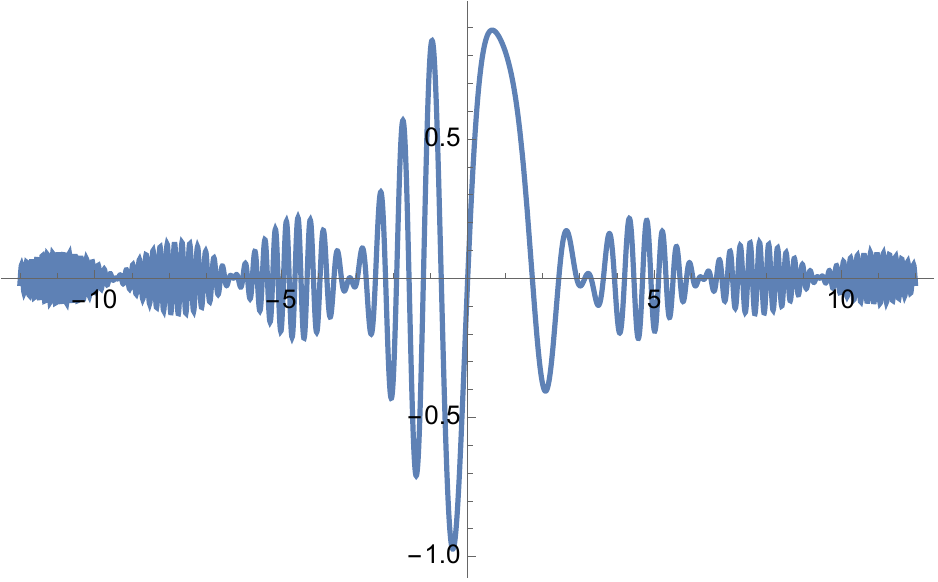}\vspace*{0.5cm}
\includegraphics[width=4cm, height= 3cm]{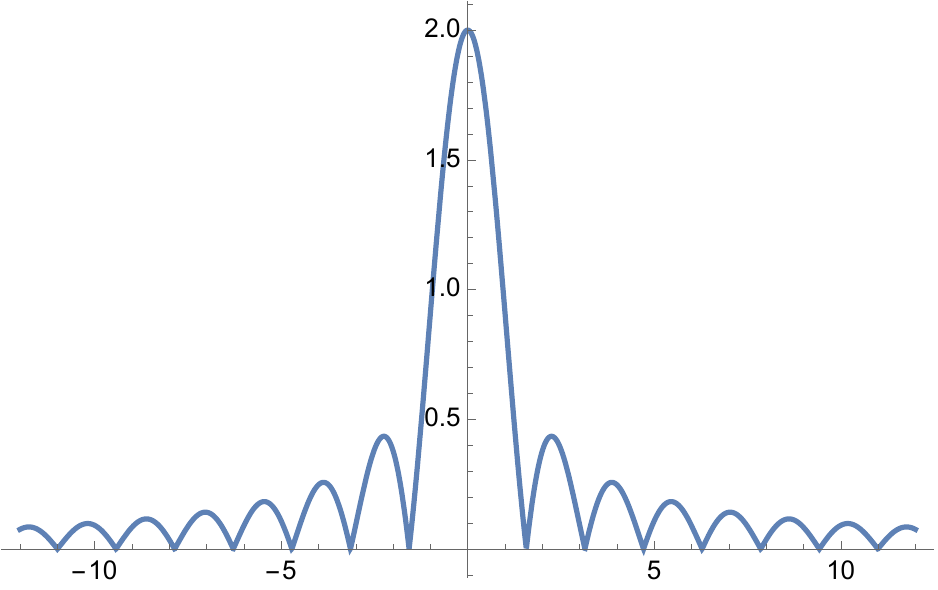}\hspace*{1cm}
\includegraphics[width=4cm, height= 3cm]{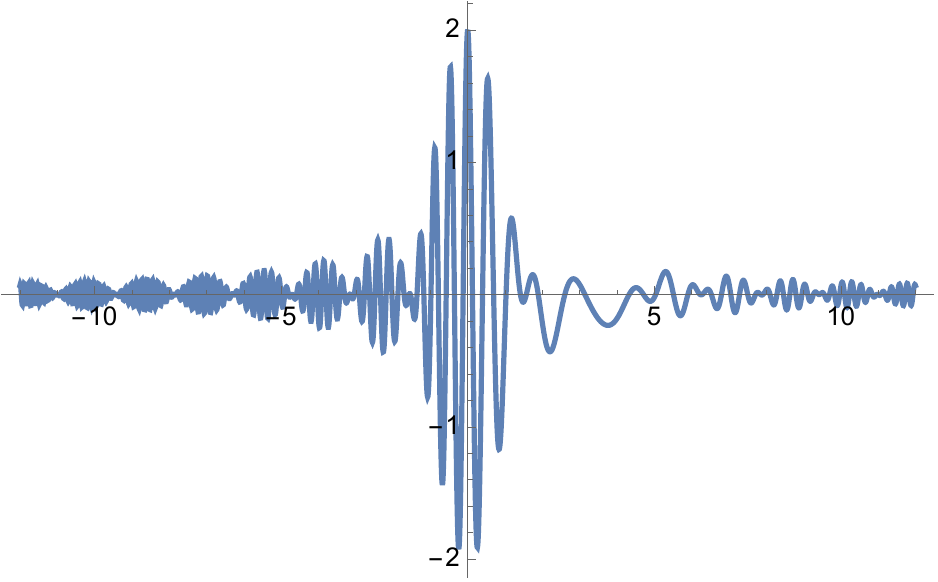}\hspace*{1cm}
\includegraphics[width=4cm, height= 3cm]{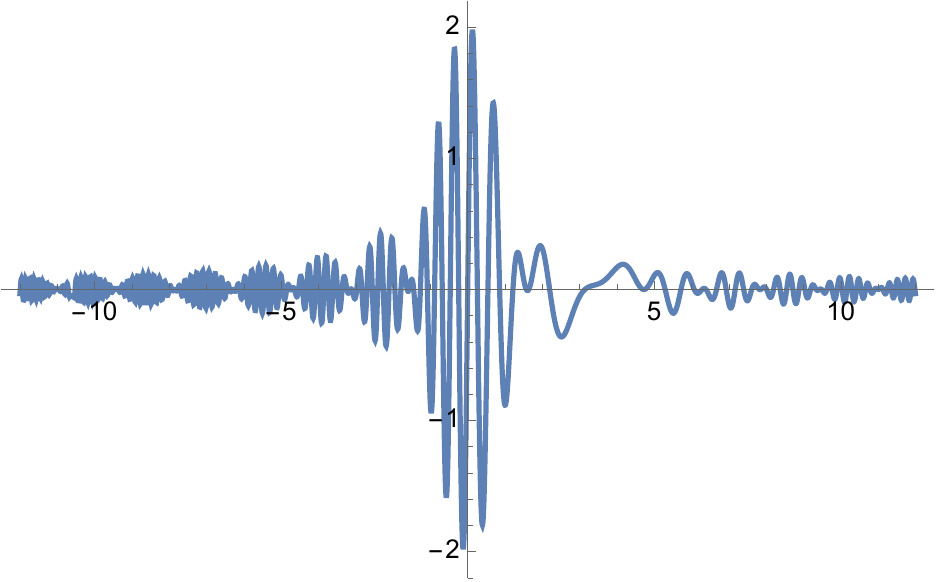}\vspace*{0.5cm}
\includegraphics[width=4cm, height= 3cm]{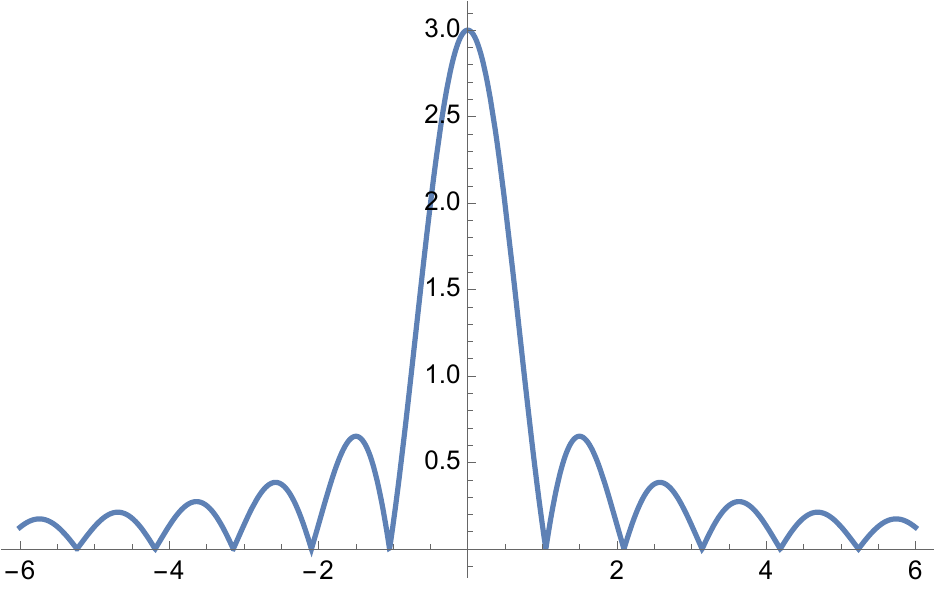}\hspace*{1cm}
\includegraphics[width=4cm, height= 3cm]{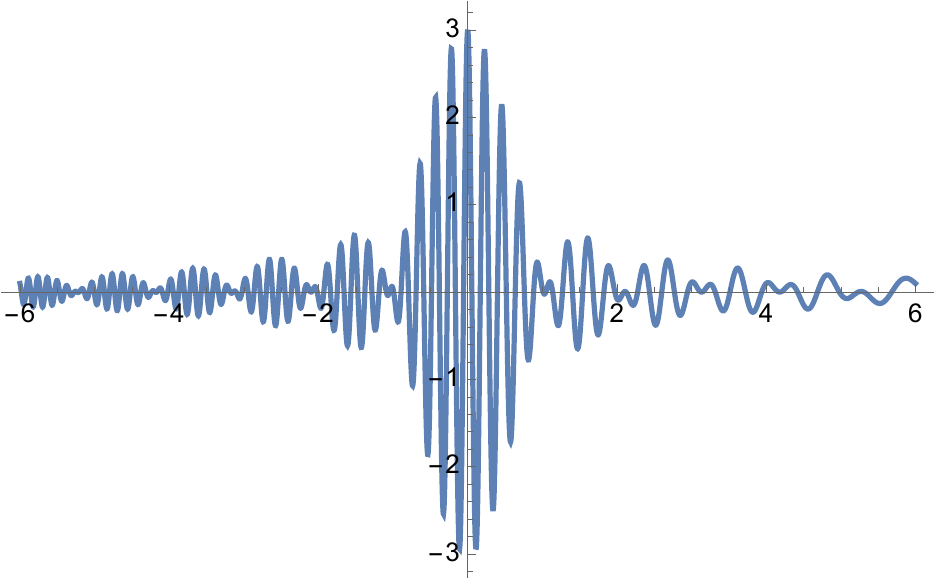}\hspace*{1cm}
\includegraphics[width=4cm, height= 3cm]{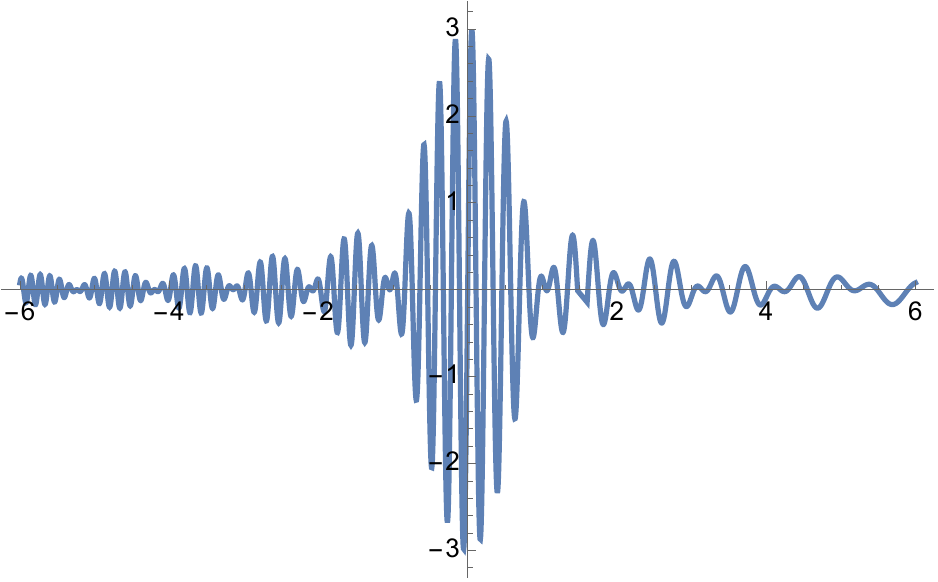}\\
\caption{The absolute value and the real and imaginary parts of $f_1$ (top), $f_2$ (middle), and  $f_3$  (bottom) for $\theta = \pi/3$.}
\end{center}
\end{figure}

Then, $(\ch f)~\widehat{}~(\omega)=\bigchi_{[0,m)}(\omega)$. Let $G^\theta_{\mathcal{F}_m}(\omega)$ be the Gramian associated with $\fl_m=\{f_1,f_2,\cdots, f_m\}$. Then
\begin{align*}
[G^\theta_{\mathcal{F}_m}(\omega)]_{i,j}&=\sum_{k\in\Z}(\ch f_i)~\widehat{}~(\omega\csc\theta+k)(\ch f_j)~\widehat{}~(\omega\csc\theta+k)\\
&=\sum_{k\in\Z}\bigchi_{[0,i|\sin\theta|)}(\omega+k\sin\theta)\bigchi_{[0,j|\sin\theta|)}(\omega+k\sin\theta)\\
&=\min\{i,j\}.
\end{align*}
Now, take $m=4$. Then,
\[
G^\theta_{\mathcal{F}_4}(\omega)=
\begin{pmatrix}
1 & 1 & 1 & 1\\
1 & 2 & 2 & 2\\
1 & 2 & 3 & 3\\
1 & 2 & 3 & 4
\end{pmatrix}.
\]
The eigenvalues and corresponding eigenvectors of $G^\theta_{\mathcal{F}_4}(\omega)$ are
\begin{align*}
\lambda_1 &= 3+2\sqrt{7}\cos\left(\tfrac13\arctan\left(\tfrac{\sqrt{3}}{37}\right)\right),\\
\lambda_2 &= 1,\\
\lambda_3 &= 3+\sqrt{21} \sin \left(\tfrac{1}{3} \arctan\left(\tfrac{\sqrt{3}}{37}\right)\right)-\sqrt{7} \cos \left(\tfrac{1}{3} \arctan\left(\tfrac{\sqrt{3}}{37}\right)\right),\\
\lambda_4 &= 3-\sqrt{21} \sin \left(\tfrac{1}{3} \arctan\left(\tfrac{\sqrt{3}}{37}\right)\right)-\sqrt{7} \cos \left(\tfrac{1}{3} \arctan\left(\tfrac{\sqrt{3}}{37}\right)\right),\\
\end{align*}
and
\begin{align*}
%&=(y_{11}(\omega), y_{12}(\omega), y_{13}(\omega), y_{14}(\omega))\\
y_1&= \left(\cos\tfrac\pi9 - \sqrt{3}\sin\tfrac\pi9,\tfrac{1}{2} \left(2-2 \sin \left(\tfrac{\pi
}{18}\right)+\sqrt{3} \sin \left(\tfrac{\pi}{9}\right)-\cos \left(\tfrac{\pi }{9}\right)\right),2 \cos \left(\tfrac{\pi }{9}\right)-1,1\right)^\top,\\
   y_2&= \left(-1,-1,0,1\right)^\top,\\
   y_3 &= \left(\cos\tfrac\pi9 + \sqrt{3}\sin\tfrac\pi9,1-\sqrt{3} \sin \left(\tfrac{\pi }{9}\right)-\cos \left(\tfrac{\pi }{9}\right), \tfrac{1}{2} \left(-2-2 \sin \left(\tfrac{\pi}{18}\right)+\sqrt{3} \sin \left(\tfrac{\pi}{9}\right)-\cos \left(\tfrac{\pi }{9}\right)\right),1\right)^\top,\\
   y_4 &= \left(-2 \cos \left(\tfrac{\pi }{9}\right), 1+2 \cos \left(\tfrac{\pi }{9}\right),-1-\sqrt{3} \sin \left(\tfrac{\pi }{9}\right)-\cos\left(\tfrac{\pi }{9}\right),1\right)^\top
\end{align*}
respectively. We now calculate $\phi_1,~\phi_2, ~\phi_3$ using Theorem \ref{th: data sis}.

%\begin{align*}
%    \lambda_1(\omega)&=5.05,~\,\,~y_1(\omega)=(0.45,0.8,1)^t,\\
%    \lambda_2(\omega)&=0.64,~\,\,~y_2(\omega)=(-1.25,-0.55,1)^t,\\
%    \lambda_3(\omega)&=0.31,~\,\,~y_3(\omega)=(1.80,-2.25,1)^t.
%\end{align*}
\begin{align*}
(\ch \phi_j)~\widehat{}~(\omega)&=\frac{1}{\sqrt{\lambda_j}}\lp y_{j1}(\ch f_1)~\widehat{}~(\omega)+y_{j2}(\ch f_2)~\widehat{}~(\omega)+y_{j3}(\ch f_3)~\widehat{}~(\omega)+y_{j4}(\ch f_4)~\widehat{}~(\omega)\rp.
%&=0.2(\ch f_1)~\widehat{}~(\omega)+0.36(\ch f_2)~\widehat{}~(\omega)+0.45(\ch f_3)~\widehat{}~(\omega).
\end{align*}
Thus
\begin{align*}
\phi_j(x)&=\frac{y_{j1}}{\sqrt{\lambda_j}}f_1(x)+\frac{y_{j2}}{\sqrt{\lambda_j}}f_2(x)+\frac{y_{j3}}{\sqrt{\lambda_j}}f_3(x)+\frac{y_{j4}}{\sqrt{\lambda_j}}f_4(x).
%&=e^{-\pi ix^2\cot\theta}\lp 0.2e^{\pi ix}\sinc x+0.72e^{4\pi ix}\sinc(2x)+1.35e^{9\pi ix}\sinc(3x)\rp.
\end{align*}
Hence,
\begin{align*}
\phi_1(x)=& e^{-\pi ix^2\cot\theta} ( 0.1206e^{\pi ix}\sinc (x)+ 0.4534e^{4\pi ix}\sinc (2x)+0.9162 e^{9\pi ix}\sinc (3x)\\
&+1.3892 e^{16\pi ix}\sinc (4x)),\\
\phi_2(x)=&e^{-\pi ix^2\cot\theta} \lp -e^{\pi ix}\sinc (x)-2 e^{4\pi ix}\sinc (2x)+ 4e^{16\pi ix}\sinc (4x)\rp,\\
\phi_3(x)=&e^{-\pi ix^2\cot\theta} ( 2.3473e^{\pi ix}\sinc (x)-1.6304 e^{4\pi ix}\sinc (2x)-6.1925 e^{9\pi ix}\sinc (3x)\\
&+6.1284 e^{16\pi ix}\sinc (4x)).
%\phi_4(x)=&e^{-\pi ix^2\cot\theta} ( -3.5321e^{\pi ix}\sinc (x)+10.8229 e^{4\pi ix}\sinc (2x)-14.2763 e^{9\pi ix}\sinc (3x)\\
%&+ 7.5175e^{16\pi ix}\sinc (4x)).
\end{align*}
%Similarly,
%\[
%\phi_2(x)=e^{-\pi ix^2\cot\theta}\lp -1.56e^{\pi ix}\sinc x-1.37e^{4\pi ix}\sinc(2x)+3.75e^{9\pi ix}\sinc(3x)\rp.
%\]

\begin{figure}[h!]
\begin{center}
\includegraphics[width=4cm, height= 3cm]{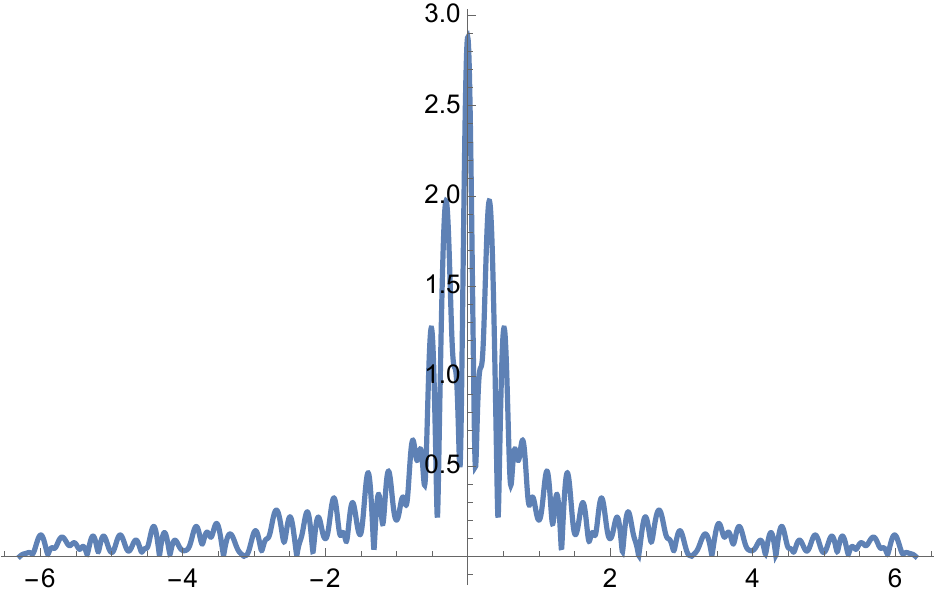}\hspace*{0.5cm}
\includegraphics[width=4cm, height= 3cm]{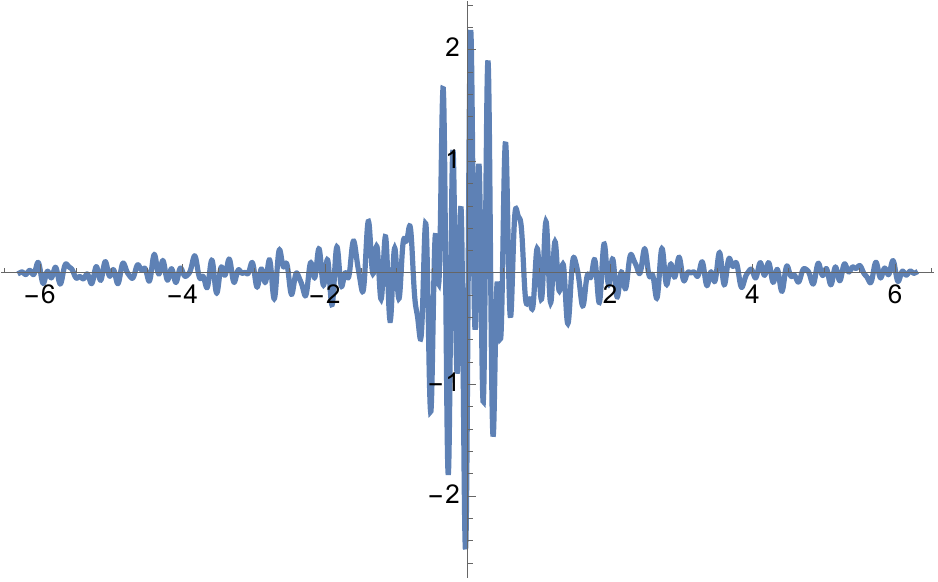}\hspace*{0.5cm}
\includegraphics[width=4cm, height= 3cm]{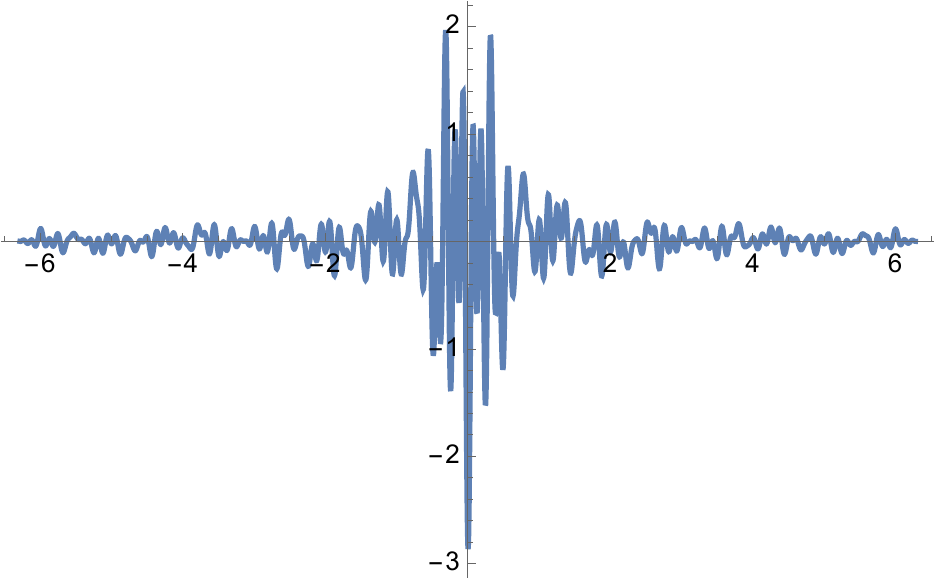}\vspace*{0.5cm}
\includegraphics[width=4cm, height= 3cm]{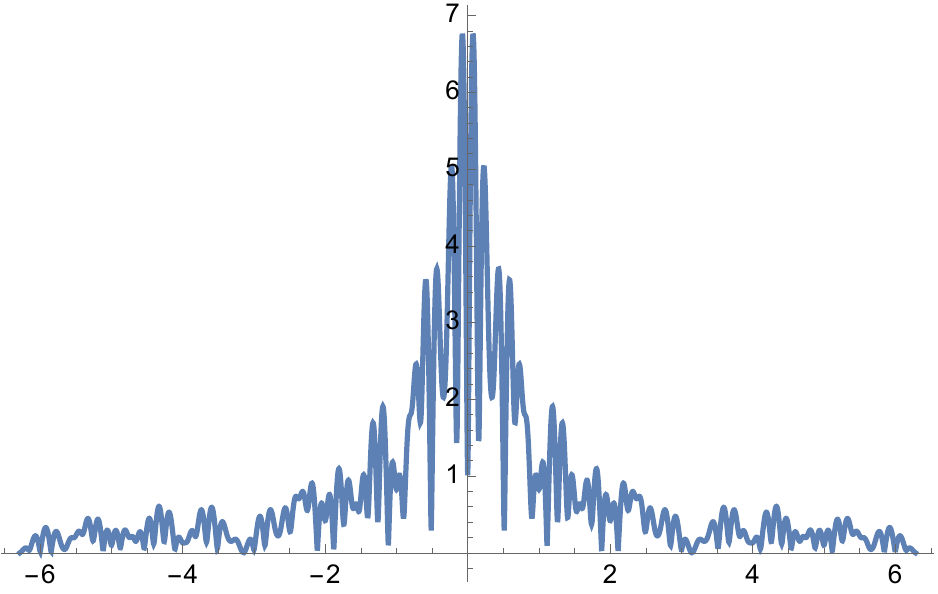}\hspace*{0.5cm}
\includegraphics[width=4cm, height= 3cm]{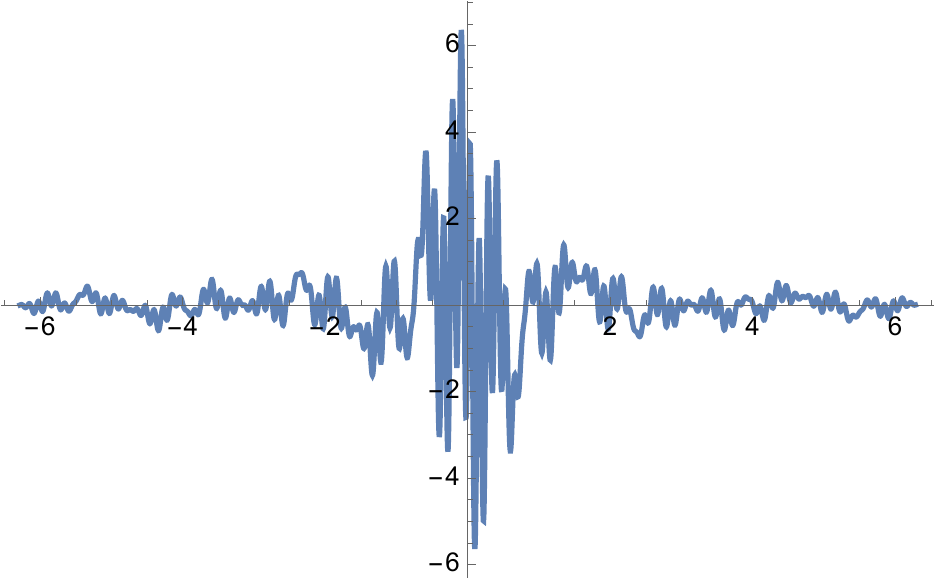}\hspace*{0.5cm}
\includegraphics[width=4cm, height= 3cm]{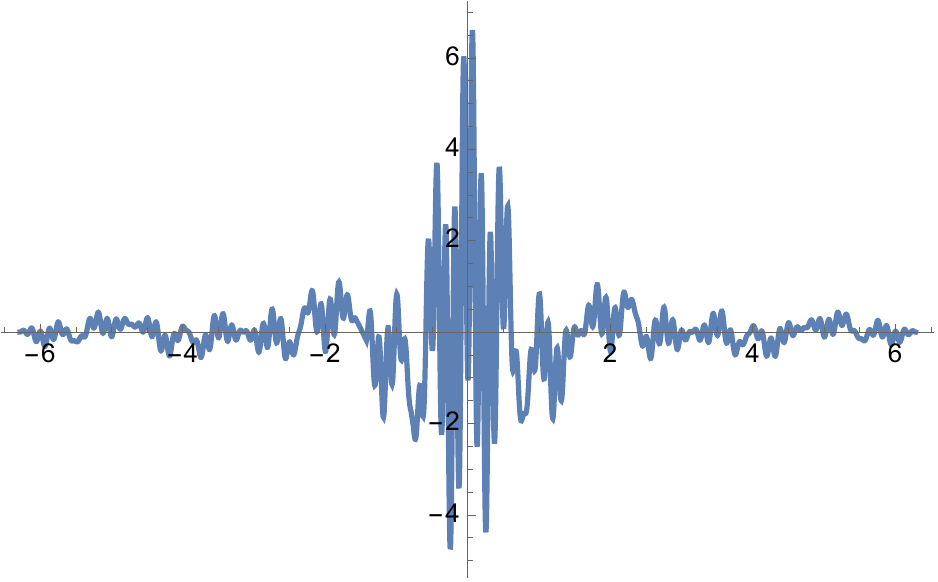}\vspace*{0.5cm}
\includegraphics[width=4cm, height= 3cm]{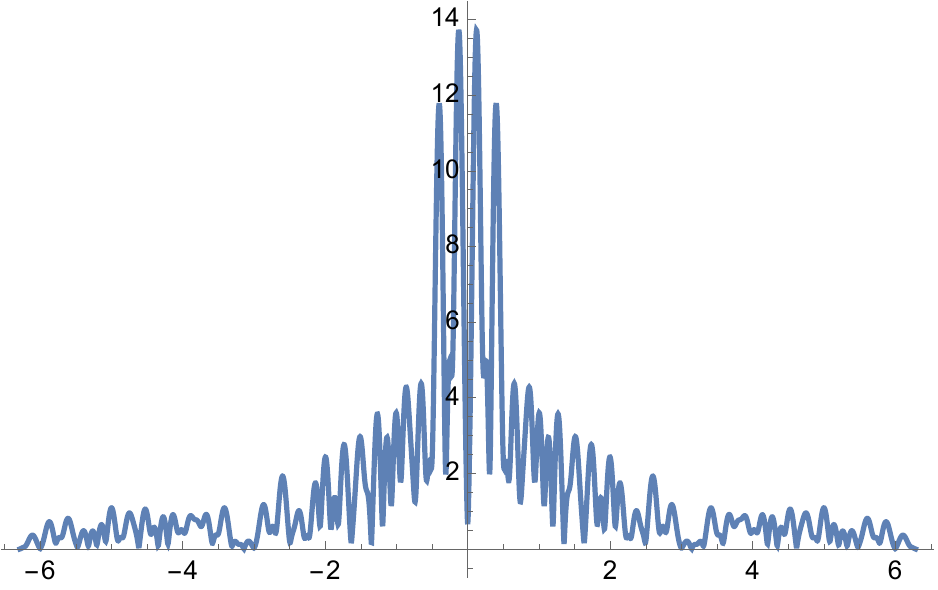}\hspace*{0.5cm}
\includegraphics[width=4cm, height= 3cm]{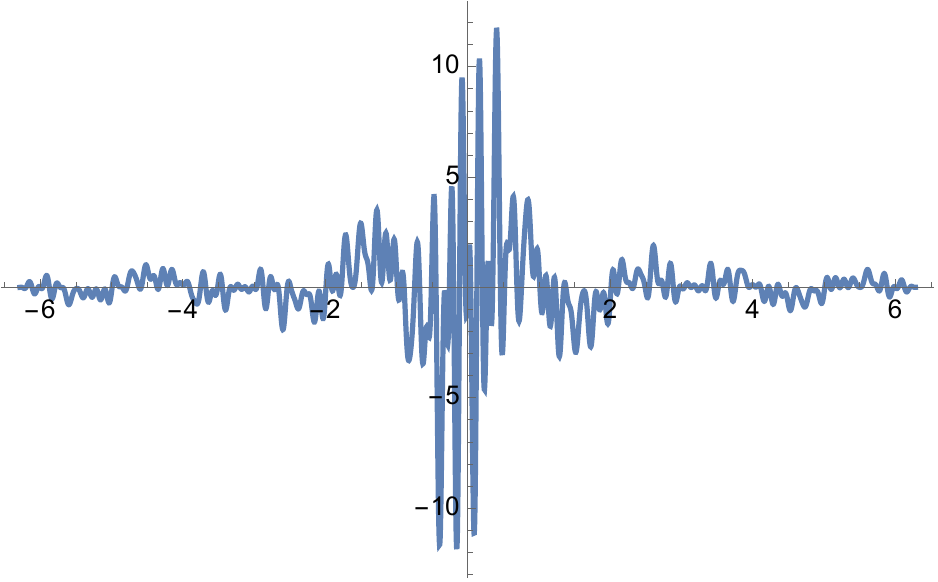}\hspace*{0.5cm}
\includegraphics[width=4cm, height= 3cm]{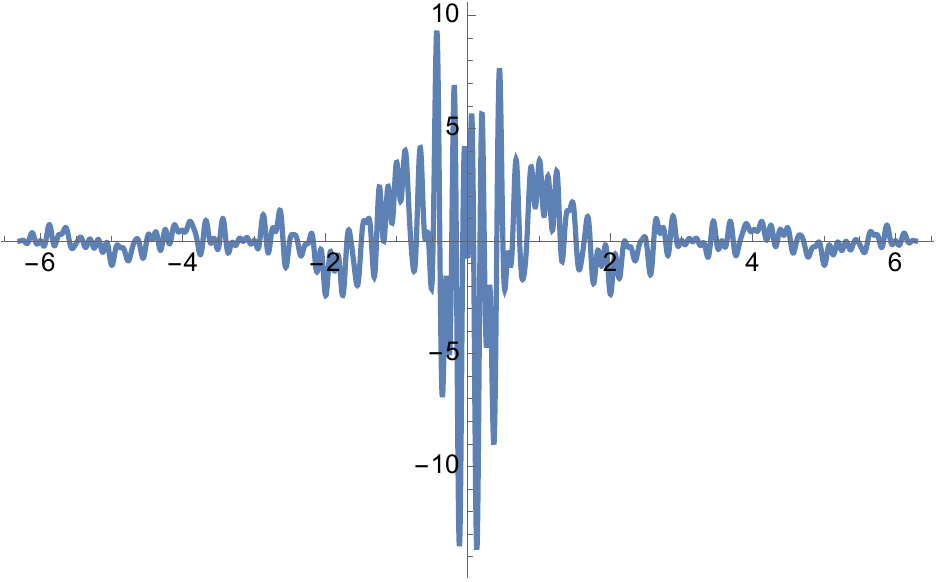}
\caption{The absolute value and the real and imaginary parts of $\phi_1$ (top), $\phi_2$ (middle), and  $\phi_3$  (bottom) for $\theta = \pi/3$.}
\end{center}
\end{figure}

(ii) Let $n=2$. For $m\in\N$, consider
\[
f_m(x, y)=m^2e^{-\pi i(x^2+y^2)\cot\theta}e^{\pi im^2(x+y)}(\sinc (mx))(\sinc (my)),~~(x,y)\in\R^2.
\]
Then $(\ch f)~\widehat{}~(\xi, \omega)=\bigchi_{[0,m)}(\xi)\bigchi_{[0,m)}(\omega)$. Further, by a similar calculation as in the one dimensional case, we obtain $[G^\theta_{\mathcal{F}_4}(\xi,\omega)]_{i,j}=\lp\min \{i,j\}\rp^2$.
%Let $G^\theta_{\mathcal{F}_m}(\omega)$ be the Gramian associated with $\fl_m=\{f_1,f_2,\cdots, f_m\}$. Then
%\begin{align*}
%[G^\theta_{\mathcal{F}_m}(\omega)]_{i,j}&=\sum_{k\in\Z}(\ch f_i)~\widehat{}~(\omega\csc\theta+k)(\ch f_j)~\widehat{}~(\omega\csc\theta+k)\\
%&=\sum_{k\in\Z}\bigchi_{[0,i|\sin\theta|)}(\omega+k\sin\theta)\bigchi_{[0,j|\sin\theta|)}(\omega+k\sin\theta)\\
%&=\min\{i,j\}.
%\end{align*}

\begin{figure}[h!]
\begin{center}
\includegraphics[width=4cm, height= 3.5cm]{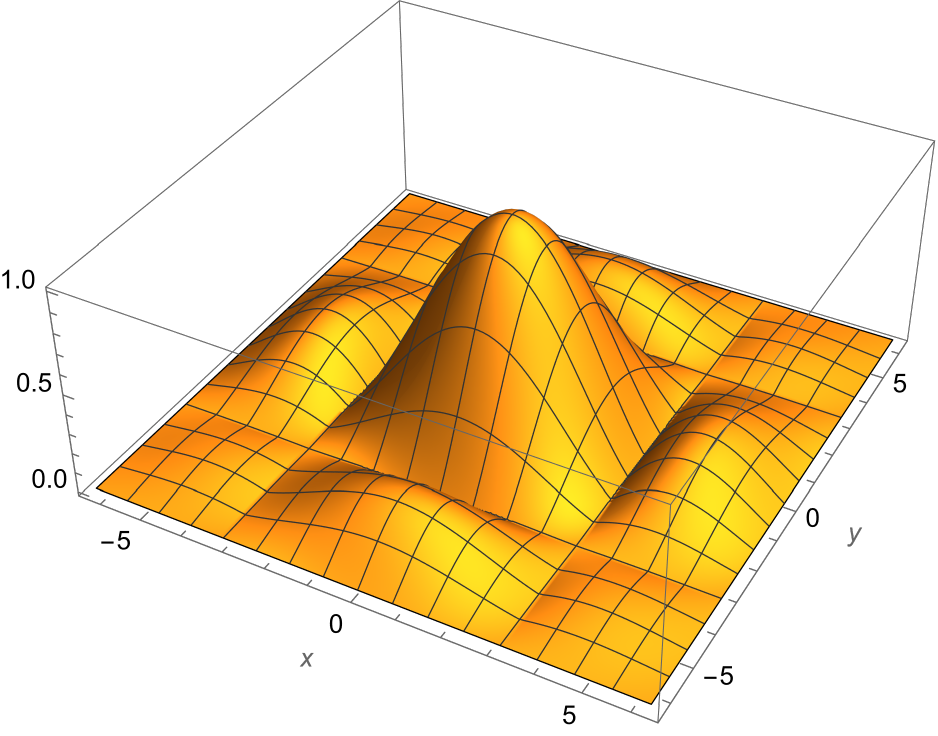}\quad
\includegraphics[width=4cm, height= 3.5cm]{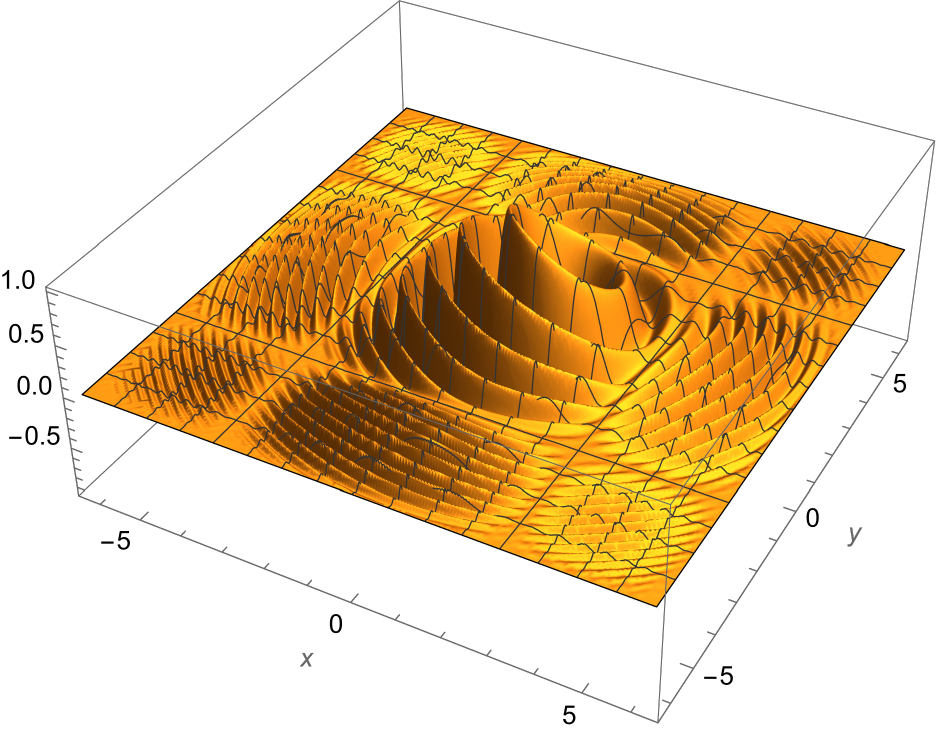}\quad
\includegraphics[width=4cm, height= 3.5cm]{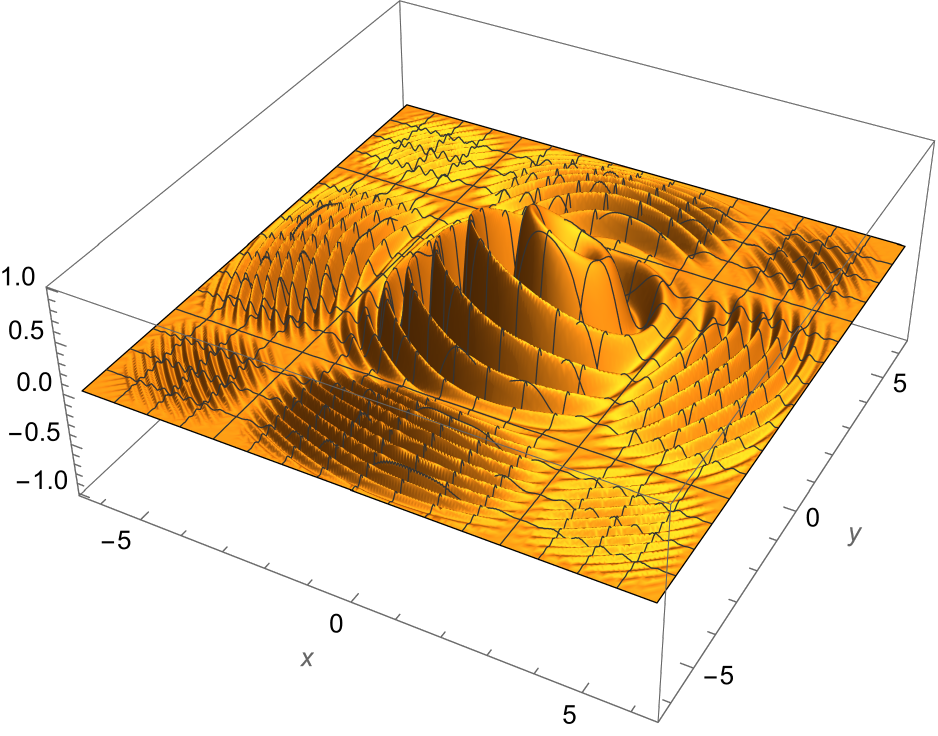}\\ \vspace*{0.5cm}\includegraphics[width=4.cm, height= 3.5cm]{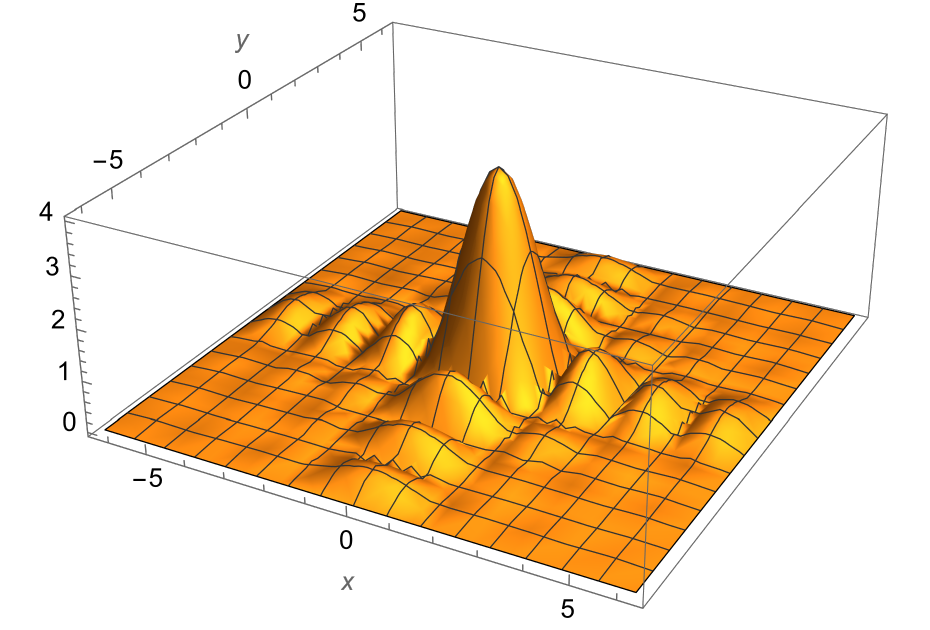}\quad
\includegraphics[width=4.cm, height= 3.5cm]{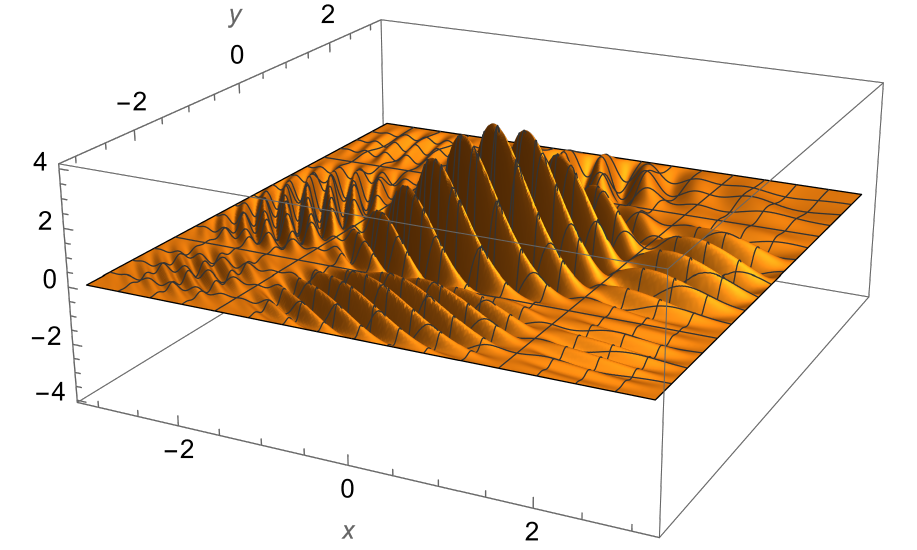}\quad
\includegraphics[width=4.cm, height= 3.5cm]{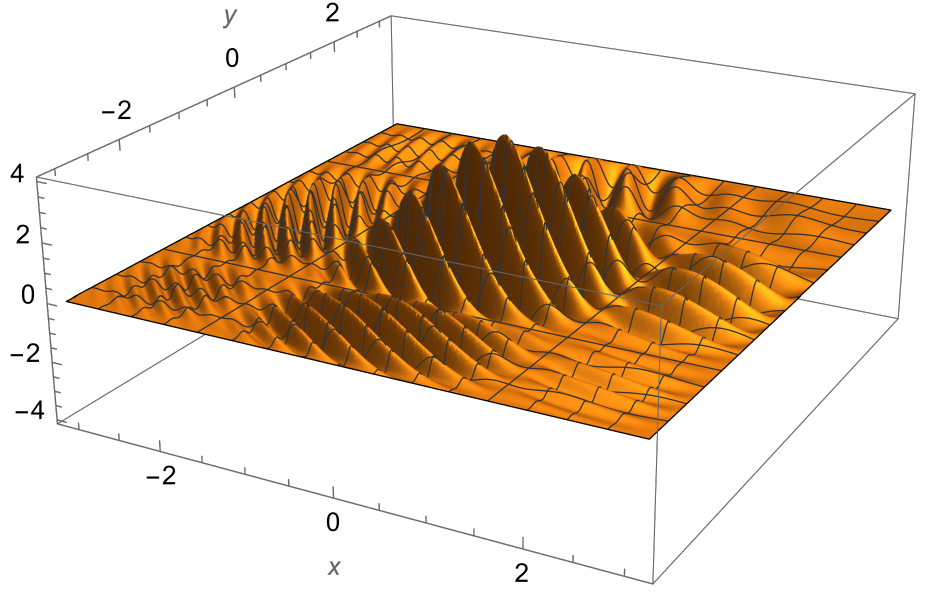}\vspace*{0.5cm}
\includegraphics[width=4.cm, height= 3.5cm]{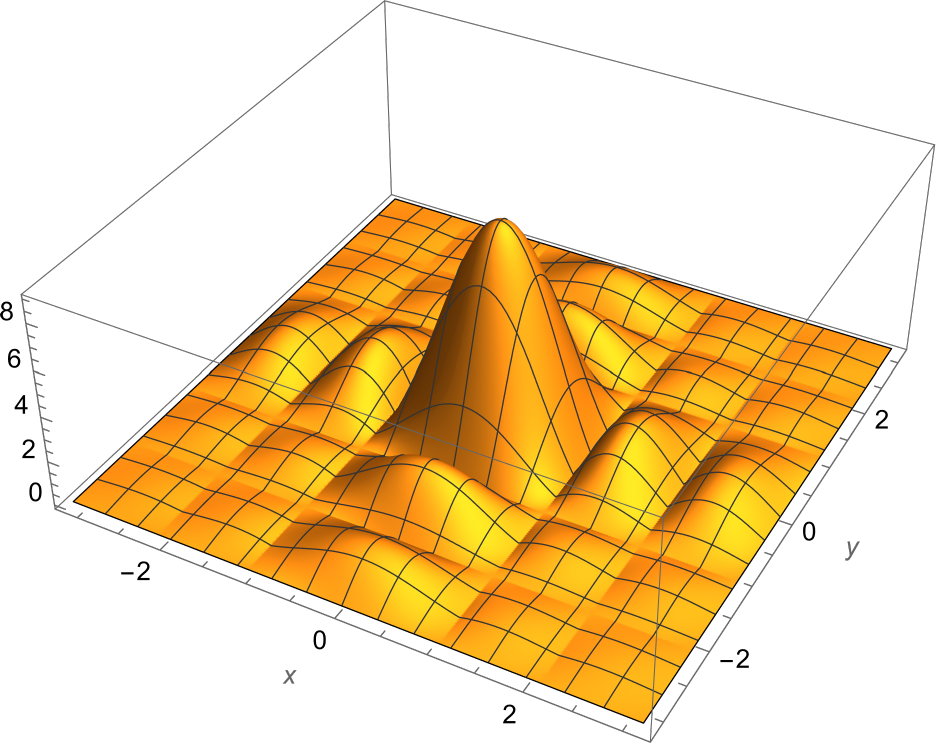}\quad
\includegraphics[width=4.cm, height= 3.5cm]{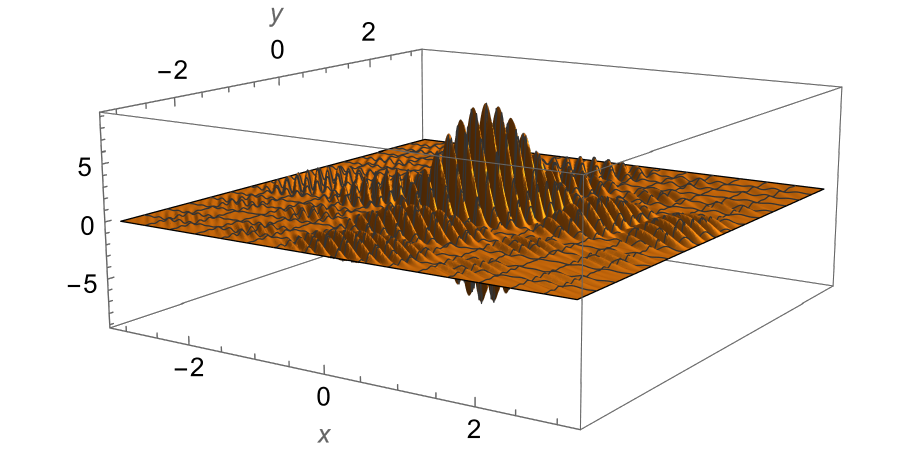}\quad
\includegraphics[width=4.cm, height= 3.5cm]{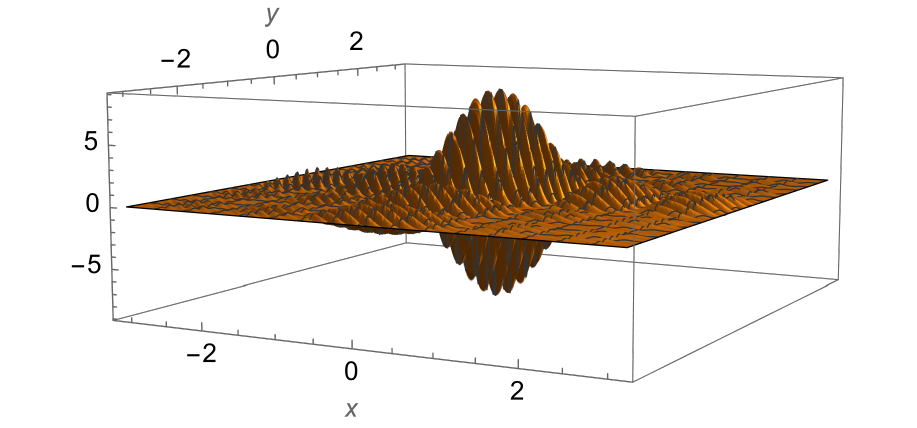}
\caption{The absolute value and the real and imaginary parts of the bivariate functions $f_i$, $i = 1,2,3$ (top to bottom) for $\theta = \pi/3$.}
\end{center}
\end{figure}

Thus, for $m=4$, the Gramian matrix is given by
\[
G^\theta_{\mathcal{F}_4}(\xi,\omega)=
\begin{pmatrix}
1 & 1 & 1 & 1\\
1 & 4 & 4 & 4\\
1 & 4 & 9 & 9\\
1 & 4 & 9 & 16
\end{pmatrix}.
\]
The numerical values of the eigenvalues of $G^\theta_{\mathcal{F}_4}(\xi,\omega)$ are given by
\[
\lambda_1 \approx  23.8417, \quad\lambda_2 \approx 3.76815,\quad\lambda_3 \approx 1.70448,\quad\text{and}\quad\lambda_4 \approx 0.6857,
\]
and the associated eigenvectors by
\begin{align*}
& y_1 \approx (0.0899642, 0.348537, 0.706397, 1)^\top, \quad y_2 \approx (-0.326646, -1.04653, -0.857678, 1)^\top,\\
& y_3 \approx (1.37213, 3.07346, -3.10683, 1)^\top,\quad\text{and}\quad y_4 \approx (-135.024, 50.6467, -9.20855, 1)^\top.
\end{align*}
Then, using these eigenvalues and eigenvectors, we obtain $\phi_1$, $\phi_2$, $\phi_3$. In this case,
\[
\phi_j(x,y)=\frac{y_{j1}}{\sqrt{\lambda_j}}f_1(x,y)+\frac{y_{j2}}{\sqrt{\lambda_j}}f_2(x,y)+\frac{y_{j3}}{\sqrt{\lambda_j}}f_3(x,y)+\frac{y_{j4}}{\sqrt{\lambda_j}}f_4(x,y),
\quad j=1,2,3,\]
where $\lambda_j$ is an eigenvector of $G^\theta_{\mathcal{F}_4}(\xi,\omega)$ with eigenvalue $(y_{j1}, y_{j2}, y_{j3}, y_{j4})^\top$. Thus
\begin{align*}
\phi_1(x,y)=& e^{-\pi i(x^2+y^2)\cot\theta} \big(0.0184e^{\pi i(x+y)}(\sinc (x))(\sinc (y))+0.2855e^{4\pi i(x+y)}(\sinc (2x))(\sinc (2y))\\
&+1.3020e^{9\pi i(x+y)}(\sinc (3x))(\sinc (3y))+3.2768e^{16\pi i(x+y)}(\sinc (4x))(\sinc (4y))\big),\\
\phi_2(x,y)=& -e^{-\pi i(x^2+y^2)\cot\theta}\big(0.1683e^{\pi i(x+y)}(\sinc (x))(\sinc (y))+2.1565e^{4\pi i(x+y)}(\sinc (2x))(\sinc (2y))\\
&+3.9765e^{9\pi i(x+y)}(\sinc (3x))(\sinc (3y))-8.2424e^{16\pi i(x+y)}(\sinc (4x))(\sinc (4y))\big),\\
\phi_3(x,y)=& e^{-\pi i(x^2+y^2)\cot\theta}\big(1.0510e^{\pi i(x+y)}(\sinc (x))(\sinc (y))+9.4166e^{4\pi i(x+y)}(\sinc (2x))(\sinc (2y))\\
&+21.4173e^{9\pi i(x+y)}(\sinc (3x))(\sinc (3y))+12.2553e^{16\pi i(x+y)}(\sinc (4x))(\sinc (4y))\big).
\end{align*}

The absolute value, the real and imaginary part of the functions $\phi_j$, $j=1,2,3$, (top to bottom) are displayed in Figure \ref{fig9}.

\begin{figure}[h!]
\begin{center}
\includegraphics[width=4.5cm, height= 4cm]{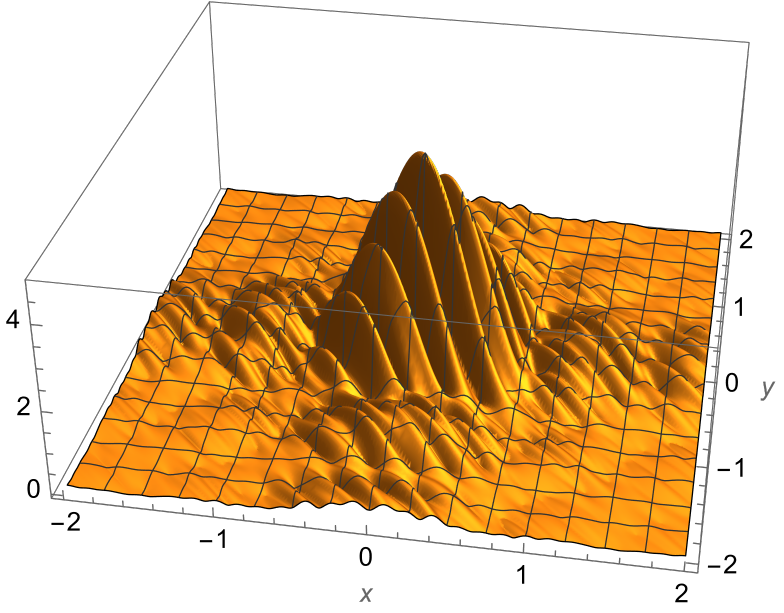}\quad
\includegraphics[width=4.5cm, height= 4cm]{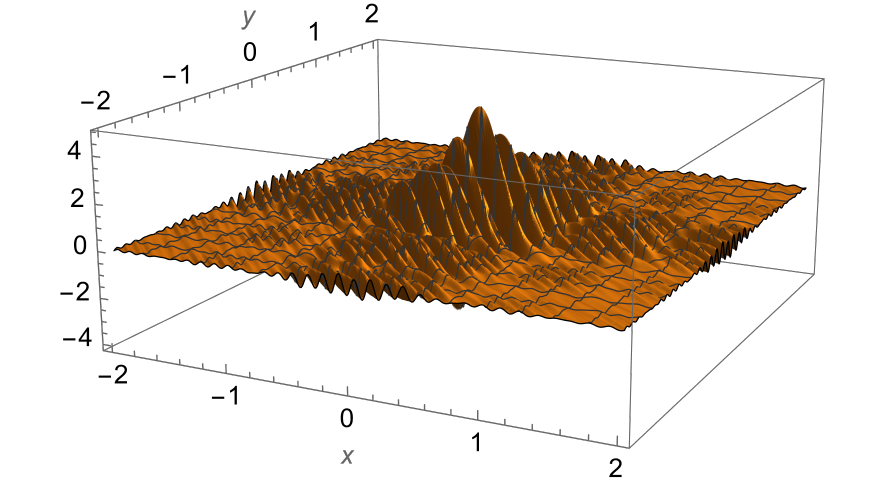}\quad
\includegraphics[width=4.5cm, height= 4cm]{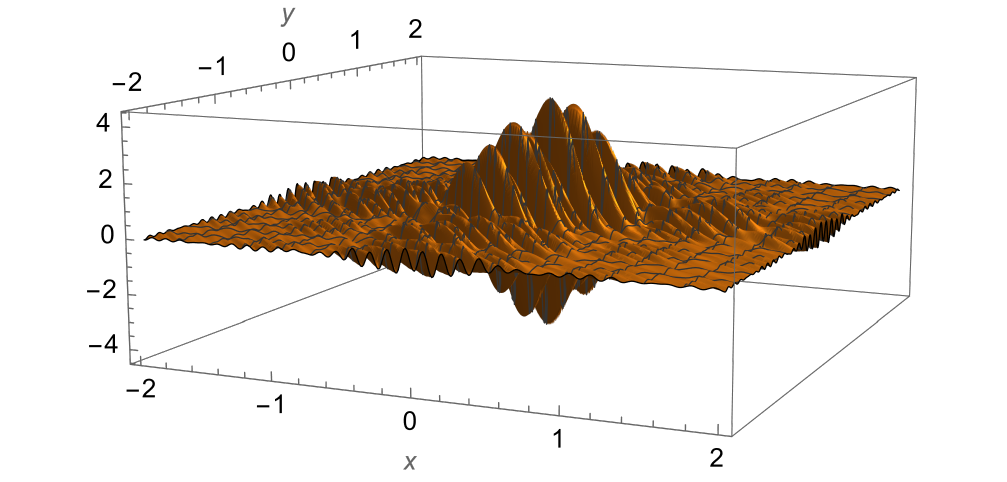}\vspace*{0.5cm}
\includegraphics[width=4.5cm, height= 4cm]{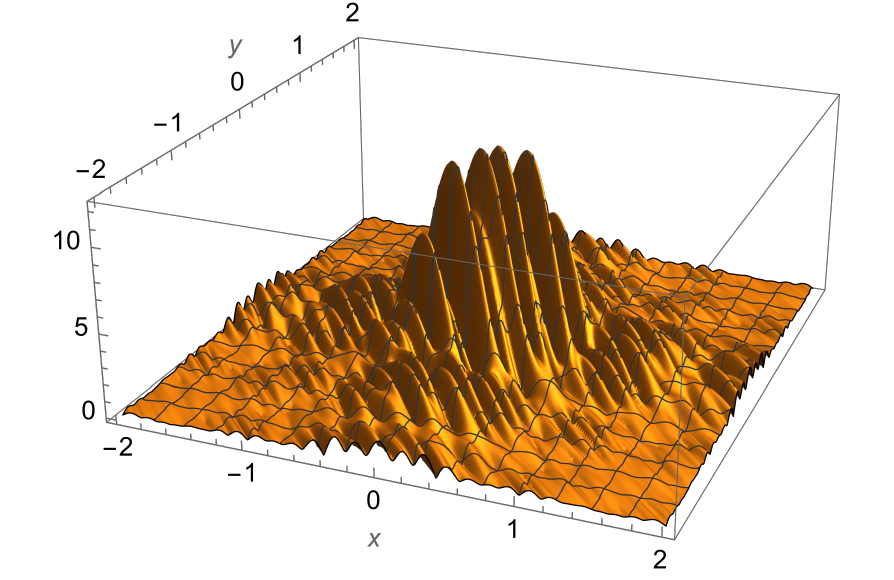}\quad
\includegraphics[width=4.5cm, height= 4cm]{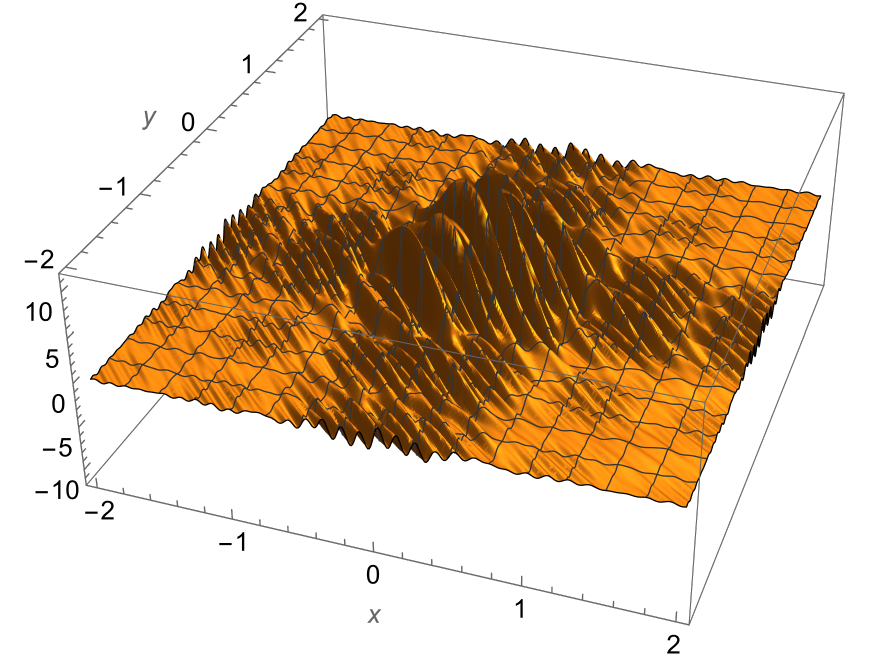}\quad
\includegraphics[width=4.5cm, height= 4cm]{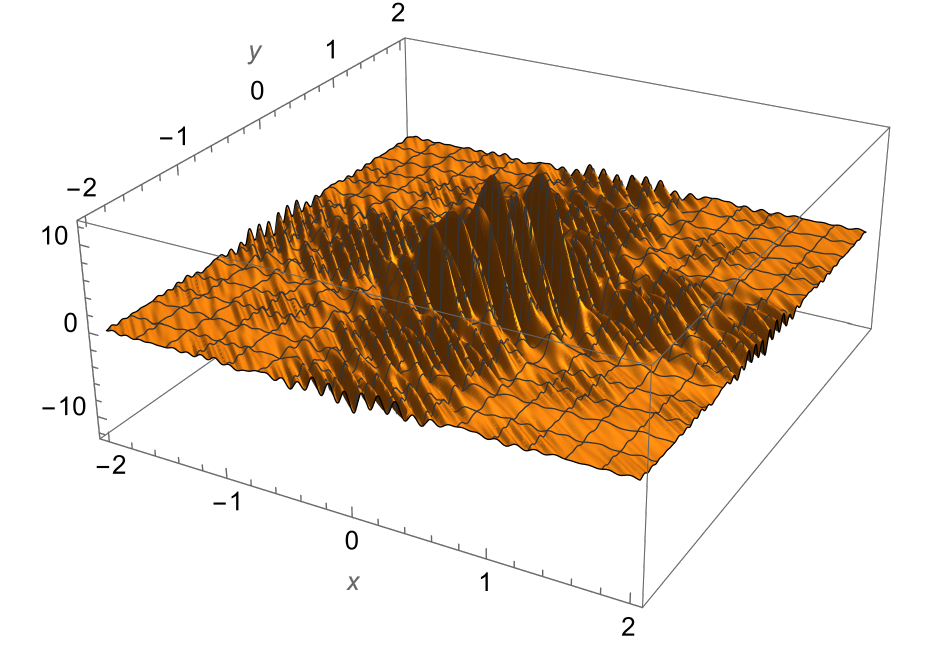}\\\includegraphics[width=4.25cm, height= 4cm]{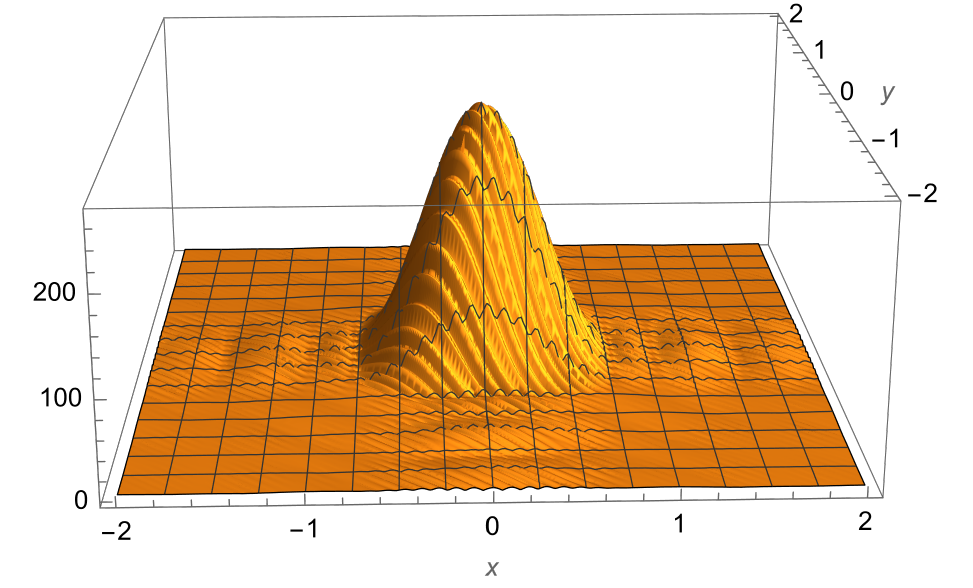}\quad
\includegraphics[width=4.25cm, height= 4cm]{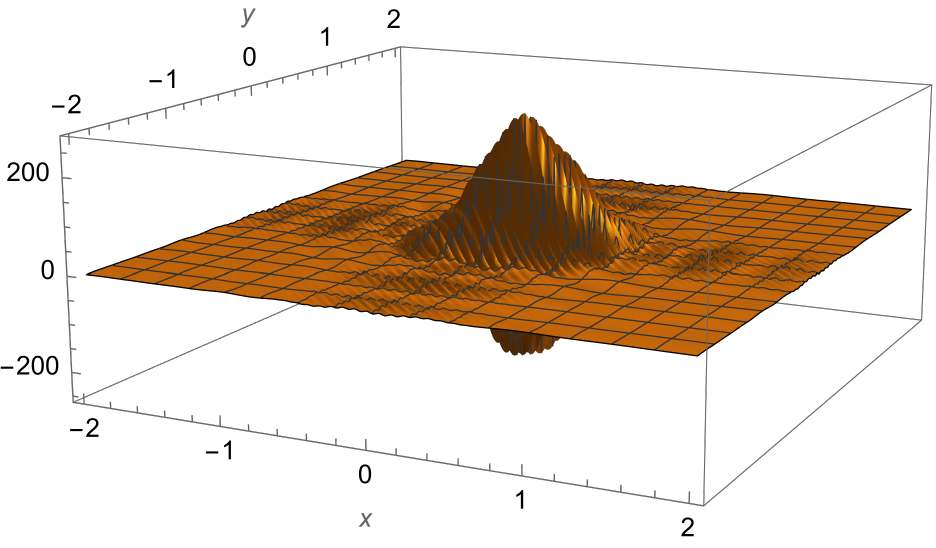}\quad
\includegraphics[width=4.25cm, height= 4cm]{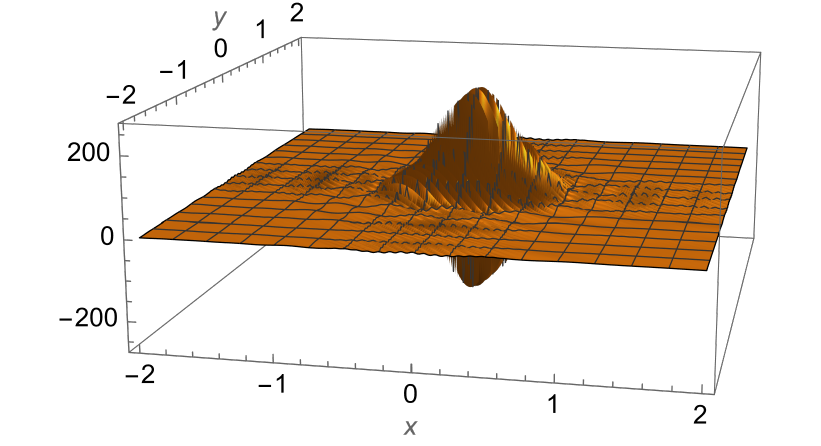}
\caption{The absolute value, the real and imaginary part of the functions $\phi_j$, $j=1,2,3$ (top to bottom) for $\theta = \pi/3$.}\label{fig9}
\end{center}
\end{figure}

Now, using \eqref{eq: error data approx}, we tabulate the error of approximation for different number of generators.
\begin{table}[H]
\begin{tabular}{|c|c|}
\hline
Number of generators & Error\\
\hline
1 & 6.1583$|\sin\theta|^2$\\
\hline
2 & 2.3902$|\sin\theta|^2$\\
\hline
3 & 0.6857$|\sin\theta|^2$\\
\hline
\end{tabular}
\caption{Error in data approximation}
\end{table}
\end{example}

\subsection{Data approximation using the space of bandlimited functions in the FrFT domain}
Let $\Omega\subset\R^n$. Then, using \eqref{eq: preli translation modulation}, it is clear that the subspace 
\[
V=\lc f\in L^2(\R^n):\fft(f)(\omega)=0,~~\text{ a.e. }\omega\in \R^n\setminus \Omega\rc
\] 
is invariant under all $\theta$-translations. Further, using Wiener's theorem associated with the FrFT, we obtain the following.

\begin{propn}
    Let $V\subset L^2(\R^n)$ be a $\theta$-translation invariant subspace. Then, there exists a measurable $\Omega\subset \R^n$ such that
    \[
    V=\lc f\in L^2(\R^n):\fft(f)(\omega)=0,~~\text{ a.e. }\omega\in \R^n\setminus \Omega\rc.
    \]
\end{propn}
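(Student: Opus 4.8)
The plan is to conjugate away the chirp and reduce to the classical Wiener theorem on translation‑invariant subspaces of $\ltn$. Throughout I take $V$ to be closed: if it is not, one replaces $V$ by its closure, which is again $\theta$-translation invariant since each $T_s^\theta$ is bounded on $\ltn$, and the subspace on the right-hand side is automatically closed, so nothing is lost.

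First I would introduce $W:=\ch(V)$, where $\ch$ is the chirp modulation. Since $\ch$ is multiplication by the unimodular function $e^{\pi i t^2\cot\theta}$, it is unitary on $\ltn$, hence $W$ is a closed subspace. The key structural step is to show that $W$ is invariant under the \emph{ordinary} translations $T_s$, $s\in\R^n$. This follows from the intertwining relation \eqref{eq:T_xT_x^A}, which gives $\ch T_s^\theta=e^{\pi i s^2\cot\theta}\,T_s\,\ch$. Indeed, for $g=\ch f\in W$ with $f\in V$,
\[
T_s g=T_s\ch f=e^{-\pi i s^2\cot\theta}\,\ch\!\left(T_s^\theta f\right)\in\ch(V)=W,
\]
because $T_s^\theta f\in V$ and $e^{-\pi i s^2\cot\theta}$ is a nonzero scalar. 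As $s\in\R^n$ is arbitrary, $W$ is a classical translation-invariant subspace of $\ltn$.

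Next I would invoke the classical Wiener theorem: a closed translation-invariant subspace $W\subset\ltn$ is precisely the set of functions whose (ordinary) Fourier transform vanishes off a fixed measurable set, i.e.\ there is a measurable $E\subset\R^n$ with $W=\{g\in\ltn:\widehat{g}=0 \text{ a.e.\ on }\R^n\setminus E\}$; via the chirp conjugation this is exactly the Wiener theorem for the FrFT referred to above. Finally I would transfer this description back through $\fft$. The computation leading to \eqref{eq: fiber defn} expresses the FrFT as a unimodular factor times an ordinary Fourier transform of $\ch f$ at a rescaled argument, namely $\fft(f)(\omega)=c(\omega)\,\widehat{\ch f}(\omega\csc\theta)$ with $|c(\omega)|=1$ for a.e.\ $\omega$. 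Hence $\fft(f)(\omega)=0$ exactly when $\widehat{\ch f}(\omega\csc\theta)=0$, and chaining the equivalences $f\in V\iff\ch f\in W\iff\widehat{\ch f}=0$ a.e.\ off $E$ yields $\fft(f)=0$ a.e.\ off $\Omega$, where $\Omega:=(\sin\theta)E=\{\omega\in\R^n:\omega\csc\theta\in E\}$. Since $\omega\mapsto\omega\csc\theta$ is a linear isomorphism of $\R^n$, the set $\Omega$ is measurable and null sets correspond, so the ``a.e.'' conditions match up, giving the desired $\Omega$.

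The routine but slightly delicate point—the main thing to get right—is the bookkeeping in the last paragraph: one must confirm that the a.e.\ vanishing condition is preserved under the rescaling $\omega\mapsto\omega\csc\theta$ (including a possible reflection coming from the sign convention in $\fft$), and that the unimodular prefactor $c(\omega)$ never forces spurious zeros. Everything else—unitarity of $\ch$, the intertwining \eqref{eq:T_xT_x^A}, and the appeal to Wiener's theorem—is immediate.
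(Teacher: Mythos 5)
Your proof is correct, and it takes a genuinely different route from the paper's. The paper works entirely in the FrFT domain: it applies the Wiener theorem for the FrFT (Theorem 3.7 in \cite{H2}) to each individual $f\in V$ to obtain a support set $\Omega_f$, forms $\Omega=\bigcup_{f\in V}\Omega_f$, and then identifies $V$ by computing $V^\perp$ via Parseval's formula for the FrFT. You instead conjugate the whole subspace by the unitary chirp $\ch$, use the intertwining relation \eqref{eq:T_xT_x^A} to see that $\ch(V)$ is invariant under all ordinary translations, invoke the classical Wiener theorem once for the entire subspace, and pull the resulting set $E$ back through $\fft(f)(\omega)=c(\omega)\,\widehat{\ch f}(\omega\csc\theta)$ with $|c(\omega)|=1$, obtaining $\Omega=(\sin\theta)E$. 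Your reduction buys two things: it needs only the classical $L^2(\R^n)$ Wiener theorem rather than its FrFT analogue, and it sidesteps a point the paper's proof glosses over, namely the measurability of the uncountable union $\bigcup_{f\in V}\Omega_f$ (which would require, e.g., passing to a countable dense subset of $V$); in your argument $\Omega$ is the image of a measurable set under an invertible scaling, so measurability and the matching of a.e.\ conditions are automatic. What the paper's route buys is consistency with its own toolkit -- it reuses the FrFT Wiener theorem and Parseval identity already in play -- and it makes the duality $V^\perp=\lc g\in\ltn:\fft(g)(\omega)=0~\text{ a.e. }\omega\in\Omega\rc$ explicit, which your argument only gets implicitly. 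At bottom the two proofs rest on the same mechanism, since the FrFT Wiener theorem of \cite{H2} is itself a chirp-conjugated form of the classical one; but as written, yours is the more self-contained of the two.
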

\begin{proof}
    Let $V=\ltn$. Then $\Omega=\R^n$. Assume that $V$ is a proper subspace of $\ltn$. Then $V^\perp$ is a non-trivial proper subspace of $\ltn$.\\
    Since, $V$ is a proper subspace of $\ltn$, by Wiener's theorem in connection with the FrFT (see Theorem 3.7 in \cite{H2}), for each $f\in V$, there exists a measurable set $\Omega_f$ having positive measure such that
    \[
    \fft(f)(\omega)=0,~~\text{ a.e. }\omega\in\R^n\setminus\Omega_f.    
    \]
    Let $\Omega=\bigcup\limits_{f\in V}\Omega_f$. Then, $\Omega$ is a measurable subset of $\R^n$ having positive measure.\\
    On the other hand, using Parseval's formula for the FrFT, we can show that
    \[
    V^\perp=\lc g\in L^2(\R^n):\fft(g)(\omega)=0,~~\text{ a. e. }\omega\in\Omega\rc.
    \]
    Since $V^\perp\neq\{0\}$. $\Omega$ is a proper subspace of $\R^n$. This proves our claim.
\end{proof}

In order to study the approximation problem, we use the following setup. Let $\T^{\ell, \theta}$ be the set of all $\theta$-shift invariant spaces generated by at most $\ell$ elements, i.e., $V=\cl_{L^2(\R^n)}{\Span}\{T_k^\theta\phi_j:k\in\Z^n, ~j=1,\cdots,\ell\}$, for some $\phi_1,\cdots,\phi_\ell\in L^2(\R^n)$ such that $\{T_k^\theta\phi_j:k\in\Z^n, ~j=1,\cdots,\ell\}$ forms a Riesz basis for $V$. We obtain the  following characterization of the elements in $\tlt$.

\begin{propn}\label{pro: sec 3 pro}
    A subset $V\in\T^{\ell, \theta}$ if and only if $V=V_\Omega^\theta$ with $\Omega$ is a measurable, fractional $\ell$-multi-tile of $\R^n$.
\end{propn}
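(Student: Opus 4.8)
The plan is to transport the whole equivalence to the fiber side through the isometric isomorphism $\tau^\theta\colon L^2(\R^n)\to L^2\!\lp I,\ell^2(\Z^n)\rp$ and to read the multi-tile condition as a statement about the a.e.\ dimension of the fiber spaces $J^\theta_V(\omega)$. The computational heart is the description of the fibers of a band-limited space. Starting from \eqref{eq: fiber defn}, one has for every $k\in\Z^n$ and a.e.\ $\omega\in I$ the modulus identity $\abs{\tau^\theta(f)(\omega)(k)}=\abs{\fft(f)(\omega+k\sin\theta)}$, since $\tau^\theta(f)(\omega)(k)=\widehat{\ch f}\lp(\omega+k\sin\theta)\csc\theta\rp$ and $\abs{\fft(f)(\nu)}=\abs{\widehat{\ch f}(\nu\csc\theta)}$. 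Because $f\in V_\Omega^\theta$ exactly when $\fft(f)$ vanishes off $\Omega$, the first step is to deduce from this that
\[
J^\theta_{V_\Omega^\theta}(\omega)=\cl_{\ell^2(\Z^n)}\Span\lc e_k:\omega+k\sin\theta\in\Omega\rc,\quad\text{a.e. }\omega\in I,
\]
the coordinate subspace of $\ell^2(\Z^n)$, whose dimension therefore equals $\sum_{k\in\Z^n}\bigchi_\Omega(\omega+k\sin\theta)$. The support constraint gives the inclusion $\subseteq$, while surjectivity of $\tau^\theta$ onto the measurable field of coordinate subspaces gives $\supseteq$.

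For the forward implication, suppose $V=V_\Omega^\theta$ lies in $\tlt$, so that some $\phi_1,\dots,\phi_\ell$ generate it with $\lc T_k^\theta\phi_j\rc$ a Riesz basis. By part~(iv)(b) of the fiber characterization lemma, $\lc\tau^\theta(\phi_j)(\omega):j=1,\dots,\ell\rc$ is a Riesz basis of $J^\theta_{V_\Omega^\theta}(\omega)$ for a.e.\ $\omega$. A Riesz basis of $\ell$ vectors spans an $\ell$-dimensional space, so $\dim J^\theta_{V_\Omega^\theta}(\omega)=\ell$ a.e.; combined with the fiber computation this yields $\sum_{k\in\Z^n}\bigchi_\Omega(\omega+k\sin\theta)=\ell$ a.e., i.e.\ $\Omega$ is a fractional $\ell$-multi-tile.

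For the converse, given a fractional $\ell$-multi-tile $\Omega$ I would apply Lemma~\ref{lemma: partition of tile} with $L=\Z^n\sin\theta$ to split $\Omega=\Omega_1\cup\cdots\cup\Omega_\ell\cup E$ into disjoint pieces, each tiling $\R^n$ at level one, together with a null set $E$. Since $\abs{\Omega}=\ell\abs{\sin\theta}^n<\infty$, each $\bigchi_{\Omega_j}\in L^2(\R^n)$, so I may define $\phi_j\in L^2(\R^n)$ by $\fft(\phi_j)=\bigchi_{\Omega_j}$. Because $\Omega_j$ tiles at level one, for a.e.\ $\omega$ there is exactly one index $k_j(\omega)$ with $\omega+k_j(\omega)\sin\theta\in\Omega_j$, so $\tau^\theta(\phi_j)(\omega)$ is a unimodular multiple of the standard vector $e_{k_j(\omega)}$; disjointness of the $\Omega_j$ forces the indices $k_1(\omega),\dots,k_\ell(\omega)$ to be distinct with union $\lc k:\omega+k\sin\theta\in\Omega\rc$. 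Hence $\lc\tau^\theta(\phi_j)(\omega)\rc_{j=1}^\ell$ is an orthonormal basis of $J^\theta_{V_\Omega^\theta}(\omega)$ a.e., and parts~(iv)(a)--(b) of the fiber lemma upgrade this to the statement that $\lc T_k^\theta\phi_j\rc$ is a Riesz basis of $\cl_{L^2(\R^n)}\Span\lc T_k^\theta\phi_j\rc=V_\Omega^\theta$; thus $V_\Omega^\theta\in\tlt$.

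The main obstacle I anticipate is the first step together with the measurable selection it requires: establishing the coordinate-subspace identity for $J^\theta_{V_\Omega^\theta}(\omega)$ rigorously, and, in the converse, guaranteeing that the selectors $\omega\mapsto k_j(\omega)$ are measurable so that the fiberwise orthonormal bases genuinely assemble into $L^2$ generators $\phi_j$. Both points reduce, via the chirp-modulation intertwining $\ch(T^\theta_s f)=e^{\pi is^2\cot\theta}T_s(\ch f)$ of \eqref{eq:T_xT_x^A}, which turns $\theta$-translation into ordinary translation, to the classical correspondence between band-limited shift-invariant spaces and multi-tiles; the measurability of the selection is handled exactly as in the proof of Lemma~\ref{lemma: partition of tile}.
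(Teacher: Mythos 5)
Your proof is correct and follows essentially the same path as the paper: the forward direction is the identical argument (establish $J^\theta_{V_\Omega^\theta}(\omega)\cong\ell^2\lp\lc k\in\Z^n:\omega+k\sin\theta\in\Omega\rc\rp$, then use the fiber Riesz-basis criterion (iv)(b) to force $\#\lc k\in\Z^n:\omega+k\sin\theta\in\Omega\rc=\ell$ a.e.), and your converse uses the same partition of $\Omega$ via Lemma \ref{lemma: partition of tile} and the same generators $\fft(\phi_j)=\bigchi_{\Omega_j}$. The only divergence is how the converse is finished: you verify that the fibers $\lc\tau^\theta(\phi_j)(\omega):j=1,\dots,\ell\rc$ form orthonormal bases and then invoke parts (iv)(a)--(b) of the fiber lemma (which, strictly speaking, is stated only for finitely generated $\theta$-shift invariant spaces, so a one-line orthogonality argument is needed to identify $\cl_{L^2(\R^n)}\Span\lc T_k^\theta\phi_j\rc$ with $V_\Omega^\theta$), whereas the paper concludes directly by noting that $\lc e^{\pi ik^2\cot\theta}e^{-2\pi ik\omega\csc\theta}\bigchi_{\Omega_j}:k\in\Z^n,\,j=1,\dots,\ell\rc$ is an orthonormal basis of $L^2(\Omega)$, thereby never applying the fiber lemma to $V_\Omega^\theta$ before it is known to be finitely generated.
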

\begin{proof}
    Let $V\in\tlt$. Then, $V=V_\Omega^\theta$, for some measurable $\Omega\subset\R^n$. It remains to show that $\Omega$ is a fractional $\ell$-multi-tile of $\R^n$.\\
    First, we observe that
    \[
    J_V^\theta(\omega)\cong \ell^2(O_{\omega,\theta}), ~~\text{ where }O_{\omega,\theta}=\{k\in\Z^n:\omega+k\sin\theta\in\Omega\}.
    \]
    In fact, $J_V^\theta(\omega)\subset\ell^2(O_{\omega,\theta})$ follows from the definition of $J_V^\theta(\omega)$. In order to see the reverse inclusion, let $E_k=(I+k\sin\theta)\cap\Omega$. Then, the $E_k$'s are mutually disjoint, $\Omega=\bigcup\limits_{k\in\Z^n}E_k$ and $k\in O_{\omega,\theta}$ if and only if $\omega+k\sin\theta\in E_k$.\\
    Let $\{a_k\}\in\ell^2(O_{\omega,\theta})$ and define 
    \[
    G_\omega:=\sum_{k\in O_{\omega,\theta}}a_k\bigchi_{E_k}.
    \]
    Then, it is easy to see that $G_\omega\in \ltn$. Let us choose $g\in\ltn$ such that
    \[
    \nch \fft(g)=G_\omega.
    \]
    Then, $\tau^\theta(g)=a_k$. This establishes the required isomorphism.\\
    As, $\{T^\theta_k\phi_j:k\in\Z^n,j=1,\cdots,\ell\}$ forms a Riesz basis for $V$, $\{\tau^\theta(\phi_j)(\omega):j=1,\cdots,\ell\}$ forms a Riesz basis for $J_V^\theta(\omega)$. Thus, $\dim[J_V^\theta(\omega)]=\ell$. But $\dim[J_V^\theta(\omega)]=\#O_{\omega,\theta}=\ell$. This shows that $\Omega$ is a fractional $\ell$-multi-tile $\R^n$.\\
    For the converse, assume that $\Omega$ is a fractional $\ell$-multi-tile of $\R^n$. Let $V:=V^\theta_\Omega$. So $V$ is $\theta$-translation invariant. Further, using Lemma \ref{lemma: partition of tile}, 
    \[
    \Omega=\Omega_1\cup\cdots\cup\Omega_\ell,
    \]
    up to a set of measure zero, where the $\Omega_j$'s are mutually disjoint and each of them tiles $\R^n$, when translated by $\Z^n\sin\theta$. Choose $\phi_j\in\ltn$ such that $\fft(\phi_j)=\bigchi_{\Omega_j}$, $j=1,\cdots,\ell$. Since the collection $\{e^{\pi ik^2\cot\theta} e^{-2\pi ik\omega\csc\theta}\bigchi_{\Omega_j}:k\in\Z^n\}$ is an orthonormal basis for $L^2(\Omega_j)$, $\{e^{\pi ik^2\cot\theta} e^{-2\pi ik\omega\csc\theta}\bigchi_{\Omega_j}:k\in\Z^n, j=1,\cdots,\ell\}$ is an orthonormal basis for $L^2(\Omega)$. This implies that $\{T_k^\theta\phi_j:k\in\Z^n,j=1,\cdots,\ell\}$ is an orthonormal basis for $V$, which leads to $V\in\tlt$.
\end{proof}

%Given $\{f_1,\cdots, f_m\}\subset L^2(\R^n)$, our aim is to find $V^*\in \T^{\ell, \theta}$ such that
%\[
%V^*=\displaystyle\text{arg min}_{V\in \T^{\ell,\theta}}\sum_{j=1}^m\|f_j-P_Vf_j\|^2.
%\]
Now, we are ready to prove the main theorem of this subsection. Towards this end, we fix the following notation. We denote the $n$-dimensional cube $[-(N+1/2),N+1/2]^n$ by $C_N$. Let 
\[
M_N^{\ell,\theta}:=\lc \Omega\subset C_N: \Omega\text{ is measurable and fractional $\ell$-multi-tile $\R^n$}\rc
\]
and
\[
\T^{\ell,\theta}_N:=\lc V\in \T^{\ell,\theta}:V=V^\theta_\Omega \text{ with }\Omega\in M^{\ell,\theta}_N\rc.
\]

\begin{theorem}
    Assume that $m,\ell\in\N$ and a set $\mathcal{F}=\{f_1,\cdots,f_m\}\subset L^2(\R^n)$ are given. Then, for each $N\geq \ell$, there exists a $V^*\in \T^{\ell,\theta}_N$ such that
    \[
    V^*=\argmin\limits_{V\in\T^{\ell,\theta}_N}\sum_{j=1}^m\|f_j-P_Vf_j\|^2.
    \]
\end{theorem}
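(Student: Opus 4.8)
The plan is to convert this infinite-dimensional least-squares problem into a fiberwise selection problem via the fiber map $\tau^\theta$, reduce the minimization over multi-tiles to a pointwise ``keep the $\ell$ largest values'' prescription, and verify that the resulting set is measurable and admissible; the only delicate point is this last measurability issue.

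First I would rewrite the objective functional. By Proposition \ref{pro: sec 3 pro}, every $V\in\tlt_N$ has the form $V_\Omega^\theta$ with $\Omega\in\mlt$, and the isomorphism $J_V^\theta(\omega)\cong\ell^2(O_{\omega,\theta})$ established there shows that $P_{J_V^\theta(\omega)}$ merely retains the coordinates $k$ with $\omega+k\sin\theta\in\Omega$. Combining this with the isometry of $\tau^\theta$ and with \eqref{eq: tau theta proj}, and setting
\[
G(\xi):=\sum_{j=1}^m\big|(\ch f_j)~\widehat{}~(\xi\csc\theta)\big|^2,\qquad\xi\in\R^n,
\]
the substitution $\xi=\omega+k\sin\theta$ (using that $I$ tiles $\R^n$ under $\Z^n\sin\theta$ and that $(\omega+k\sin\theta)\csc\theta=\omega\csc\theta+k$) gives
\[
\sum_{j=1}^m\|f_j-P_Vf_j\|^2=\intn\bigchi_{\R^n\setminus\Omega}(\xi)\,G(\xi)\,\diff\xi.
\]
Since $\intn G=\sum_{j=1}^m\|f_j\|^2<\infty$ by \eqref{eq: fiber defn}, the function $G$ is integrable, and minimizing the left-hand side over $\Omega\in\mlt$ is equivalent to maximizing $\int_\Omega G$.

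Next I would solve this maximization fiberwise. For $\omega\in I$ let $K_N(\omega):=\{k\in\Z^n:\omega+k\sin\theta\in C_N\}$, a finite set; the multi-tiling condition forces any admissible $\Omega$ to select exactly $\ell$ of these indices on the fiber over $\omega$. Writing $\int_\Omega G=\int_I\sum_{k\in O_{\omega,\theta}}G(\omega+k\sin\theta)\,\diff\omega$, the integrand is maximized for each $\omega$ by choosing the $\ell$ indices of $K_N(\omega)$ carrying the largest values of $G(\omega+k\sin\theta)$. I would therefore define $\Omega^*$ to consist, on each fiber, of these top-$\ell$ points, and set $V^*:=V_{\Omega^*}^\theta$. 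Optimality is then immediate: for any $\Omega\in\mlt$ and a.e.\ $\omega\in I$ the sum over the $\ell$ indices selected by $\Omega$ is dominated by the sum over the top $\ell$, so integrating over $I$ yields $\int_\Omega G\leq\int_{\Omega^*}G$, i.e. $\sum_{j=1}^m\|f_j-P_{V^*}f_j\|^2\leq\sum_{j=1}^m\|f_j-P_Vf_j\|^2$.

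The main obstacle is to make the construction of $\Omega^*$ rigorous, i.e.\ to check that it is measurable and a genuine element of $\mlt$. I would settle this by a measurable selection argument: each map $\omega\mapsto G(\omega+k\sin\theta)$ is measurable, so for a fixed ordering of $K_N(\omega)$ the set of $\omega$ on which a prescribed $\ell$-subset realizes the $\ell$ largest values is cut out by finitely many inequalities between measurable functions and is therefore measurable. Breaking ties by a fixed lexicographic order on $\Z^n$ produces a measurable partition of $I$, hence a measurable $\Omega^*\subset C_N$ with exactly $\ell$ lattice translates per fiber, so that $\sum_{t\in\Z^n\sin\theta}\bigchi_{\Omega^*}(\omega+t)=\ell$ a.e.\ By Proposition \ref{pro: sec 3 pro} this gives $V^*\in\tlt_N$, completing the proof. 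A subtlety worth checking explicitly is that $K_N(\omega)$ has at least $\ell$ elements for a.e.\ $\omega$ when $N\geq\ell$, which guarantees that $\mlt$ is nonempty and that the top-$\ell$ selection is well defined.
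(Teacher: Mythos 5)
Your proposal is correct and follows essentially the same route as the paper: both use the fiber map and Proposition \ref{pro: sec 3 pro} to convert the problem into a fiberwise top-$\ell$ selection among the lattice translates meeting $C_N$, and both establish measurability of the optimal set via finitely many comparisons of measurable functions. In fact your write-up is slightly more careful than the paper's on two points — you break ties explicitly (lexicographically) so the selected set is a genuine fractional $\ell$-multi-tile, and you index each fiber by $K_N(\omega)=\{k\in\Z^n:\omega+k\sin\theta\in C_N\}$ rather than by $\|k\|_\infty\leq N$ — but these are refinements of the same argument, not a different approach.
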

\begin{proof}
    It is enough to show that there exists a $V^*\in\tlt_N$ such that
    \[
    V^*=\argmax\limits_{V\in\tlt_N}\|P_Vf_j\|^2.
    \]
    Now, using the definition of $\tlt_N$, we have
    \[
    \max_{V\in\tlt_N}\sum_{j=1}^m\|P_Vf_j\|^2=\max_{\Omega\in \mlt}\sum_{j=1}^m\|P_{V_\Omega^\theta}f_j\|^2.
    \]
    Further, for any $\Omega\in\mlt$,
    \begin{align}\label{eq: sec3 thm}
        \sum_{j=1}^m\lnm P_{V_\Omega^\theta}f_j\rnm^2=\sum_{j=1}^m\lnm\tau^\theta\lp P_{V_\Omega^\theta}f_j\rp\rnm_{L^2\lp I,\ell^2(\Z^n)\rp}=\sum_{j=1}^m\int_I \lnm P_{J^\theta_{V^\theta_\omega}}\lp\tau^\theta(f_j)\rp\rnm^2_{\ell^2}\diff\omega,
    \end{align}
    using \eqref{eq: tau theta proj}. Furthermore, if $\Omega\in\mlt$, we know from Proposition \ref{pro: sec 3 pro} that $\dim(J^\theta_{V^\theta_\Omega(\omega)})=\ell$, for a.e. $\omega\in I$. In other words, $J^\theta_{V^\theta_\Omega(\omega)}$ agrees with a subspace of $\ell^2(\Z^n)$ of the sequences supported in $O_{\omega,\theta}$.\\
    Let $k^\Omega(\omega):=\{k_1^\Omega(\omega),\cdots,k_\ell^\Omega(\omega)\}\subset\Z^n$ be such that $\{\delta_{k_j^\Omega}(\omega):j=1,\cdots,\ell\}$ spans $J^\theta_{V^\theta_\Omega(\omega)}$, for a.e. $\omega\in I$. Since, $\Omega\subset C_N$, $\|k_j^\Omega(\omega)\|_\infty\leq N$ for each $j$ and $\omega$. Combining this observation with \eqref{eq: sec3 thm}, we obtain
    \begin{align*}
        \sum_{j=1}^m\|P_{V^\theta_\Omega}f_j\|^2&=\sum_{j=1}^m\int_I\lnm J^\theta_{V^\theta_\Omega(\omega)}\lp\tau^\theta f_j(\omega)\rp\rnm^2_{\ell^2}\diff\omega=\sum_{j=1}^m\int_I\sum_{s=1}^\ell|(\ch f_j)~\widehat{}~(\omega\csc\theta+k_s^\Omega(\omega)|^2\diff\omega.
    \end{align*}
    Thus, we need to maximize the right hand side over all $\Omega\in\mlt$. \\
    Again, from the proof of Proposition \ref{pro: sec 3 pro}, given $\Omega\in\mlt$, for a.e. $\omega\in I$, the set $\Omega$ contains exactly $\ell$ elements from the sequence $\{\omega+k\sin\theta\}$.\\
    Then, for each $\omega\in I$, we pick $\ell$-translations $k_s^*(\omega)$ such that
    \[
    \sum_{j=1}^m\sum_{s=1}^\ell|(\ch f_j)~\widehat{}~(\omega\csc\theta+k^*_s(\omega))|^2=\sum_{j=1}^m\sum_{s=1}^\ell|\fft(f_j)(\omega+k_s^*(\omega)\sin\theta)|^2
    \]
    is maximal over all sets of $\ell$-translations $k=(k_1,\cdots,k_\ell)\subset\Z^n$ with $\|k_j\|_\infty\leq N$.\\
    Let $\mathcal{K}:=\{k=(k_1,\cdots,k_\ell)\subset\Z^n:\|k_j\|_\infty\leq N\}$ and for $k\in\mathcal{K}$, set
    \[
    H_k(\omega):=\sum_{j=1}^m\sum_{s=1}^\ell |(\ch f_j)~\widehat{}~(\omega\csc\theta+k_s(\omega))|^2,
    \]
    and
    \[
    E_k:=\{\omega\in I:H_k(\omega)\geq H_r(\omega), ~~\text{ for all }r\in\mathcal{K}\}.
    \]
    Define $\Omega^*:=\bigcup\limits_{k\in\mathcal{K}}\bigcup\limits_{j=1}^\ell E_k+k_j\sin\theta$. Then, writing $E_k=\bigcap\limits_{r\in\mathcal{K}}F_r^k$, where $F^k_r:=\{\omega\in I:H_k(\omega)\geq H_r(\omega)\}$, we can show that $\Omega^*$ is measurable. Moreover, $\Omega^*\in\mlt$, by construction. Finally, for all $\Omega\in\mlt$,
    \[
    \sum_{j=1}^m\sum_{s=1}^\ell |(\ch f_j)~\widehat{}~(\omega\csc\theta+k_s^\Omega(\omega)|^2)|^2\leq \sum_{j=1}^m\sum_{s=1}^\ell |(\ch f_j)~\widehat{}~(\omega\csc\theta+k_s^*(\omega)|^2)|^2,
    \]
 for a.e. $\omega\in I$. Now, integrating over $I$, we see that $V_{\Omega^*}^\theta$ is the desired space.
\end{proof}

\begin{example}
For the case $n=1$ and $m=4$ (see Example \ref{ex4.6}(i)), we display the graphs of $P_V^N f_j := \sum\limits_{i=1}^3 \sum\limits_{k=-N}^N \inn{f_j}{T^\theta_k \phi_i}\,T^\theta_k \phi_i$ for $j = N = 1$ in Figure \ref{fig10}.
\begin{figure}[h!]
\begin{center}
\includegraphics[width=4.5cm, height= 3cm]{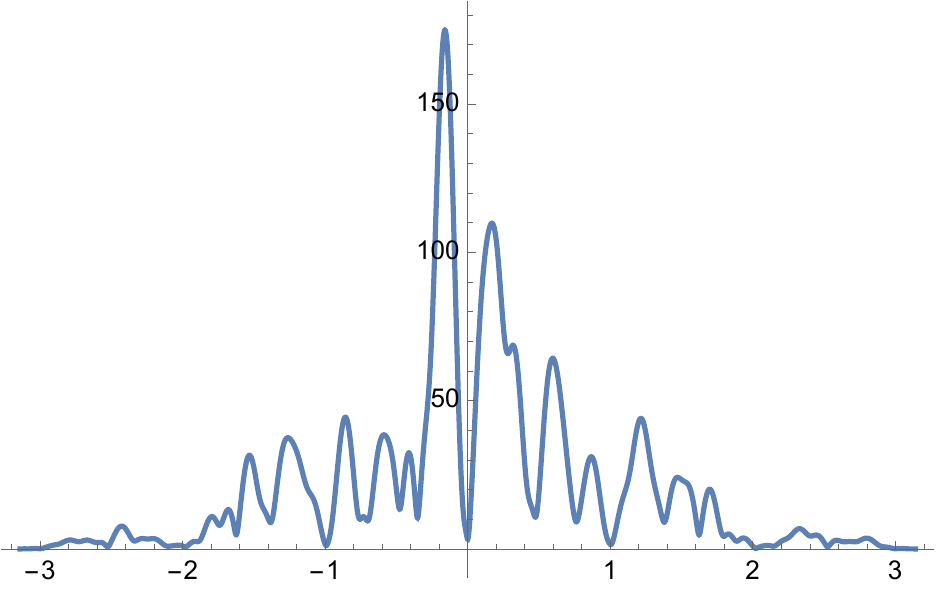}\hspace*{0.5cm}
\includegraphics[width=4.5cm, height= 3cm]{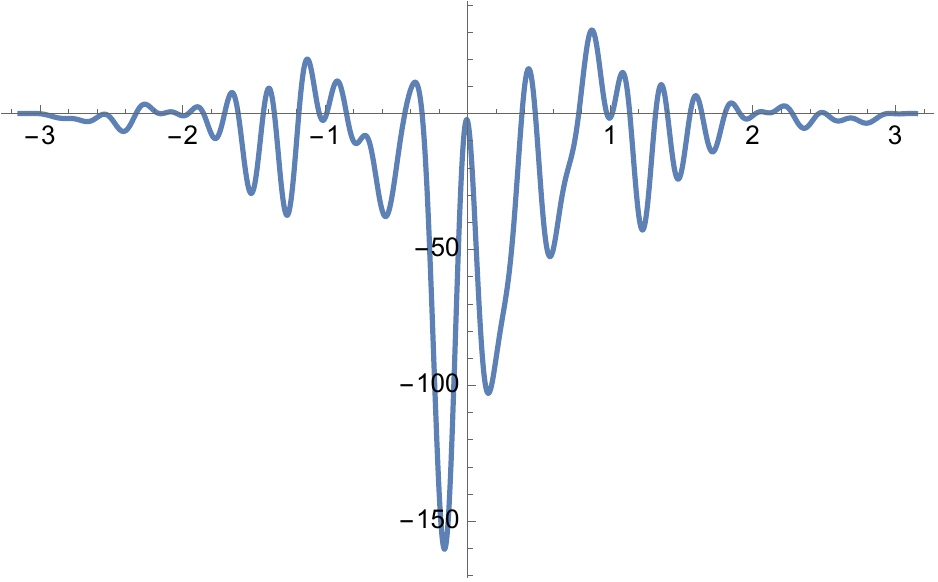}\hspace*{0.5cm}
\includegraphics[width=4.5cm, height= 3cm]{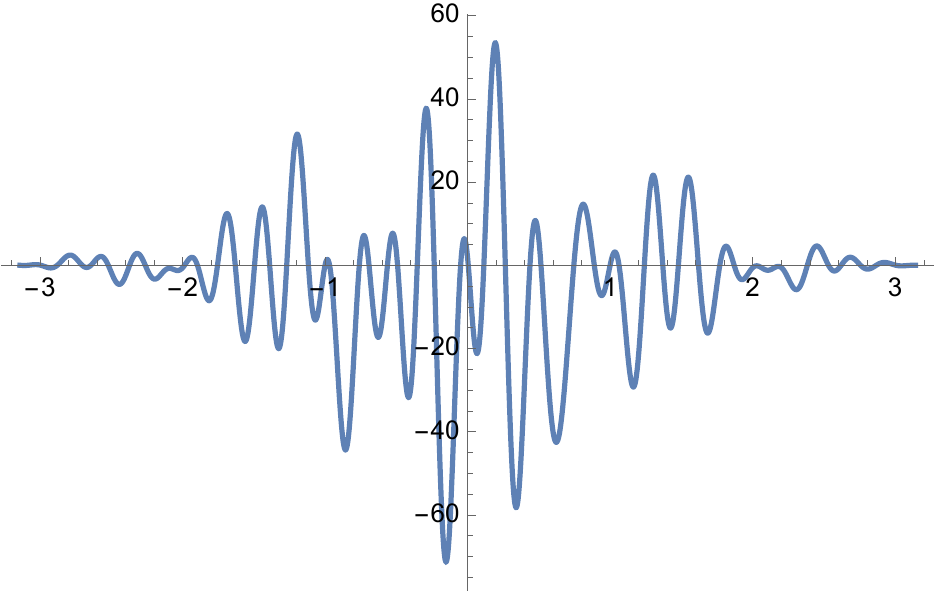}\\
\caption{The absolute value, the real and imaginary part of the function $P_V^N f_1$ for $N=1$ and $\theta = \pi/3$.}\label{fig10}
\end{center}
\end{figure}
\end{example}

\bibliography{ref24A}
\bibliographystyle{amsplain}
\end{document}